\renewcommand{\paragraph}{\roman{paragraph}}
\tikzstyle arrowstyle=[scale=1]
\tikzstyle directed=[postaction={decorate,decoration={markings, mark=at position .65 with {\arrow[arrowstyle]{stealth}}}}]
\tikzstyle reverse directed=[postaction={decorate,decoration={markings, mark=at position .65 with {\arrowreversed[arrowstyle]{stealth};}}}]
\newtheorem{claim}{Claim}[section]
\newtheorem{theorem}{Theorem}[section]
\newtheorem{corollary}[theorem]{Corollary}
\newtheorem{question}[theorem]{Question}
\newtheorem{lemma}[theorem]{Lemma}
\newtheorem{proposition}[theorem]{Proposition}
\newtheorem{Remark}[theorem]{Remark}
\newcommand{\Gaussbinom}{\genfrac{[}{]}{0pt}{}}
\begin{document}

\title{Inverse problems of the Erd\H{o}s-Ko-Rado type theorems for families of vector spaces and permutations}

\author{Xiangliang Kong$^{\text{a}}$, Yuanxiao Xi$^{\text{b}}$, Bingchen Qian$^{\text{b}}$ and Gennian Ge$^{\text{a,}}$\thanks{ Corresponding author. Email address:  gnge@zju.edu.cn.
  The research of G. Ge was supported by the National Natural Science Foundation of China under Grant No. 11971325, National Key Research and Development Program of China under Grant Nos. 2020YFA0712100 and 2018YFA0704703, and Beijing Scholars Program.}\\
  \footnotesize $^{\text{a}}$ School of Mathematical Sciences, Capital Normal University, Beijing, 100048, China\\
  \footnotesize $^{\text{b}}$ School of Mathematical Sciences, Zhejiang University, Hangzhou 310027, Zhejiang, China}
\date{}
\maketitle

\begin{abstract}
  Ever since the famous Erd\H{o}s-Ko-Rado theorem initiated the study of intersecting families of subsets, extremal problems regarding intersecting properties of families of various combinatorial objects have been extensively investigated. Among them, studies about families of subsets, vector spaces and permutations are of particular concerns.

  Recently, the authors proposed a new quantitative intersection problem for families of subsets: For $\mathcal{F}\subseteq {[n]\choose k}$, define its \emph{total intersection number} as $\mathcal{I}(\mathcal{F})=\sum_{F_1,F_2\in \mathcal{F}}|F_1\cap F_2|$. Then, what is the structure of $\mathcal{F}$ when it has the maximal total intersection number among all families in ${[n]\choose k}$ with the same family size? In \cite{KG2020}, the authors studied this problem and characterized extremal structures of families maximizing the total intersection number of given sizes.

  In this paper, we consider the analogues of this problem for families of vector spaces and permutations. For certain ranges of family size, we provide structural characterizations for both families of subspaces and families of permutations having maximal total intersection numbers. To some extent, these results determine the unique structure of the optimal family for some certain values of $|\mathcal{F}|$ and characterize the relation between having maximal total intersection number and being intersecting. Besides, we also show several upper bounds on the total intersection numbers for both families of subspaces and families of permutations of given sizes.

  \medskip
  \noindent{\it Keywords:} total intersection number, vector spaces, permutations.

  \smallskip

  \noindent {{\it AMS subject classifications\/}:  05D05.}

\end{abstract}

\section{Introduction}

For a positive integer $n$, let $[n]=\{1,2,\ldots,n\}$ and ${[n]\choose k}$ denote the collection of all $k$-element subsets of $[n]$. A family $\mathcal{F}\subseteq {[n]\choose k}$ is called \emph{intersecting} if any two of its members share at least one common element. The classic Erd\H{o}s-Ko-Rado theorem states that if $n\geq 2k+1$, an intersecting family has size at most ${n-1\choose k-1}$; if the equality holds, the family must be consisted of all $k$-subsets of $[n]$ containing a fixed element.
Inspired by this cornerstone result in extremal set theory, there have been a great deal of extensions and variations. As two major extensions, intersection problems for families of permutations and families of subspaces over a given finite field have drawn lots of attentions in these years.

Let $\mathbb{F}_q$ be the finite field with $q$ elements and $V$ be an $n$-dimensional vector space over $\mathbb{F}_q$. Denote $\Gaussbinom{V}{k}$ as the collection of all $k$-dimensional subspaces of $V$ and for $t\geq 1$, $\mathcal{F}\subseteq \Gaussbinom{V}{k}$ is called \emph{$t$-intersecting} if $\dim(F\cap F')\geq t$ holds for all $F,F'\in \mathcal{F}$. In 1986, using spectra method, Frankl and Wilson \cite{FW1986} proved the following analogue result of Erd\H{o}s-Ko-Rado theorem for $t$-intersecting family of subspaces of $V$. Since then, many other kinds of intersection problems for families of subspaces have been studied, for examples, see \cite{CP2010,Tanaka2006,BBCFMBS2010}.
\begin{theorem}(\cite{FW1986})\label{EKR_subspace}
Let $n\geq 2k$ and $k\geq t>0$ be integers and let $\mathcal{F}\subseteq \Gaussbinom{V}{k}$ be a $t$-intersecting family, then $|\mathcal{F}|\leq \Gaussbinom{n-t}{k-t}_q$. Moreover, when $n\geq 2k+1$, the equality holds if and only if $\mathcal{F}$ is the family of $k$-dim subspaces containing a fixed $t$-dim subspace.
\end{theorem}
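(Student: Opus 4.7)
The strategy is to view a $t$-intersecting family as an independent set in a suitable graph on $\Gaussbinom{V}{k}$ and then invoke the Hoffman spectral bound via the rich algebraic structure of the Grassmann association scheme. Concretely, define the graph $G$ with vertex set $\Gaussbinom{V}{k}$ and edges $\{F,F'\}$ whenever $\dim(F\cap F')<t$; then $t$-intersecting families are exactly independent sets in $G$. Because $G$ is a union of relations from the Grassmann scheme $J_q(n,k)$, its spectrum is determined by the $q$-Eberlein polynomials, whose values encode the eigenvalues of each adjacency matrix on the common eigenspaces $V_0,V_1,\ldots,V_k$ of the scheme.

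First I would write $A_G=\sum_{i=k-t+1}^{k} A_i$, where $A_i$ is the adjacency matrix for the relation $\dim(F\cap F')=k-i$. Using the standard eigenvalues of the Grassmann scheme (equivalently, applying Delsarte's linear programming bound with a suitable polynomial of degree $t$), I would compute the valency $d$ of $G$ and the smallest eigenvalue $\lambda$ of $A_G$, and then obtain
\begin{equation*}
|\mathcal{F}|\ \leq\ \frac{-\lambda}{d-\lambda}\,\Gaussbinom{n}{k}_q.
\end{equation*}
To identify the right-hand side with $\Gaussbinom{n-t}{k-t}_q$, I would exhibit the \emph{canonical star} $\mathcal{S}_T=\{F\in\Gaussbinom{V}{k}:T\subseteq F\}$ for a fixed $t$-dim subspace $T$: it is $t$-intersecting, has size $\Gaussbinom{n-t}{k-t}_q$, and its indicator function lies in $V_0\oplus V_t$, showing that equality in Hoffman's bound is attained and forcing the two sides to coincide.

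For the uniqueness when $n\geq 2k+1$, I would analyze the equality case of the Hoffman inequality. Tightness forces $\mathbf{1}_{\mathcal{F}}$ to lie in $V_0$ plus the $\lambda$-eigenspace of $A_G$. A central fact about the Grassmann scheme is that this $\lambda$-eigenspace coincides with the span of $\{\mathbf{1}_{\mathcal{S}_T}:T\in\Gaussbinom{V}{t}\}$, so $\mathbf{1}_{\mathcal{F}}$ is a rational combination of star-indicators. The hypothesis that $\mathbf{1}_{\mathcal{F}}$ is a $\{0,1\}$-vector, together with the $t$-intersecting property and the strict inequality $n\geq 2k+1$ (which excludes the sporadic \emph{complement-of-a-star} configurations that can arise when $n=2k$), allows me to peel off a single star: if $\mathcal{F}$ involved two distinct stars $\mathcal{S}_T,\mathcal{S}_{T'}$ with $T\neq T'$, one could construct two members of $\mathcal{F}$ whose intersection has dimension less than $t$, a contradiction.

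The main obstacle is the eigenvalue bookkeeping: one must compute $-\lambda/(d-\lambda)$ explicitly in terms of $q$-binomials and confirm the algebraic identity with $\Gaussbinom{n-t}{k-t}_q/\Gaussbinom{n}{k}_q$, which is a genuine calculation with $q$-Eberlein polynomials rather than a trick. The secondary difficulty lies in the uniqueness step, where one must control the tight eigenspace and use the gap condition $n\geq 2k+1$ decisively to eliminate non-star configurations; this is the step where the proof for $n=2k$ really does fail and where extra combinatorial care is required.
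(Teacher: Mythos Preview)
The paper does not prove this theorem at all: it is stated as background with a citation to Frankl--Wilson \cite{FW1986} and is used only as motivation in the introduction. There is therefore no ``paper's own proof'' to compare your proposal against.

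That said, your proposal is essentially the approach of the cited Frankl--Wilson paper itself: view $t$-intersecting families as independent sets in a union of classes of the Grassmann scheme and apply the Hoffman/Delsarte spectral bound, with equality analysis via the eigenspace structure. So your outline is aligned with the original source rather than diverging from it. The main technical points you flag---the explicit eigenvalue computation via $q$-Eberlein polynomials and the uniqueness argument requiring $n\geq 2k+1$---are exactly where the work lies in that proof; your sketch is accurate in spirit, though the claim that the minimal eigenspace of $A_G$ coincides with the span of the $t$-star indicators (equivalently, that the minimum is attained uniquely on $V_t$) needs the condition $n\geq 2k+1$ already at the bound stage, not only for uniqueness, and this is the step that would need the most care in a full write-up.
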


Let $S_n$ be the symmetric group of all permutations of $[n]$ and for $t\geq 1$, a subset $\mathcal{F}\subseteq S_n$ is called \emph{$t$-intersecting} if there exist $t$ distinct integers $i_1,i_2,\ldots,i_t\in[n]$ such that $\sigma(i_j)=\tau(i_j)$ for $j=1,2,\ldots,t$ and $\sigma,\tau\in \mathcal{F}$. Let $\mathcal{C}_{{i_1\rightarrow j_1},\ldots,{i_t\rightarrow j_t}}=\{\sigma\in S_n: \sigma(i_s)=j_s, \text{~for~} s=1,\ldots,t\}$, if $i_1,\ldots,i_t$ are distinct and $j_1,\ldots,j_t$ are distinct, then $\mathcal{C}_{{i_1\rightarrow j_1},\ldots,{i_t\rightarrow j_t}}$ is a coset of the stabilizer of $t$ points, which is referred as a \emph{$t$-coset}. In \cite{DF1977}, Deza and Frankl proved the following theorem for $1$-intersecting family of permutations.
\begin{theorem}(\cite{DF1977})\label{EKR_permutation}
For any positive integer $n$, if $\mathcal{F}\subseteq S_n$ is $1$-intersecting, then $|\mathcal{F}|\leq (n-1)!$.
\end{theorem}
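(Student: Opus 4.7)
The plan is to exhibit a partition of $S_n$ into $(n-1)!$ classes, each of size $n$, inside which any two distinct permutations disagree at every coordinate; once this is in hand the bound follows because a $1$-intersecting family can pick up at most one permutation from each class.

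First I would fix the $n$-cycle $\sigma=(1\ 2\ \cdots\ n)$ and let $H=\langle\sigma\rangle$ be the cyclic subgroup it generates, which has order $n$. The left cosets $\tau H=\{\tau,\tau\sigma,\tau\sigma^{2},\ldots,\tau\sigma^{n-1}\}$ partition $S_n$ into exactly $n!/n=(n-1)!$ blocks of size $n$.

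The key step is to check that, within any single coset $\tau H$, two distinct elements share no common value: if $\tau\sigma^{i}(x)=\tau\sigma^{j}(x)$ for some $x\in[n]$, then $\sigma^{i-j}(x)=x$, and since $\sigma$ is an $n$-cycle the power $\sigma^{i-j}$ has a fixed point only when $i\equiv j\pmod{n}$. Consequently no two distinct members of $\tau H$ are $1$-intersecting.

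From this, a $1$-intersecting family $\mathcal{F}\subseteq S_n$ can meet each of the $(n-1)!$ cosets in at most one permutation, and the bound $|\mathcal{F}|\leq(n-1)!$ follows immediately. I do not foresee a real obstacle here; the only piece of the argument that requires any thought is choosing the subgroup $H$ correctly, and an $n$-cycle is the natural choice since it is precisely a permutation whose non-identity powers act without fixed points.
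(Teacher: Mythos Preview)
Your argument is correct: partitioning $S_n$ into the $(n-1)!$ left cosets of the cyclic group generated by an $n$-cycle, and observing that distinct elements of a single coset disagree everywhere, is exactly the classical Deza--Frankl proof. The paper itself does not supply a proof of this theorem at all; it merely states the result and cites \cite{DF1977}, so there is nothing to compare against beyond noting that your approach coincides with the original one.
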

Clearly, a $1$-coset is a $1$-intersecting family of size $(n-1)!$. Deza and Frankl \cite{DF1977} conjectured that the $1$-cosets are the only $1$-intersecting families of permutations with size $(n-1)!$. This conjecture was first confirmed by Cameron and Ku \cite{CK2003} and independently by Larose and Malvenuto \cite{LM2004}. As for $t$-intersecting families of permutations when $t\geq 2$, in the same paper, Deza and Frankl also conjectured that the $t$-cosets are the only largest $t$-intersecting families in $S_n$ provided $n$ is large enough. Using eigenvalue techniques together with the representation theory of $S_n$, Ellis, Friedgut and Pilpel \cite{EFP2011} proved this conjecture.


Following the path led by Erd\H{o}s, Ko and Rado, the above studies of intersections problems about subspaces and permutations share a same type of flavour: Given a family of subspaces or permutations with some certain kind of intersecting property, people try to figure out how large this family can be. Once the maximal size of the family with given intersecting property is determined, people turn to an immediate inverse problem --- characterizing the structure of the extremal family. This gives rise to further studies of the stability and supersaturation for extremal families. In recent years, there have been a lot of works concerning this kind of inverse problems, for examples, see \cite{BDLST2019,DGS2016,BDDLS2015,KKK2012,Ellis2011,FKR2016,Russell2012,GS2020,DT2016}.

In this paper, with the same spirit, we consider an inverse problem for intersecting families of subspaces and permutations from another point of view. Instead of being intersecting, we assume that the family possesses a certain property that ``maximizes'' the intersections quantitatively. The study of this kind of inverse problem was first initiated by the first and the last authors in \cite{KG2020}, where families of subsets were investigated.

To state the problem formally, first, we introduce the notion \emph{total intersection number} of a family. Let $X$ be the underlying set with finite members, $X$ can be ${[n]\choose k}$, or $\Gaussbinom{V}{k}$ for an $n$-dimensional space $V$ over $\mathbb{F}_q$, or $S_n$. Consider a family $\mathcal{F}\subseteq X$, the \emph{total intersection number} of $\mathcal{F}$ is defined by
\begin{flalign}\label{basic_id}
\mathcal{I}(\mathcal{F})=\sum\limits_{A\in \mathcal{F}}\sum\limits_{B\in \mathcal{F}}int(A,B),
\end{flalign}
where $int(A,B)$ has different meanings for different $X$s. When $X={[n]\choose k}$, $int(A,B)=|A\cap B|$; when $X=\Gaussbinom{V}{k}$, $int(A,B)=\dim(A\cap B)$; when $X=S_n$, $int(A,B)=|\{i\in [n]:A(i)=B(i)\}|$. Moreover, we denote
\begin{equation}\label{basic_id2}
\mathcal{MI}(X,\mathcal{F})=\max_{\mathcal{G}\subseteq X, |\mathcal{G}|=|\mathcal{F}|}\mathcal{I}(\mathcal{G})
\end{equation}
as the maximal total intersection number among all families in $X$ with the same size of $\mathcal{F}$ and we denote it as $\mathcal{MI}(\mathcal{F})$ for short if $X$ is clear. Similarly, for two families $\mathcal{F}_1$ and $\mathcal{F}_2$ in $X$, the total intersection number between $\mathcal{F}_1$ and $\mathcal{F}_2$ is defined as
\begin{equation}\label{basic_id3}
\mathcal{I}(\mathcal{F}_1,\mathcal{F}_2)=\sum_{A\in\mathcal{F}_1}\sum_{B\in\mathcal{F}_2}int(A,B).
\end{equation}
Clearly, we have $\mathcal{I}(\mathcal{F},\mathcal{F})=\mathcal{I}(\mathcal{F})$.

Certainly, the value of $\mathcal{I}(\mathcal{F})$ reveals the level of intersections among the members of $\mathcal{F}$: the larger $\mathcal{I}(\mathcal{F})$ is, the more intersections there will be in $\mathcal{F}$. For an integer $t\geq 1$, note that being $t$-intersecting also indicates that $\mathcal{F}$ possesses a large amount of intersections, therefore, it is natural to ask the relationship between being $t$-intersecting and having large $\mathcal{I}(\mathcal{F})$:
\begin{question}\label{question1}
For $t\geq 1$ and $n$ large enough, denote $M(X,t)$ as the maximal size of the $t$-intersecting family in $X$. Let $\mathcal{F}\subseteq X$ with size $M(X,t)$, if $\mathcal{I}(\mathcal{F})=\mathcal{MI}(\mathcal{F})$, is $\mathcal{F}$ a $t$-intersecting family? Or, if $\mathcal{F}$ is a maximal $t$-intersecting family in $X$, do we have $\mathcal{I}(\mathcal{F})=\mathcal{MI}(\mathcal{F})$?
\end{question}


In \cite{KG2020}, by taking $X={[n]\choose k}$, we show that when $|\mathcal{F}|={n-t\choose k-t}$ and $\mathcal{I}(\mathcal{F})=\mathcal{MI}(\mathcal{F})$, the full $t$-star (the family consisting of all $k$-sets in ${[n]\choose k}$ containing $t$ fixed elements) is indeed the only structure of $\mathcal{F}$, which answers the Question \ref{question1} for the case $X={[n]\choose k}$. In this paper, when $X=\Gaussbinom{V}{k}$ and $\dim(V)=n$ is large enough, we obtain similar results for general $t\geq1$; when $X=S_n$, we answer the Question \ref{question1} for the case $t=1$. Noticed that the property of having maximal total intersection number can be considered for families of any size. Actually, we can ask the following more general question:
\begin{question}\label{question}
For a family $\mathcal{F}\subseteq X$, if $\mathcal{I}(\mathcal{F})=\mathcal{MI}(\mathcal{F})$, what can we say about its structure?
\end{question}

Aiming to solve these questions, we provide structural characterizations for optimal families satisfying $\mathcal{I}(\mathcal{F})=\mathcal{MI}(\mathcal{F})$ for both of the cases when $X=\Gaussbinom{V}{k}$ and $X=S_n$. Moreover, we also obtain some upper bounds on $\mathcal{MI}(\mathcal{F})$ for several ranges of $|\mathcal{F}|$ for both cases. The detailed descriptions of our results will be shown in the following subsection.

\subsection{Our results}

When $X=\Gaussbinom{V}{k}$, through combinatorial arguments, we have the following theorem which shows the main structure of the optimal family $\mathcal{F}\subseteq X$ with $|\mathcal{F}|$ not much larger than $\Gaussbinom{n-t}{k-t}_q$.

\begin{theorem}\label{stabilityforsub}
Given positive integers $1\leq t<k$ and $n\geq (4k+4)^2\Gaussbinom{k}{t}_q^2$, let $\mathcal{F}$ be a family of $k$-dim subspaces of $V$ with size $|\mathcal{F}|=\delta\Gaussbinom{n-t}{k-t}_q$ for some $\delta\in[\frac{(4k+4)^2n}{q^{n-k}},1+\frac{1}{96t\ln{q}(k+1)}]$ satisfying $\mathcal{I}(\mathcal{F})=\mathcal{MI}(\mathcal{F})$. Then, when $\delta\leq 1$, $\mathcal{F}$ is contained in a full $t$-star and when $\delta >1$, $\mathcal{F}$ contains a full $t$-star.
\end{theorem}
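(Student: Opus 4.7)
The plan is to adapt the degree-based approach used in \cite{KG2020} for the subset setting to the lattice of subspaces of $V$. The central object is the \emph{$t$-degree sequence}: for each $t$-dimensional subspace $T\subseteq V$, set $d_T(\mathcal{F})=|\{F\in\mathcal{F}:T\subseteq F\}|$. The basic double-counting identity $\sum_{T}d_T(\mathcal{F})=\Gaussbinom{k}{t}_q|\mathcal{F}|$, combined with the layer decomposition
$$\mathcal{I}(\mathcal{F})=\sum_{i=1}^{k}\bigl|\{(F_1,F_2)\in\mathcal{F}^2:\dim(F_1\cap F_2)\geq i\}\bigr|,$$
will let us convert the hypothesis $\mathcal{I}(\mathcal{F})=\mathcal{MI}(\mathcal{F})$ into quantitative information about the maximum $t$-degree $\Delta_t(\mathcal{F})=\max_T d_T(\mathcal{F})$.

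The first substantive step is to compute $\mathcal{I}(\mathcal{G})$ exactly for the canonical candidate families $\mathcal{G}$: a full $t$-star $\mathcal{S}_{T_0}$, any $\delta$-fraction of $\mathcal{S}_{T_0}$ when $\delta\leq 1$, and the union of $\mathcal{S}_{T_0}$ with $(\delta-1)\Gaussbinom{n-t}{k-t}_q$ appropriately chosen extra members when $\delta>1$. Passing to the quotient $V/T_0$, each of these computations reduces to evaluating intersection sums over $(k-t)$-dimensional subspaces of an $(n-t)$-dimensional space, which can be handled with standard Gaussian binomial identities and yields an explicit target value for $\mathcal{MI}(\mathcal{F})$. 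Combining these targets with the layer decomposition and an averaging/convexity argument over the $d_T(\mathcal{F})$, I would then derive the crucial bound $\Delta_t(\mathcal{F})\geq(1-\varepsilon)|\mathcal{F}|$ when $\delta\leq 1$, and $\Delta_t(\mathcal{F})=\Gaussbinom{n-t}{k-t}_q$ when $\delta>1$, with $\varepsilon$ small enough that the final swap step closes.

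Fix a $t$-space $T^{*}$ attaining $\Delta_t(\mathcal{F})$ and split $\mathcal{F}=\mathcal{F}_1\sqcup\mathcal{F}_2$ with $\mathcal{F}_1=\{F\in\mathcal{F}:T^{*}\subseteq F\}$. For any $F\in\mathcal{F}_2$ and any $F'\in\mathcal{S}_{T^{*}}\setminus\mathcal{F}$, the change in $\mathcal{I}$ after swapping $F$ for $F'$ is dominated by $\sum_{G\in\mathcal{F}_1}\dim(F'\cap G)-\sum_{G\in\mathcal{F}_1}\dim(F\cap G)$, because the bound on $\Delta_t$ forces $|\mathcal{F}_1|\gg|\mathcal{F}_2|$. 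Using that $\mathcal{F}_1$ is a dense subfamily of $\mathcal{S}_{T^{*}}$ and the assumption $n\geq (4k+4)^{2}\Gaussbinom{k}{t}_q^{2}$, the first sum exceeds the second by at least $\Gaussbinom{n-t-1}{k-t-1}_q$ up to lower-order terms, which is more than enough to outweigh any loss incurred on $\mathcal{F}_2$. A swapping argument then forces $\mathcal{F}_2=\emptyset$ when $\delta\leq 1$ and $\mathcal{S}_{T^{*}}\subseteq\mathcal{F}$ when $\delta>1$, giving the stated structure. The principal obstacle is the second step: extracting a single dominant $t$-degree from a purely quadratic hypothesis, rather than many moderately large degrees, is where the precise $q$-analog arithmetic has to be set against the target value, and this is exactly the point at which the slightly exotic thresholds $\delta\geq(4k+4)^{2}n/q^{n-k}$ and $\delta\leq 1+1/(96t\ln q(k+1))$ are forced on us.
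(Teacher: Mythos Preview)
Your overall architecture---compute $\mathcal{I}$ for a candidate $t$-star construction to get a lower bound on $\mathcal{MI}(\mathcal{F})$, use a convexity argument on the $t$-degrees $d_T(\mathcal{F})$ to find a dominant $t$-space $T^{*}$, then swap any stray member for one through $T^{*}$---is exactly what the paper does. So the plan is correct in outline, and the final swap step is essentially as in the paper (the paper's gain is of order $|\mathcal{F}|/3$, not $\Gaussbinom{n-t-1}{k-t-1}_q$, but your description is in the right spirit).

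The real content, as you yourself flag, is the concentration step, and here your proposal has a genuine gap: you say ``averaging/convexity over the $d_T$'' but do not name a mechanism, and naive averaging only gives $\Delta_t(\mathcal{F})\gtrsim |\mathcal{F}|/\Gaussbinom{k}{t}_q$, which is far too weak. The paper closes this with two specific ideas you are missing. First, it bounds the number of \emph{popular} $t$-spaces: setting $\mathcal{X}=\{T:d_T(\mathcal{F})\geq |\mathcal{F}|/((2k+2)\Gaussbinom{k}{t}_q)\}$, an inclusion--exclusion over $\mathcal{X}$ (using that distinct $T_1,T_2\in\mathcal{X}$ satisfy $\dim(T_1+T_2)\geq t+1$, so $|\mathcal{F}(T_1)\cap\mathcal{F}(T_2)|\leq\Gaussbinom{n-t-1}{k-t-1}_q$) forces $|\mathcal{X}|<(4k+4)\Gaussbinom{k}{t}_q$; this is precisely where the lower threshold on $\delta$ enters. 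Second, the convexity is applied not to the degrees directly but via Jensen on the real-valued function $x\mapsto\Gaussbinom{x}{t}_q$, yielding $\sum_T d_T(\mathcal{F})^2=\sum_{A,B\in\mathcal{F}}\Gaussbinom{\dim(A\cap B)}{t}_q\geq \Gaussbinom{\mathcal{I}(\mathcal{F})/|\mathcal{F}|^2}{t}_q|\mathcal{F}|^2$, which for $\delta\leq 1$ gives $\sum_T d_T^2\geq |\mathcal{F}|^2$ and for $\delta>1$ gives $\sum_T d_T^2\geq(1-\tfrac{1}{24(k+1)})|\mathcal{F}|^2$ after estimating $\Gaussbinom{t-x}{t}_q$ for small $x$ (this is where the upper threshold $1+1/(96t\ln q(k+1))$ appears). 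Combining the $\ell^2$ lower bound with the popular-set bound, and splitting $\sum_T d_T^2$ over $T\in\mathcal{X}$ and $T\notin\mathcal{X}$, then yields $d_{T^{*}}(\mathcal{F})\geq(1-\tfrac{2}{3k+3})|\mathcal{F}|$ for the maximiser $T^{*}$.

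One small correction to your ordering: the paper does \emph{not} obtain $\Delta_t(\mathcal{F})=\Gaussbinom{n-t}{k-t}_q$ directly from the concentration step when $\delta>1$. It obtains the same $(1-\tfrac{2}{3k+3})|\mathcal{F}|$ bound in both regimes, and only after the swap argument concludes that every $k$-space through $T^{*}$ already lies in $\mathcal{F}$.
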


When $X=S_n$, for an integer $s>\frac{1}{2}(n-1)!$, consider the subfamilies of $X$ consisting of $\lfloor\frac{s}{(n-1)!}\rfloor$ pairwise disjoint $1$-cosets and $s-\lfloor\frac{s}{(n-1)!}\rfloor(n-1)!$ permutations from another $1$-coset disjoint with all the former $1$-cosets. We denote $\mathcal{T}(n,s)$ as the family of this form with size $s$ with maximal total intersection number. Using eigenvalue techniques together with the representation theory of $S_n$, we prove that families of permutations of size $\Theta((n-1)!)$ having large total intersection numbers are close to the union of $1$-cosets.

\begin{theorem}\label{theremovallemma}
For a sufficiently large integer $n$, there exist positive constants $C_0$ and $c$ such that the following holds. For integer $0\leq k\leq \frac{n-1}{2}$, let $\varepsilon\in(-\frac{1}{2},\frac{1}{2}]$ and $\delta\geq 0$ such that $\max\{|\varepsilon|,\delta\}\leq ck$. If $\mathcal{F}$ is a subfamily of $S_n$ with size $(k+\varepsilon)(n-1)!$ and $\mathcal{I}(\mathcal{F})\geq \mathcal{I}(\mathcal{T}(n,|\mathcal{F}|))-\delta((n-1)!)^2$, then there exists some $\mathcal{G}\subseteq S_n$ consisting of $k$ $1$-cosets such that
\begin{equation*}
|\mathcal{F}\Delta\mathcal{G}|\leq C_0\left(\sqrt{2k(|\varepsilon|+\delta)}+\frac{k}{n}\right)|\mathcal{F}|.
\end{equation*}
Moreover, when $\varepsilon=\delta=0$, $\mathcal{F}=\mathcal{G}_0$ for some $\mathcal{G}_0\subseteq S_n$ consisting of $k$ pairwise disjoint $1$-cosets.
\end{theorem}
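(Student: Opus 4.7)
The proof strategy is spectral: interpret $\mathcal{I}(\mathcal{F})$ as a quadratic form and analyze its extremizers via the representation theory of $S_n$. Setting $M(\sigma,\tau)=\text{fix}(\sigma^{-1}\tau)$, one has $\mathcal{I}(\mathcal{F})=\langle\mathbf{1}_\mathcal{F},M\mathbf{1}_\mathcal{F}\rangle$, and the identity $\text{fix}=\chi^{(n)}+\chi^{(n-1,1)}$ combined with Schur's lemma identifies the nonzero eigenvalues of $M$ as $n!$ on the trivial isotypic and $n!/(n-1)$ on the $(n-1,1)$-isotypic, with $0$ elsewhere. Let $U_0\oplus U_1$ denote the sum of these two isotypic components, which coincides with the span of all $1$-coset indicators. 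Decomposing $\mathbf{1}_\mathcal{F}=f_0+f_1+f_\perp$ along the three pieces (trivial, $(n-1,1)$, higher) yields
\begin{equation*}
\mathcal{I}(\mathcal{F})=|\mathcal{F}|^2+\frac{n!}{n-1}\|f_1\|^2=\Bigl[|\mathcal{F}|^2+\frac{n!}{n-1}\Bigl(|\mathcal{F}|-\frac{|\mathcal{F}|^2}{n!}\Bigr)\Bigr]-\frac{n!}{n-1}\|f_\perp\|^2,
\end{equation*}
a spectral upper bound with deficit $\tfrac{n!}{n-1}\|f_\perp\|^2$.

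A direct computation shows that when $|\mathcal{F}|=k(n-1)!$ the spectral maximum is attained by $\mathcal{T}(n,|\mathcal{F}|)$ (since $\mathbf{1}_\mathcal{T}\in U_0\oplus U_1$), while for $|\mathcal{F}|=(k+\varepsilon)(n-1)!$ the residual gap between the spectral maximum and $\mathcal{I}(\mathcal{T}(n,|\mathcal{F}|))$ is of order $|\varepsilon|\,k\,((n-1)!)^2$. Combined with the hypothesis $\mathcal{I}(\mathcal{F})\geq \mathcal{I}(\mathcal{T}(n,|\mathcal{F}|))-\delta((n-1)!)^2$, this forces
\begin{equation*}
\|f_\perp\|^2\leq C_1(|\varepsilon|+\delta)\,k\,(n-1)!,
\end{equation*}
so $\mathbf{1}_\mathcal{F}$ is $L^2$-close to $U_0\oplus U_1$. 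Every element of $U_0\oplus U_1$ can be written as $\sigma\mapsto\sum_i a_{i,\sigma(i)}$ for some $a\in\mathbb{R}^{n\times n}$, with $(2n{-}2)$-dimensional gauge freedom $a_{ij}\mapsto a_{ij}+c_i+d_j$. Thus $f_0+f_1$ corresponds, after a choice of gauge, to a specific matrix $a$.

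The core of the argument is the rounding step. Since $f_0+f_1$ is $L^2$-close to the $\{0,1\}$-valued function $\mathbf{1}_\mathcal{F}$ of size $\approx k(n-1)!$, the level-$1$ function $\sigma\mapsto\sum_i a_{i,\sigma(i)}$ must cluster its values near $\{0,1\}$; a Boolean-rigidity argument in the spirit of Ellis's stability work for permutations then forces $a$, after gauge fixing, to have its dominant entries concentrated on a single row or single column, with $k$ entries close to $1$ and the rest small. The union of the corresponding $k$ $1$-cosets is the candidate $\mathcal{G}$ — a row- or column-star, which is in fact a union of $k$ pairwise disjoint $1$-cosets when $\varepsilon=\delta=0$. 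To bound $|\mathcal{F}\Delta\mathcal{G}|=\|\mathbf{1}_\mathcal{F}-\mathbf{1}_\mathcal{G}\|^2$ one decomposes the squared $L^2$-norm along the three isotypic components: the trivial piece contributes $O(\varepsilon^2(n-1)!)$, the $f_\perp$ piece contributes the $\sqrt{2k(|\varepsilon|+\delta)}\,|\mathcal{F}|$ term via the previous estimate, and the middle $(n-1,1)$-piece contributes the $k|\mathcal{F}|/n$ correction arising from the rounding residue. The moreover statement ($\varepsilon=\delta=0$) is the exact version: the deficit vanishes, so $f_\perp=0$ and $\mathbf{1}_\mathcal{F}\in U_0\oplus U_1$; the rigidity step then gives $\mathcal{F}$ as a union of $k$ pairwise disjoint $1$-cosets exactly. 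The main obstacle is precisely this Boolean-rigidity step — converting an $L^2$-estimate on $f_\perp$ into combinatorial concentration of the matrix $a$ on a single row or column requires a quantitatively delicate combination of the degree-$1$ structure with the $\{0,1\}$-constraint on $\sum_i a_{i,\sigma(i)}$, and it is in this step that the constants $C_0$ and $c$ of the theorem are determined.
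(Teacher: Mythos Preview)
Your spectral setup is correct and in fact cleaner than the paper's. The paper writes $B=\sum_{s=1}^n(J-A_s)$ and then spends considerable effort (Proposition~2.8, Lemma~2.9, Lemma~2.10, and Claim~1 in the proof) estimating $\sum_s\lambda_{\rho}^{(s)}$ for all $\rho$, ultimately showing $\sum_s f_2^T A_s f_2 \geq -o(1)\cdot (n!)^2\|f_2\|^2$. Your observation that $\mathrm{fix}=\chi^{(n)}+\chi^{(n-1,1)}$ gives the eigenvalues of $M$ directly as $n!,\,n!/(n-1),\,0$ shows this term is \emph{exactly zero}, bypassing all of that machinery. The resulting identity $\mathcal{I}(\mathcal{F})=[\text{spectral max}]-\tfrac{n!}{n-1}\|f_\perp\|^2$ is the same as the paper's (\ref{ineq08}) once one notes the vanishing.

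Two quantitative slips: the gap between the spectral maximum and $\mathcal{I}(\mathcal{T}(n,|\mathcal{F}|))$ is of order $|\varepsilon|((n-1)!)^2$, not $|\varepsilon|k((n-1)!)^2$; correspondingly $\|f_\perp\|^2\lesssim (|\varepsilon|+\delta)(n-1)!$, without the extra $k$. This matters for the exponent of $k$ in the final bound. Also, the Ellis--Filmus--Friedgut stability theorem (which the paper invokes as Theorem~4.1, and which is what your ``Boolean-rigidity'' step amounts to) outputs a union of $k$ $1$-cosets, not necessarily concentrated on a single row or column of the matrix $a$; your description overclaims the structure there, though the theorem only requires ``$k$ $1$-cosets'' so the conclusion is unaffected. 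The paper does not reprove this rigidity step either---it is cited as a black box---so your identifying it as the main obstacle is accurate.
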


Moreover, using linear programming method over association schemes, we also have the following upper bounds on $\mathcal{MI}(\mathcal{F})$.

\begin{theorem}\label{subspace_intersection}
Given positive integers $n$, $k$, $M$ with $k\leq n$ and $M\leq \Gaussbinom{n}{k}_q$, for $\mathcal{F}\subseteq \Gaussbinom{V}{k}$ with $|\mathcal{F}|=M$, we have
\begin{flalign}\label{lower_bound_0}
\mathcal{MI}(\mathcal{F})\le\left(\frac{\Gaussbinom{n}{k}_q}{M}-\Gaussbinom{n}{1}_q\right)\frac{qM^2\Gaussbinom{k}{1}_q\Gaussbinom{n-k}{1}_q}{\Gaussbinom{n}{1}_q\left(\Gaussbinom{n}{1}_q-1\right)}+kM^2,
\end{flalign}
especially, when $n\ge 2k$ and $M\leq\Gaussbinom{n-1}{k-1}_q$, we have
\begin{flalign}\label{lower_bound_space_2}
\mathcal{MI}(\mathcal{F})\leq\left[\frac{\Gaussbinom{n}{k}_q}{M}-\frac{(q^n-1)(q^{n-1}-1)}{(q-1)(q^k-1)}\right]\frac{M^2(q^k-1)(q^{k-1}-1)(q^{n-k}-1)}{(q^n-1)(q^{n-1}-1)(q^{n-2}-1)}+kM^2.
\end{flalign}
\end{theorem}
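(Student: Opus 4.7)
\medskip
\noindent\textbf{Proof proposal.} The plan is to apply the Delsarte linear programming method over the Grassmann association scheme on $\Gaussbinom{V}{k}_q$, with $N = \Gaussbinom{n}{k}_q$. The scheme has relations $R_0, R_1, \ldots, R_k$ where $(A,B) \in R_i$ iff $\dim(A \cap B) = k-i$, valencies $v_i = q^{i^2}\Gaussbinom{k}{i}_q\Gaussbinom{n-k}{i}_q$, adjacency matrices $A_0 = I, A_1, \ldots, A_k$ generating the Bose--Mesner algebra, and primitive idempotents $E_0 = J/N, E_1, \ldots, E_k$ with eigenvalue matrix $P$ such that $A_i = \sum_j P_{ji} E_j$. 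Setting $n_i = \mathbf{1}_{\mathcal{F}}^T A_i \mathbf{1}_{\mathcal{F}}$, one has the primal constraints $n_0 = M$, $\sum_i n_i = M^2$, $n_i \ge 0$, and the Krein inequalities $\sum_i n_i Q_{ij} \geq 0$ for $j \geq 1$. The key identity I would exploit is $\mathcal{I}(\mathcal{F}) = \sum_{i=0}^k (k-i) n_i = \mathbf{1}_{\mathcal{F}}^T T \mathbf{1}_{\mathcal{F}}$ with $T = \sum_{i=0}^k (k-i) A_i$ in the Bose--Mesner algebra.

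For the first bound \eqref{lower_bound_0}, the plan is to construct a matrix $Y = \alpha I + \beta J + \gamma A_1$ in the Bose--Mesner algebra that dominates $T$ in the positive semidefinite order, so that $\mathbf{1}_{\mathcal{F}}^T Y \mathbf{1}_{\mathcal{F}} = \alpha M + \beta M^2 + \gamma \mathbf{1}_{\mathcal{F}}^T A_1 \mathbf{1}_{\mathcal{F}}$ can be bounded purely in $M$ by inserting the Hoffman-type inequality $\mathbf{1}_{\mathcal{F}}^T A_1 \mathbf{1}_{\mathcal{F}} \leq v_1 M^2/N + \theta_1(M - M^2/N)$, where $\theta_1$ is the second-largest eigenvalue of the Grassmann graph. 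The PSD dominance $Y \succeq T$ reduces to eigenvalue inequalities on each $E_j$, and I would pick the tightest dual-feasible triple $(\alpha,\beta,\gamma)$. Simplification using the $q$-binomial identities $\Gaussbinom{n}{1}_q - 1 = q\Gaussbinom{n-1}{1}_q$, $v_1 = q\Gaussbinom{k}{1}_q\Gaussbinom{n-k}{1}_q$, and $\Gaussbinom{n-1}{k-1}_q \Gaussbinom{n}{1}_q = \Gaussbinom{n}{k}_q \Gaussbinom{k}{1}_q$ should recast the result as the stated closed form, with the multipliers chosen so that the bound is tight at the full $1$-star (of size $\Gaussbinom{n-1}{k-1}_q$).

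For the sharper bound \eqref{lower_bound_space_2} valid under $n \geq 2k$ and $M \leq \Gaussbinom{n-1}{k-1}_q$, the plan is to bring in the additional Krein inequality for $E_2$. Activating a nonnegative multiplier on this constraint sharpens the $M^2$-coefficient from $k - v_1/(\Gaussbinom{n}{1}_q - 1)$ to $k - \Gaussbinom{k-1}{1}_q\Gaussbinom{n-k}{1}_q/\Gaussbinom{n-2}{1}_q$; the restriction $M \leq \Gaussbinom{n-1}{k-1}_q$ enters precisely as the threshold up to which the associated dual multiplier remains feasible. The main obstacle throughout will be the explicit identification and algebraic simplification of the Grassmann-scheme eigenvalues (the Eberlein polynomial values $P_{1i}$ and $P_{2i}$) needed to verify dual feasibility and to match the clean $q$-binomial expressions in the statement; for the second bound in particular, checking that the $E_2$-multiplier is dual feasible exactly when $M \leq \Gaussbinom{n-1}{k-1}_q$ is the delicate technical point.
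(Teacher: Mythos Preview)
Your overall framework---Delsarte LP on the Grassmann scheme---is right, and your $n_i=Ma_i$ and $\sum_i n_iQ_j(i)=M^2b_j$ are exactly the inner and dual distributions the paper works with. But two of the concrete steps do not match what is needed.

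For \eqref{lower_bound_0} the paper does not build a PSD-dominating matrix. It writes $\mathcal{I}(\mathcal{F})=kM^2-M\sum_i i\,a_i$, computes $b_1=\frac{1}{M}\sum_i Q_1(i)a_i$ explicitly from $Q_1(i)=\bigl(\Gaussbinom{n}{1}-1\bigr)\bigl(1-\tfrac{\Gaussbinom{n}{1}\Gaussbinom{i}{1}}{q^{i}\Gaussbinom{k}{1}\Gaussbinom{n-k}{1}}\bigr)$, and uses the elementary pointwise inequality $\Gaussbinom{i}{1}/q^{i-1}\le i$ to convert the $\sum_i\Gaussbinom{i}{1}q^{-i}a_i$ appearing in $b_1$ into $\sum_i i\,a_i$. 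This gives $\mathcal{I}(\mathcal{F})\le(\text{affine function of }b_1)$, after which $b_1\le\Gaussbinom{n}{k}/M-1$ (from $b_0=1$, $\sum_jb_j=\Gaussbinom{n}{k}/M$, $b_j\ge0$) yields \eqref{lower_bound_0}. Your two-step plan (dominate $T$ by $\alpha I+\beta J+\gamma A_1$, then Hoffman on $A_1$) is internally redundant---once you check $Y\succeq T$ eigenspace by eigenspace you already have the eigenvalues $\tau_j$ of $T$ and could bound $\sum_j\tau_j\|E_j\mathbf{1}_{\mathcal{F}}\|^2$ directly---and it is not clear it reproduces the exact constants in \eqref{lower_bound_0}, since that bound is the one coming from the $\Gaussbinom{i}{1}/q^{i-1}\le i$ relaxation rather than the pure spectral bound $\tau_0M^2/N+\max_{j\ge1}\tau_j\,(M-M^2/N)$.

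For \eqref{lower_bound_space_2} there is a genuine gap. You propose to activate the constraint attached to $E_2$ (i.e.\ $b_2\ge0$), but all the constraints $b_r\ge0$ for $r\ge2$ are already consumed in deriving \eqref{lower_bound_0} via $\sum_{r\ge2}b_r\ge0$, so this cannot improve anything. What the paper actually does is set up an LP with primal variables $y_r=b_r$ ($r\ge2$) and constraints coming from $a_i\ge0$; in the dual it activates $x_k$, the multiplier attached to $a_k\ge0$ (pairs at \emph{maximal} distance $k$), taking $x_k=[P_k(2)-P_k(1)]^{-1}$ and $x_1=\cdots=x_{k-1}=0$. Feasibility of this dual point is where $n\ge2k$ enters: it makes $q^{\binom{r}{2}+k(k-r)}\Gaussbinom{n-k-r}{k-r}$ monotone in $r$, hence $\frac{P_k(1)-P_k(r)}{P_k(2)-P_k(1)}\ge-1$ for all $r\ge2$. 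The hypothesis $M\le\Gaussbinom{n-1}{k-1}$ is \emph{not} a feasibility threshold as you claim; it is the threshold below which the dual objective at this point is positive (equivalently, above which the primal minimum is $0$ and \eqref{lower_bound_space_2} offers nothing beyond \eqref{lower_bound_0}). So both the constraint you plan to activate and the roles you assign to the two side conditions need to be corrected.
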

%
%
%

\begin{theorem}\label{lpbound}
Given positive integers $n$ and $M\leq n!$, for $\mathcal{F}\subseteq S_n$ with $|\mathcal{F}|=M$, we have
\begin{flalign*}
\mathcal{MI}(\mathcal{F})\leq\frac{M^2}{n-1}\left(\frac{n!}{M}+n-2\right).
\end{flalign*}
\end{theorem}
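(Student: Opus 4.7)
The plan is to diagonalize the total-intersection quadratic form on the group algebra $\mathbb{C}[S_n]$ using the representation theory of the symmetric group. The starting observation is that the pointwise agreement of two permutations depends only on $A^{-1}B$ through a class function:
\begin{equation*}
|\{i \in [n] : A(i) = B(i)\}| \;=\; |\mathrm{Fix}(A^{-1}B)| \;=\; \chi(A^{-1}B),
\end{equation*}
where $\chi$ is the character of the natural permutation representation of $S_n$. Setting $v = \mathbf{1}_{\mathcal{F}} \in \mathbb{R}^{S_n}$ and defining the symmetric matrix $M$ by $M_{A,B} = \chi(A^{-1}B)$, we rewrite $\mathcal{I}(\mathcal{F}) = v^{\top} M v$, turning the problem into a spectral estimate.

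Since $\chi$ is a class function, the operator $M$ acts on $\mathbb{C}[S_n]$ as right multiplication by the central element $c_\chi = \sum_\sigma \chi(\sigma)\sigma \in Z(\mathbb{C}[S_n])$. By Schur's lemma, $c_\chi$ acts on each irreducible $V_\mu$ as the scalar $\lambda_\mu = \frac{n!}{\dim\mu}\langle \chi, \chi_\mu\rangle$, and hence on the whole $\mu$-isotypic component of the regular representation. The classical decomposition $\chi = \chi_{(n)} + \chi_{(n-1,1)}$ into the trivial and standard irreducibles immediately shows that only two eigenvalues of $M$ are nonzero: $\lambda_{(n)} = n!$ on the $1$-dimensional subspace of constant functions, and $\lambda_{(n-1,1)} = \frac{n!}{n-1}$ on the $(n-1)^2$-dimensional standard isotypic, with $\lambda_\mu = 0$ on all other components.

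Decomposing $v$ orthogonally as $v = v_{\mathrm{triv}} + v_{\mathrm{std}} + v_{\mathrm{rest}}$, the trivial projection is the constant vector $\frac{M}{n!}\mathbf{1}$, with $\|v_{\mathrm{triv}}\|^2 = M^2/n!$. Because $v$ is a $0/1$-vector with exactly $M$ ones, $\|v\|^2 = M$, which yields $\|v_{\mathrm{std}}\|^2 \leq M - M^2/n!$. Combining this with the spectral decomposition of $M$ gives
\begin{equation*}
\mathcal{I}(\mathcal{F}) \;=\; n!\cdot\tfrac{M^2}{n!} + \tfrac{n!}{n-1}\|v_{\mathrm{std}}\|^2 \;\leq\; M^2 + \tfrac{n!}{n-1}\left(M - \tfrac{M^2}{n!}\right) \;=\; \tfrac{M^2}{n-1}\left(\tfrac{n!}{M} + n - 2\right),
\end{equation*}
as desired.

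There is no real difficulty beyond pinning down the two nonzero eigenvalues of $M$, which relies only on the classical decomposition $\chi = \chi_{(n)} + \chi_{(n-1,1)}$. As a sanity check, the bound is tight at $\mathcal{F} = S_n$ (where $v$ lies entirely in the trivial isotypic) and at any single $1$-coset, whose indicator is exactly $\sigma \mapsto \mathbf{1}[\sigma(i)=j]$ and thus lies inside the trivial $\oplus$ standard isotypic; this confirms that nothing has been lost in the spectral estimate and shows that the inequality $\|v_{\mathrm{std}}\|^2 \leq M - M^2/n!$ is the only place that needs to be saturated for equality to hold.
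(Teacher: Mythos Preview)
Your proof is correct. At the mathematical level it is the same spectral argument as the paper's: both exploit that $\mathcal{I}(\mathcal{F})=v^{\top}Mv$ for a matrix in the center of the group algebra, diagonalize via the isotypic decomposition of $\mathbb{C}[S_n]$, observe that only the trivial and standard components carry nonzero eigenvalue (because the permutation character equals $\chi_{(n)}+\chi_{(n-1,1)}$), and then bound $\|v_{\mathrm{std}}\|^2$ by $\|v\|^2-\|v_{\mathrm{triv}}\|^2=M-M^2/n!$.

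The difference is only in packaging. The paper routes the computation through the conjugacy class association scheme of $S_n$: it introduces the inner distribution $(a_i)$ and dual distribution $(b_j)$, proves the identity $\mathcal{I}(\mathcal{F})=\tfrac{M^2}{n-1}\bigl(\tfrac{n!}{M}+n-2-\sum_{r\ge 2}b_r\bigr)$, and then invokes the general Delsarte fact $b_r\ge 0$. Your $\|v_{\mu}\|^2$ is exactly $\tfrac{M^2}{n!}\,b_\mu$, so the two inequalities $\|v_{\mathrm{std}}\|^2\le M-M^2/n!$ and $\sum_{r\ge 2}b_r\ge 0$ are literally the same inequality. What your write-up gains is that it is self-contained and avoids setting up the scheme formalism; what the paper's framework gains is that it places the bound inside a Delsarte-type LP, making it clear how one would try to sharpen the estimate for smaller $M$ (as they do in the Grassmann case). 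One cosmetic point: you use $M$ for both the matrix and the size $|\mathcal{F}|$; renaming the matrix would make the argument easier to read.
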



\subsection{Notations and Outline}

Throughout this paper, we shall use the following standard mathematical notations .

\begin{itemize}
  \item Denote $\mathbb{N}$ as the set of all nonnegative integers. For any $n\in\mathbb{N}\setminus\{0\}$, let $[n]=\{1,2,\ldots,n\}$. For any $a,b\in\mathbb{N}$ such that $a\leq b$, let $[a,b]=\{a,a+1,\ldots,b\}$.
  \item Given finite set $S$ and any positive integer $k$, denote ${S\choose k}$ as the family of all $k$-subsets of $S$ and $2^S$ as the family of all subsets of $S$.
  \item For a given prime power $q$ and a positive integer $n$, we denote $\mathbb{F}_q$ as a finite field with $q$ elements and $\mathbb{F}_q^n$ as the $n$-dimensional vector space over $\mathbb{F}_q$. Moreover, for a vector $\mathbf{x}$ with length $n$, we denote $\mathbf{x}_i$ as the $i_{th}$ position of $\mathbf{x}$ for $1\leq i\leq n$.
  \item For two subspaces $V_1,V_2\subseteq \mathbb{F}^{n}_q$, we denote $V_1+V_2$ as the sum of these two subspaces and $V_1/V_2$ as the quotient subspace of $V_1$ by $V_2$. If $V_1\cap V_2=\{\mathbf{0}\}$, we denote $V_1\oplus V_2$ as the direct sum of $V_1,V_2$. Moreover, we have $\dim(V_1+ V_2)=\dim(V_1)+\dim(V_2)-\dim(V_1\cap V_2)$ and $\dim(V_1/V_2)=\dim(V_1)-\dim(V_1\cap V_2)$.
  \item For a given prime power $q$ and positive integers $n$, $k$ with $k\leq n$, the \emph{Gaussian binomial coefficient} $\Gaussbinom{n}{k}_q$ is defined by
      \begin{flalign*}
      \Gaussbinom{n}{k}_q=\prod\limits_{i=0}^{k-1}\frac{q^{n-i}-1}{q^{k-i}-1}.
      \end{flalign*}
      Usually, the $q$ is omitted when it is clear.
  \item For a given family $\mathcal{F}$ in $\Gaussbinom{V}{k}$ and a $t$-dim subspace $U\subseteq V$, we denote $\mathcal{F}(U)=\{F\in \mathcal{F}: U\subseteq F\}$ as the subfamily in $\mathcal{F}$ containing $U$ and $\deg_{\mathcal{F}}(U)=|\mathcal{F}(U)|$ is called the degree of $U$ in $\mathcal{F}$. 

\end{itemize}

The remainder of the paper is organized as follows. In Section 2, we will introduce some basic notions and known results on general association schemes, representation theory of $S_n$ and spectra of Cayley graphs on $S_n$. Moreover, we also include some preliminary lemmas for the proof of our main results. In Section 3, we consider families of vector spaces and prove Theorem \ref{stabilityforsub}. In Section 4, we consider families of permutations and prove Theorem \ref{theremovallemma}. And we prove Theorem \ref{subspace_intersection} and Theorem \ref{lpbound} in Section 5. Finally, we will conclude the paper and discuss some remaining problems in Section 6.

\section{Preliminaries}

In this section, we will introduce some necessary notions and related results to support proofs of our theorems. First, we will introduce some notions about general association schemes, which are crucial for the proof of the upper bounds on $\mathcal{MI}(X,\mathcal{F})$ for $X=\Gaussbinom{V}{k}$ and $X=S_n$. Then, we shall give a brief introduction on the representation theory of $S_n$.  Finally, we will review some known results about spectra of Cayley graphs on $S_n$. Readers familiar with these parts are invited to skip corresponding subsections. Based on these results, we will provide some new estimations about eigenvalues of Cayley graphs on $S_n$ for the proof of Theorem \ref{theremovallemma}.

\subsection{Association schemes}\label{association_schemes}
Association scheme is one of the most important topics in algebraic combinatorics, coding theory, etc. Many questions concerning distance-regular graphs are best solved in this framework, see \cite{B1971},\cite{B1974}. In 1973, by performing linear programming methods on specific association schemes, Delsarte \cite{DP1973} proved many of the sharpest bounds on the size of a code, which demonstrated the power of association schemes in coding theory. Since then, association schemes have been widely studied and related notions have also been extended to other objects, such as equiangular lines and special codes, etc. In this subsection, we only include some basic notions about association schemes. For more details about association schemes, we recommend \cite{DP1973} and \cite{GC2016} as standard references.


Let $X$ be a finite set with $v$ $(v\ge 2)$ elements, and for any integer $s\ge 1$, let $\mathcal{R}=\{R_0,R_1,\ldots,R_s\}$ be a family of $s+1$ relations on $X$. The pair $(X,\mathcal{R})$ is called an association scheme with $s$ classes if the following three conditions are satisfied:
\begin{itemize}
  \item [1.] The set $\mathcal{R}$ is a partition of $X\times X$ and $R_0$ is the diagonal relation, i.e., $R_0=\{(x,x)|~x\in X\}$.
  \item [2.] For $i=0,1,\ldots,s$, the inverse $R_i^{-1}=\{(y,x)|~(x,y)\in R_i\}$ of the relation $R_i$ also belongs to $\mathcal{R}$.
  \item [3.] For any triple of integers $i,j,k\in\{ 0,1,\ldots,s\}$, there exists a number $p_{i,j}^{(k)}=p_{j,i}^{(k)}$ such that, for all $(x,y)\in R_k$:
       $$|\{z\in X|~(x,z)\in R_i,~(z,y)\in R_j\}|=p_{i,j}^{(k)}.$$
       And $p_{i,j}^{(k)}$s are called the $intersection$ $numbers$ of $(X,\mathcal{R})$.
\end{itemize}
For relation $R_i\in \mathcal{R}$, the adjacency matrix of $R_i$ is defined as follows:
\begin{flalign*}
A_{i}(x,y)=\left\{\begin{array}{ll}1,&~(x,y)\in R_i,\\0,&~(x,y)\notin R_i. \end{array}\right.
\end{flalign*}
The space consisting of all complex linear combinations of the matrices $\{A_0,\ldots,A_s\}$ in an association scheme $(X,\mathcal{R})$ is called a $Bose$-$Mesner$ $algebra$. Moreover, denote $J$ as the $v\times v$ matrix with all entries 1, there is a set of pairwise orthogonal idempotent matrices $\{B_0=\frac{J }{v},\ldots,B_s\}$, which forms another basis of this Bose-Mesner algebra. The relations between $\{A_r\}_{r=0}^s$ and $\{B_r\}_{r=0}^s$ are shown as follows:
\begin{flalign}\label{relationship}
A_i=\sum\limits_{j=0}^sP_{i}(j)B_j,~i=0,\ldots,s;~~~~~B_j=\frac{1}{v}\sum\limits_{i=0}^sQ_j(i)A_i, ~j=0,\ldots,s,
\end{flalign}
where $P_i(0),\ldots,P_i(s)$ are the eigenvalues of $A_i$, which are called the $eigenvalues$ of the association scheme; and $Q_j(i)$ are known as $dual$ $eigenvalues$ of the association scheme. Usually, $v_i:=P_i(0)$ denotes the number of 1's in each row of $A_i$ and $u_j:=Q_j(0)=tr(B_j)$. According to \cite{GC2016}, for $1\leq i,j\leq s$, $P_i(j)$s and $Q_{j}(i)$s have the following relation:
\begin{flalign}\label{relation_PQ}
\frac{\overline{P_i(j)}}{v_i}=\frac{Q_j(i)}{u_j}.
\end{flalign}

%



Let $\mathcal{R}=\{R_0,R_1,\ldots,R_s\}$ be a set of $s+1$ relations on $X$ of an association scheme. Given a subset $Y\subseteq X$ with $|Y|=M$, the $inner~distribution$ of $Y$ with respect to $\mathcal{R}$ is an $(s+1)$-tuple $\mathbf{a}=(a_0,\ldots,a_s)$ of nonnegative rational numbers $a_i$ $(0\leq i\leq s)$ given by
\begin{flalign}\label{inner_distribution}
a_i=\frac{|R_i\cap (Y\times Y)|}{M}.
\end{flalign}
Clearly, we have $a_0=1$ and $\sum_{i=0}^s a_i=|Y|$.

Moveover, let $\mathbf{u}$ be the indicator vector of $Y$ with respect to $X$, i.e., $\mathbf{u}_x=1$, if $x\in Y$ and $\mathbf{u}_x=0$, if $x\notin Y$. Then, $(\ref{inner_distribution})$ can be rewritten as
\begin{flalign}\label{another_form}
a_i=\frac{1}{M}\mathbf{u}A_i\mathbf{u}^{T}.
\end{flalign}
Besides, for $0\leq j\leq s$, define
\begin{flalign}\label{A_represent_B}
b_j=\frac{v}{M^2}\mathbf{u}B_j\mathbf{u}^{T},
\end{flalign}
and $\mathbf{b}=(b_0,\ldots,b_s)$ as the $dual$ $distribution$ of $Y$. By combining $(\ref{relationship})$ and $(\ref{A_represent_B})$ together, we have the following lemma which provides a linear relationship between $a_i$s and $b_{j}$s.
\begin{lemma}\label{B_represent_A}
Given an association scheme $(X,\mathcal{R})$ with $s$ classes and $|X|=v$. Let $Y\subseteq X$ with size $M$, then for $\{a_0,\ldots,a_s\}$ and $\{b_0,\ldots,b_s\}$ defined in $(\ref{inner_distribution})$ and $(\ref{A_represent_B})$ respectively,  we have
\begin{flalign*}
a_i=\frac{M}{v}\sum\limits_{j=0}^sb_jP_i(j), ~~i=0,1,\ldots,s.
\end{flalign*}
\end{lemma}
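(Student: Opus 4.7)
The statement is a direct algebraic consequence of the change-of-basis formula (\ref{relationship}) between the adjacency matrices $\{A_i\}$ and the primitive idempotents $\{B_j\}$ of the Bose--Mesner algebra, and the plan is simply to carry out the substitution explicitly on the indicator vector $\mathbf{u}$ of $Y$. I would start from the quadratic-form expression $a_i = \frac{1}{M}\mathbf{u}A_i\mathbf{u}^{T}$ given in (\ref{another_form}), expand $A_i = \sum_{j=0}^{s} P_i(j) B_j$ via (\ref{relationship}), and then replace each $\mathbf{u}B_j\mathbf{u}^{T}$ by $\frac{M^2}{v}b_j$ using the definition (\ref{A_represent_B}) of the dual distribution.

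Concretely, the chain
\[
a_i \;=\; \frac{1}{M}\mathbf{u}A_i\mathbf{u}^{T} \;=\; \frac{1}{M}\sum_{j=0}^{s}P_i(j)\,\mathbf{u}B_j\mathbf{u}^{T} \;=\; \frac{1}{M}\sum_{j=0}^{s}P_i(j)\cdot\frac{M^2}{v}b_j \;=\; \frac{M}{v}\sum_{j=0}^{s}b_j P_i(j)
\]
yields the claim, where the middle step uses linearity of the map $M\mapsto \mathbf{u}M\mathbf{u}^{T}$ on matrices, which lets the scalars $P_i(j)$ be pulled out of the form. There is no genuine obstacle: the lemma is essentially a tautology in the Bose--Mesner algebra once both $a_i$ and $b_j$ are written in their common quadratic-form shape. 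The only minor point I would record is that the quantities $\mathbf{u}B_j\mathbf{u}^{T}$ are real (indeed nonnegative, since each $B_j$ is a Hermitian positive semidefinite idempotent), so both sides of the identity land in $\mathbb{R}$ despite the $P_i(j)$'s and entries of $B_j$ being a priori complex, and the formula then holds for every $i\in\{0,1,\ldots,s\}$ simultaneously.
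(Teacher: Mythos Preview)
Your proof is correct and is exactly the approach the paper intends: the lemma is stated immediately after the remark that it follows by combining (\ref{relationship}) and (\ref{A_represent_B}), with no further proof given, and your chain of equalities is precisely that combination carried out explicitly via (\ref{another_form}).
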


As a consequence of Lemma \ref{B_represent_A}, we have the following properties of $\{b_j:0\leq j\leq s\}$.
\begin{lemma}(\cite{MS1977}, Theorem 12 in Section 6, Chapter 21)\label{B'_k}
Given an association scheme $(X,\mathcal{R})$ with $s$ classes and $|X|=v$. Let $Y\subseteq X$ with size $M$ and $\{b_0,\ldots,b_s\}$ be defined as $(\ref{A_represent_B})$, then $b_j\ge0$ for all $0\leq j\leq s$.
\end{lemma}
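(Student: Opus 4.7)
The plan is to exploit the defining property of the basis $\{B_0, B_1, \ldots, B_s\}$ of the Bose-Mesner algebra, namely that its members are pairwise orthogonal Hermitian idempotents, so that the quantity $\mathbf{u} B_j \mathbf{u}^T$ appears automatically as a squared norm. No combinatorial or spectral estimate is required; this is purely a linear-algebraic observation about orthogonal projections.

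First, I would recall from the structure theory of commutative association schemes that $B_0, \ldots, B_s$ form the unique basis of primitive pairwise orthogonal idempotents of the Bose-Mesner algebra, so that $B_j^2 = B_j$ and $B_j B_{j'} = 0$ for $j \neq j'$. Second, I would verify that each $B_j$ is Hermitian. The axiom $R_i^{-1} \in \mathcal{R}$ guarantees that the algebra spanned by $\{A_0, \ldots, A_s\}$ is closed under conjugate transpose (since $A_i^{*} = A_{i'}$ with $R_i^{-1} = R_{i'}$); therefore the orthogonal projections onto the common eigenspaces of this $*$-closed commutative algebra are themselves Hermitian, yielding $B_j^{*} = B_j$.

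Combining these two facts gives the factorisation $B_j = B_j \cdot B_j = B_j^{*} B_j$. Plugging in the real indicator vector $\mathbf{u}$ of $Y$, one obtains
\[
\mathbf{u} B_j \mathbf{u}^T \;=\; \mathbf{u}(B_j^{*} B_j)\mathbf{u}^T \;=\; (B_j \mathbf{u}^T)^{*}(B_j \mathbf{u}^T) \;=\; \|B_j \mathbf{u}^T\|^2 \;\geq\; 0,
\]
and, since $v/M^2 > 0$, multiplying through yields $b_j \geq 0$ for every $0 \leq j \leq s$.

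I do not anticipate any serious obstacle; the only point requiring care is the justification that the $B_j$ are Hermitian rather than merely idempotent, and this is handled by the $*$-closure of the Bose-Mesner algebra coming directly from the axiom on $R_i^{-1}$. Everything else is a one-line computation, so the proof should be extremely short. If one preferred to avoid invoking the $*$-algebra structure explicitly, an alternative would be to note that a Hermitian idempotent has spectrum in $\{0,1\}$, hence is positive semidefinite, and the same conclusion follows.
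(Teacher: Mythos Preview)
Your argument is correct and is exactly the standard proof: each $B_j$ is a Hermitian idempotent, hence $B_j = B_j^{*}B_j$, so $\mathbf{u}B_j\mathbf{u}^{T} = \|B_j\mathbf{u}^{T}\|^{2} \ge 0$. Note that the paper does not actually supply its own proof of this lemma; it merely cites \cite{MS1977} (Theorem~12, \S6, Chapter~21), where precisely this positive-semidefiniteness argument is given, so your proposal coincides with the intended proof.
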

\begin{lemma}\label{property_B'}
With the same conditions as those in Lemma \ref{B'_k}, for $\{b_0,\ldots,b_s\}$, we have
\begin{flalign}
b_0=1 \text{~and~} \sum\limits_{j=0}^s b_j=\frac{v}{M}.\label{sumb}
\end{flalign}
\end{lemma}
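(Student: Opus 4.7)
The plan is to derive both identities directly from the definition $b_j = \frac{v}{M^2}\mathbf{u}B_j\mathbf{u}^T$ by invoking two standard features of the primitive idempotents of the Bose-Mesner algebra: first, $B_0 = J/v$ (which was stated explicitly just before equation~(\ref{relationship})); second, $\sum_{j=0}^s B_j = I$, which holds because $\{B_0,\ldots,B_s\}$ is a complete system of pairwise orthogonal idempotents whose span contains $I = A_0$. Neither of these is hard to establish in the association scheme setup, so the proof should be essentially a two-line computation.

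For $b_0 = 1$, I would substitute $B_0 = J/v$ into the definition and use that $\mathbf{u}$ is a $0/1$ vector with exactly $M$ ones, so $\mathbf{u}J\mathbf{u}^T = M^2$. This yields
\[
b_0 = \frac{v}{M^2}\cdot\frac{1}{v}\,\mathbf{u}J\mathbf{u}^T = \frac{M^2}{M^2} = 1.
\]
For the summation identity, I would interchange the sum over $j$ with the quadratic form, apply $\sum_{j=0}^s B_j = I$, and again use $\mathbf{u}\mathbf{u}^T = M$:
\[
\sum_{j=0}^s b_j \;=\; \frac{v}{M^2}\,\mathbf{u}\Bigl(\sum_{j=0}^s B_j\Bigr)\mathbf{u}^T \;=\; \frac{v}{M^2}\,\mathbf{u}I\mathbf{u}^T \;=\; \frac{v}{M^2}\cdot M \;=\; \frac{v}{M}.
\]

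A parallel route, if one prefers to stay on the ``$a$-side'' and invoke only the previously proved Lemma~\ref{B_represent_A}, is the following. Since $A_0 = I$, equation~(\ref{relationship}) forces $P_0(j) = 1$ for all $j$, and since $\sum_i A_i = J = vB_0$, the same equation forces $\sum_i P_i(j) = v\,\delta_{j,0}$. Combining these with Lemma~\ref{B_represent_A} and the elementary observations $a_0 = 1$ and $\sum_i a_i = M$ recovers $\sum_j b_j = v/M$ and $b_0 = 1$, respectively. I would likely present the direct computation and mention this alternative only as a remark.

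There is no real obstacle here: the statement is a short bookkeeping lemma that simply records the normalization and total mass of the dual distribution, and both facts fall out immediately from $B_0 = J/v$ and the completeness relation $\sum_j B_j = I$. The only thing to be mildly careful about is making sure that the factor $v/M^2$ in the definition~(\ref{A_represent_B}) is not mistyped, since it is precisely this normalization that makes $b_0 = 1$ rather than some other constant.
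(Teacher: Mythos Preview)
Your proof is correct. The computation of $b_0=1$ via $B_0=J/v$ is exactly what the paper does. For the sum $\sum_j b_j = v/M$, however, the paper takes precisely the route you label as the ``alternative'': it invokes Lemma~\ref{B_represent_A} with $i=0$, using $a_0=1$ and $P_0(j)=1$ for all $j$. Your primary argument instead appeals directly to the completeness relation $\sum_{j=0}^s B_j = I$ and computes $\frac{v}{M^2}\mathbf{u}I\mathbf{u}^T = v/M$. Both arguments are equally short and standard; your approach is arguably more self-contained (it does not rely on the previous lemma), while the paper's has the minor advantage that $\sum_j B_j = I$ is not explicitly recorded in Section~\ref{association_schemes} and so would, strictly speaking, need a one-line justification.
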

\begin{proof}[Proof of Lemma \ref{property_B'}]
Since $B_0=J/v$, by the definition of $b_j$ in $(\ref{A_represent_B})$, we can obtain
\begin{flalign*}
b_0=\frac{1}{M^2}\mathbf{u}J\mathbf{u}^T=1.
\end{flalign*}
Note that $a_0=1$ and $P_0(j)=1$ for $0\leq j\leq s$, by taking $i=0$ in Lemma \ref{B_represent_A}, we can obtain
\begin{flalign*}
 \sum\limits_{j=0}^s b_j=\frac{v}{M}.
\end{flalign*}
\end{proof}

\subsection{Background on the representation theory of $S_n$}

A \emph{partition} of $n$ is a nonincreasing sequence of positive integers summing to $n$, i.e., a sequence $\lambda=(\lambda_1,\ldots,\lambda_l)$ with $\lambda_1\geq\cdots\geq\lambda_l$ and $\sum_{i=1}^{l}\lambda_i=n$, and we write $\lambda\vdash n$. The \emph{Young diagram} of $\lambda$ is an array of $n$ \emph{cells}, having $l$ left-justified rows, where row $i$ contains $\lambda_i$ cells. For example, the Young diagram of the partition $(3,2^2)$ is:
\begin{center}
    \begin{ytableau}
    \ \ & \ \ & \ \ \\
    \ \ & \ \ \\
    \ \ & \ \
    \end{ytableau}
\end{center}
If the array contains the numbers $\{1,\ldots,n\}$ in some order in place of dots, we call it \emph{$\lambda$-tableau}, for example,
\begin{center}
    \begin{ytableau}
    5 & 1 & 3 \\
    2 & 4 \\
    6 & 7 \\
    \end{ytableau}
\end{center}
is a $(3,2^2)$-tableau. Two $\lambda$-tableaux are said to be \emph{row-equivalent} if they have the same numbers in each row; if a $\lambda$-tableau $s$ has rows $R_1,\ldots,R_{l_1}\subseteq[n]$ and columns $C_1,\ldots,C_{l_2}\subseteq[n]$, then we let $R_s=S_{R_1}\times\cdots\times S_{R_{l_1}}$ be the row-stabilizer of $s$ and $C_s=S_{C_1}\times\cdots\times S_{C_{l_2}}$ be the column-stabilizer of $s$.

A \emph{$\lambda$-tabloid} is a $\lambda$-tableau with unordered row entries. Given a tableau $s$, denote $[s]$ as the tabloid it produces. For example, the $(3,2^2)$-tableau above produces the following $(3,2^2)$-tabloid:
\begin{align*}
&\{5~~1~~3 \} \\
&\{2~~4\}\\
&\{6~~7\}
\end{align*}
For given group $G$ and set $S$, denote $e$ as the identity in $G$. The left action of $G$ on $S$ is a function $G\times S\rightarrow S$ (denoted by $(g,x)\mapsto gx$) such that for all $x\in S$ and $g_1,g_2\in G:$
\begin{equation*}
ex=x~\text{and}~(g_1g_2)x=g_1(g_2x).
\end{equation*}
Now, consider the left action of $S_n$ on $X^{\lambda}$, the set of all $\lambda$-tabloids; let $M^{\lambda}=\mathbb{C}[X^{\lambda}]$ be the corresponding permutation module, i.e., the complex vector space with basis $X^{\lambda}$ and the action of $S_n$ on $\mathbb{C}[X^{\lambda}]$ linearly extended from the action of $S_n$ on $X^{\lambda}$. Given a $\lambda$-tableau $s$, the corresponding \emph{$\lambda$-polytabloid} is defined as
\begin{equation*}
e_s:=\sum_{\pi\in C_s}sgn(\pi)\pi[s].
\end{equation*}
We define the \emph{Specht module} $S^{\lambda}$ to be the submodule of $M^{\lambda}$ spanned by the $\lambda$-polytabloids:
\begin{equation*}
S^{\lambda}=\text{Span}\{e_s: s \text{~is a } \lambda\text{-tableau}\}.
\end{equation*}

As shown in \cite{EFP2011}, any irreducible representation $\rho$ of $S_n$ is isomorphic to some $S^{\lambda}$. This leads to a one to one correspondence between irreducible representations and partitions of $n$. In the following of this paper, for convenience, we shall write $[\lambda]$ for the equivalence class of the irreducible representation $S^{\lambda}$, $\chi_{\lambda}$ for the character $\chi_{S^{\lambda}}$ (The formal definition of the character of a representation will be presented in Section 2.3.1). 

Let $\lambda=(\lambda_1,\ldots,\lambda_{l_1})$ be a partition of $n$. If its Young diagram has columns of lengths $\lambda'_1\geq \lambda'_2\geq \cdots\geq \lambda'_{l_2}\geq 1$, then the partition $\lambda^{T}=(\lambda'_1,\dots,\lambda'_{l_2})$ is called the \emph{transpose} (or \emph{conjugate}) of $\lambda$. Consider each cell $(i,j)$ in the Young diagram of $\lambda$, the \emph{hook} of $(i,j)$ is $H_{i,j}=\{(i,j'):j'\geq j\}\cup\{(i',j):i'\geq i\}$. The \emph{hook length} of $(i,j)$ is $h_{i,j}=|H_{i,j}|$. As an important parameter, the dimension $\dim[\lambda]$ of the Specht module $S^{\lambda}$ is given by the following theorem:

\begin{theorem}(\cite{FRT1954})\label{dim_lambda}
If $\lambda$ is a partition of $n$ with hook lengths $(h_{i,j})$, then
\begin{equation}\label{eqdim_lambda}
\dim[\lambda]=\frac{n!}{\prod_{i,j}h_{i,j}}.
\end{equation}
\end{theorem}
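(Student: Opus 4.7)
The plan is to reduce the statement to a combinatorial counting problem about standard Young tableaux and then establish that count by a probabilistic hook walk argument. First I would show that $\dim[\lambda]$ equals $f^\lambda$, the number of standard $\lambda$-tableaux (fillings of the Young diagram with $1,\ldots,n$ strictly increasing along rows and columns). The key fact is that the polytabloids $\{e_s : s \text{ is a standard } \lambda\text{-tableau}\}$ form a basis of $S^\lambda$: linear independence comes from placing a suitable total order on $\lambda$-tabloids and observing that $[s]$ is the maximum tabloid appearing in $e_s$ when $s$ is standard, while spanning follows from the straightening lemma, which expresses $e_s$ for non-standard $s$ as an integer linear combination of $e_{s'}$ over standard $s'$ via Garnir relations.

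With this reduction, it remains to prove $f^\lambda = n!/\prod_{(i,j)\in\lambda} h_{i,j}(\lambda)$, which I would do by induction on $n$. Since in any standard $\lambda$-tableau the entry $n$ sits at an inner corner,
\begin{equation*}
f^\lambda = \sum_{c \text{ corner of } \lambda} f^{\lambda \setminus c}.
\end{equation*}
Writing $p_c := f^{\lambda\setminus c}/f^\lambda$ and invoking the inductive hypothesis for each $\lambda\setminus c$, proving the hook length formula for $\lambda$ reduces to showing
\begin{equation*}
p_c = \frac{1}{n}\prod_{(i,j)\in\lambda\setminus c}\frac{h_{i,j}(\lambda)}{h_{i,j}(\lambda\setminus c)}
\end{equation*}
for every corner $c$, and then summing over corners to conclude $\sum_c p_c = 1$. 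To establish this product formula for $p_c$, I would run the hook walk of Greene, Nijenhuis and Wilf: choose a uniformly random starting cell of $\lambda$, then repeatedly jump to a uniformly random cell in the strict hook of the current cell until reaching a corner. The central lemma is that this walk terminates at corner $c=(a,b)$ with probability exactly equal to the product above; one proves it by decomposing a walk according to the sets of non-starting rows and columns it visits and recognizing the resulting nested sum as a telescoping product through a partial-fractions identity.

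The step I expect to be most delicate is this hook walk identity itself: keeping track of the conditional probabilities of horizontal and vertical moves and matching the resulting sum to the clean ratio of hook lengths requires genuine combinatorial care, and the base case of a one-step walk must be handled separately. A backup route would be to bypass probability entirely and start from the Frobenius character formula, which yields the determinantal expression $\dim[\lambda] = n!\det\bigl(1/(\lambda_i - i + j)!\bigr)_{i,j}$, and then use row operations and a Vandermonde-type manipulation to rewrite the determinant as $1/\prod h_{i,j}$; this is algebraically slicker but obscures why hook lengths appear at all. Either route closes the induction and completes the proof.
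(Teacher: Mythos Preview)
Your proposal outlines a valid and well-known proof of the hook length formula, but you should be aware that the paper does not prove this theorem at all: it is simply quoted as a classical result with a citation to Frame, Robinson and Thrall (1954), and is used only as a tool to compute $\dim[(n-2,1,1)]$ and $\dim[(n-2,2)]$ later on. So there is no ``paper's own proof'' to compare against.

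That said, your sketch is sound. The reduction $\dim[\lambda]=f^\lambda$ via the standard polytabloid basis is the textbook route, and the Greene--Nijenhuis--Wilf hook walk is a legitimate way to finish. Note that this is \emph{not} the original 1954 argument being cited: Frame--Robinson--Thrall proceeded essentially along your ``backup route,'' deriving the determinantal formula $f^\lambda = n!\det\bigl(1/(\lambda_i-i+j)!\bigr)$ from the Young--Frobenius theory and then simplifying the determinant to the hook product. The hook walk proof came two decades later and is more combinatorial but, as you note, the telescoping identity requires care. Either approach would be acceptable here, and for the purposes of this paper a bare citation is all that is needed.
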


As an immediate consequence of Theorem \ref{dim_lambda}, we have $\dim[\lambda]=\dim[\lambda^{T}]$.

\subsection{Spectra of Cayley graphs on $S_{n}$}

\subsubsection{Basics and known results}

Given a group $G$ and an inverse-closed subset $X\subseteq G$, the \emph{Cayley graph} on $G$ generated by $X$, denoted by $Cay(G,X)$, is the graph with vertex-set $G$ and edge-set $\{\{u,v\}\in {G\choose 2}: uv^{-1}\in X\}$. Cayley graphs have been studied for many years and are a class of the most important structures in algebraic graph theory. Here, we only consider a very special kind of Cayley graphs where $G=S_{n}$ and $X$ is a union of conjugacy classes.

For fixed $k\geq 1$, consider the Cayley graph $\Gamma_{k}$ on $S_n$ with generating set
$$FPF_{k}=\{\sigma\in S_{n}:\sigma \text{~has less than~}k\text{~fixed points}\}.$$
When $k=1$, the corresponding Cayley graph $\Gamma_{1}$ is also called the \emph{derangement graph} on $S_n$.

For $i,j\in[n]$, denote $\mathcal{C}_{i\rightarrow j}$ as the coset consisting of permutations $\sigma\in S_{n}$ with $\sigma(i)=j$. In \cite{EFP2011}, by taking $FPF_{k}$ as a union of conjugacy classes, the authors used the representation theory of $S_{n}$ and obtained the following results about the eigenvalues of $\Gamma_{k}$:
\begin{equation}
\lambda_{\rho}^{(k)}=\frac{1}{\dim[\rho]}\sum_{\sigma\in{FPF_k}}\chi_{\rho}(\sigma)~~~(\rho\vdash n),
\end{equation}
where the character $\chi_{\rho}$ of $\rho$ is the map defined by
\begin{align*}
\chi_{\rho} &:S_{n}\rightarrow \mathbb{C},\\
\chi_{\rho}(\sigma) &=Tr(\rho(\sigma)),
\end{align*}
and $Tr(\rho(\sigma))$ denotes the trace of the linear map of $\rho(\sigma)$. If there is no confusion, for a partition $\rho$ of $n$, we also use the notation $\rho$ to denote the corresponding \emph{irreducible representation} of $S_{n}$ (For this correspondence, see Theorem 14 in \cite{EFP2011}.).

Let $d_{n}=|FPF_{1}(n)|$ be the number of derangements in $S_{n}$, using the inclusion-exclusion formula, we have
\begin{equation*}
d_{n}=\sum_{i=0}^{n}{(-1)}^{i}{n\choose i}(n-i)!=\sum_{i=0}^{n}(-1)^{i}\frac{n!}{i!}=\left(\frac{1}{e}+o(1)\right)\cdot{n!}.
\end{equation*}
From \cite{EFP2011}, we know that for $n\geq 5$, the eigenvalues of $\Gamma_{1}$ satisfy:
\begin{align}\label{eigenvalue1}
\lambda_{(n)}^{(1)}&=d_n, \nonumber \\
\lambda_{(n-1,1)}^{(1)} &=-\frac{d_n}{(n-1)},\\
|\lambda_{\rho}^{(1)}| &<\frac{c\cdot d_{n}}{n^2}<\frac{d_{n}}{(n-1)} ~\text{for all other}~ \rho\vdash n, \nonumber
\end{align}
where $c$ is an absolute constant. And the eigenvalues of $\Gamma_{k}$ satisfy:
\begin{align}\label{eigenvalue2}
\lambda_{(n)}^{(k)}&=\sum_{i=0}^{k-1}\big[{n\choose i}\cdot d_{n-i}\big], \nonumber \\
\lambda_{(n-1,1)}^{(k)} &=\frac{1}{(n-1)}\cdot\sum_{i=0}^{k-1}\big[{n\choose i}\cdot d_{n-i}\cdot(i-1)\big],\\
|\lambda_{\rho}^{(k)}| &<\frac{c_{k}\cdot n!}{n^2} ~\text{for all other}~ \rho\vdash n, \nonumber
\end{align}
where $c_k>0$ depends on $k$ alone. As shown in \cite{EFF2017}, for $\lambda_{\rho}^{(k)}$s with different $k$s and the same $\rho$, their corresponding eigenspaces are the same $U_{\rho}$ with dimension $\dim[\rho]$. For each $t\in \mathbb{N}$, define
\begin{equation*}
U_t=\bigoplus_{\rho\vdash n: \rho_1\geq n-t}U_{\rho}.
\end{equation*}
It was proved in \cite{EFP2011} that $U_t$ is the linear span of the characteristic functions of the $t$-cosets of $S_n$, i.e.,
\begin{equation*}
U_t=Span\{\mathcal{C}_{I\rightarrow J}:I,J \text{~are ordered~} t\text{-tuples of distinct elements of~}[n]\},
\end{equation*}
where for $I=\{i_1,\ldots,i_t\}$ and $J=\{j_1,\ldots,j_t\}$, $\mathcal{C}_{I\rightarrow J}=\{\sigma\in S_n: \sigma(i_1)=j_1,\ldots,\sigma(i_t)=j_t\}$ is a $t$-coset of $S_n$. Moreover, write $V_t=\bigoplus_{\rho\vdash n: \rho_1=n-t}U_{\rho}$. Clearly, $V_t$s are pairwise orthogonal and
\begin{equation}\label{eigenspace}
U_t=U_{t-1}\bigoplus V_t.
\end{equation}


During their study of intersecting families for permutations in \cite{EFP2011}, Ellis, Friedgut and Pilpel developed several tools to estimate the spectra of $\Gamma_k$s, we include the following three lemmas which are useful for our estimations of the eigenvalues of $\Gamma_k$s.
\begin{lemma}\label{bound_eigenvalue}(\cite{EFP2011}, Lemma 6)
Let $G$ be a finite group, let $X\subseteq G$ be inverse-closed and conjugation-invariant, and let $Cay(G,X)$ be the Cayley graph on $G$ with generating set $X$. Let $\rho$ be an irreducible representation of $G$ with dimension $d$, and let $\lambda_{\rho}$ be the corresponding eigenvalue of $Cay(G,X)$. Then
\begin{equation}\label{ineqbound_eigenvalue}
|\lambda_{\rho}|\leq\frac{\sqrt{|G||X|}}{d}.
\end{equation}
\end{lemma}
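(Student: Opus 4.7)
The eigenvalue in question admits the explicit character formula
\begin{equation*}
\lambda_\rho = \frac{1}{d}\sum_{\sigma\in X}\chi_\rho(\sigma),
\end{equation*}
which holds because $X$ is inverse-closed and conjugation-invariant: the element $\frac{1}{|X|}\sum_{\sigma\in X}\sigma$ of the group algebra $\mathbb{C}[G]$ is central, so Schur's lemma implies it acts on every copy of $\rho$ as a scalar, and taking traces identifies that scalar. This formula appears in the excerpt already for the special case $G=S_n$, $X=FPF_k$, and the argument is identical for a general finite group $G$ and a general inverse-closed, conjugation-invariant $X$. My plan is to start by writing this formula down (or citing it) as the first line of the proof.

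Once the character formula is in hand, the bound is an immediate application of the Cauchy--Schwarz inequality together with the column orthogonality relation for irreducible characters. Specifically, I would write
\begin{equation*}
\Bigl|\sum_{\sigma\in X}\chi_\rho(\sigma)\Bigr|^{2}
\le |X|\sum_{\sigma\in X}|\chi_\rho(\sigma)|^{2}
\le |X|\sum_{\sigma\in G}|\chi_\rho(\sigma)|^{2},
\end{equation*}
and then invoke the orthonormality of irreducible characters in the form $\sum_{\sigma\in G}|\chi_\rho(\sigma)|^{2}=|G|$. Combining these two displays with the character formula and taking square roots yields $|\lambda_\rho|\le \sqrt{|G||X|}/d$, which is exactly \eqref{ineqbound_eigenvalue}.

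There is essentially no obstacle here; the only small point to verify carefully is that the character formula for $\lambda_\rho$ really does apply in the stated generality. I would therefore spend one short paragraph justifying it, noting that the conjugation-invariance of $X$ is exactly what makes $\sum_{\sigma\in X}\rho(\sigma)$ a scalar multiple of the identity on the representation space of $\rho$ (by Schur's lemma), and that taking the trace of this scalar matrix identifies the scalar as $\frac{1}{d}\sum_{\sigma\in X}\chi_\rho(\sigma)$. After that, the two-line Cauchy--Schwarz-plus-orthogonality calculation above completes the proof.
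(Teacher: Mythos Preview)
Your proof is correct and is exactly the standard argument. Note, however, that the paper does not actually prove this lemma: it is quoted verbatim from \cite{EFP2011} (their Lemma 6) without proof, so there is no ``paper's own proof'' to compare against. Your argument---the character formula for $\lambda_\rho$ via Schur's lemma, followed by Cauchy--Schwarz and the orthogonality relation $\sum_{\sigma\in G}|\chi_\rho(\sigma)|^2=|G|$---is precisely the proof given in \cite{EFP2011}, so in that sense you have reproduced the intended argument.
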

\begin{lemma}\label{bound_dim1}(\cite{EFP2011}, Claim 1 in Section 3.2.1)
Let $[\rho]$ be an irreducible representation whose first row or column is of length $n-t$. Then
\begin{equation}\label{ineqbound_dim1}
\dim{[\rho]}\geq {n\choose t}e^{-t}.
\end{equation}
\end{lemma}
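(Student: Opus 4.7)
The plan is to apply the hook length formula (Theorem~\ref{dim_lambda}) together with the standard reduction via $\dim[\rho]=\dim[\rho^T]$, which allows me to assume without loss of generality that the first row of $\rho$ has length $n-t$. Write $\rho=(n-t,\mu)$ for some partition $\mu\vdash t$ with $\mu_1\le n-t$, and split the cells of the Young diagram of $\rho$ into those lying in the first row and those lying in rows of index $\ge 2$. The products of hook lengths over these two parts can then be handled separately.

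For rows of index $i\ge 2$, the key observation is that a hook in $\rho$ never extends upward, so $h_{i,j}=h^{\mu}_{i-1,j}$, where $h^{\mu}$ denotes the hook length in the diagram of $\mu$. Another application of the hook length formula, this time to $\mu$, yields
\[
\prod_{(i,j)\in\rho,\,i\ge 2}h_{i,j}=\frac{t!}{\dim[\mu]}.
\]
For the first row, separating the columns $j\le\mu_1$ (those with at least one cell beneath them) from the columns $j>\mu_1$ gives
\[
\prod_{j=1}^{n-t}h_{1,j}=\prod_{j=1}^{\mu_1}\bigl((n-t-j+1)+\mu'_j\bigr)\cdot(n-t-\mu_1)!,
\]
where $\mu'_j$ is the $j$-th column length of $\mu$. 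Substituting into Theorem~\ref{dim_lambda} and using the factorisation $(n-t)!=(n-t-\mu_1)!\prod_{j=1}^{\mu_1}(n-t-j+1)$, I arrive at the clean identity
\[
\frac{\dim[\rho]}{{n\choose t}}=\dim[\mu]\prod_{j=1}^{\mu_1}\frac{n-t-j+1}{n-t-j+1+\mu'_j}.
\]

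To finish, I would apply $1+x\le e^x$ termwise to bound the denominator, and observe that $\sum_{j=1}^{\mu_1}\mu'_j/(n-t-j+1)\le\sum_{j=1}^{\mu_1}\mu'_j=t$, since $n-t-j+1\ge 1$ for every $j\le\mu_1\le n-t$. Combined with the trivial bound $\dim[\mu]\ge 1$, this yields $\dim[\rho]\ge{n\choose t}e^{-t}$. I do not anticipate any significant obstacle; the only delicate point is the hook-length bookkeeping, in particular the identification $h_{i,j}=h^{\mu}_{i-1,j}$ for $i\ge 2$, which I would verify on a couple of small examples (the hook shape $(n-t,1^t)$ and the two-row shape $(n-t,t)$) before writing out the full argument.
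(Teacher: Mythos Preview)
Your argument is correct. The hook-length bookkeeping is accurate: for $i\ge 2$ the hook at $(i,j)$ in $\rho$ lies entirely within rows $\ge 2$, giving $h_{i,j}^{\rho}=h_{i-1,j}^{\mu}$, and the first-row hooks are exactly $(n-t-j+1)+\mu'_j$. The resulting identity
\[
\dim[\rho]=\binom{n}{t}\dim[\mu]\prod_{j=1}^{\mu_1}\frac{n-t-j+1}{\,n-t-j+1+\mu'_j\,}
\]
is standard, and your final estimate via $1+x\le e^{x}$ together with $\sum_j \mu'_j=t$ and $n-t-j+1\ge 1$ (valid because $\mu_1\le n-t$) gives the claimed bound.

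There is nothing to compare against in this paper: Lemma~\ref{bound_dim1} is quoted verbatim from \cite{EFP2011} and no proof is supplied here. Your derivation is essentially the one given in \cite{EFP2011} as well, so the approaches coincide.
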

\begin{theorem}\label{bound_dim2}(\cite{Mishchenko07})
If $\alpha,\epsilon>0$, then there exists $N(\alpha,\epsilon)\in \mathbb{N}$ such that for all $n>N(\alpha,\epsilon)$, any irreducible representation $[\lambda]$ of $S_n$ which has all rows and columns of length at most $\frac{n}{\alpha}$ has
\begin{equation}\label{ineqbound_dim2}
\dim{[\lambda]}\geq (\alpha-\epsilon)^{n}.
\end{equation}
\end{theorem}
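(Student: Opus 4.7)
The plan is to invoke the Frame--Robinson--Thrall hook length formula (\ref{eqdim_lambda}) and reduce the question to an upper bound on $\prod_{(i,j)\in\lambda} h_{i,j}$. A cell $(i,j)$ has $h_{i,j} = (\lambda_i - j) + (\lambda^T_j - i) + 1 \le \lambda_1 + \lambda^T_1 - 1$, so the row/column length hypothesis gives the uniform bound $h_{i,j} \le 2n/\alpha$. Inserted into the hook formula together with Stirling's approximation, this yields only the crude estimate $\dim[\lambda] \ge (\alpha/(2e))^n\cdot \mathrm{poly}(n)$, which falls short of the target by a multiplicative factor of $2e$ in the base; so the entire point of the argument is to replace this pointwise bound by a much sharper average one.

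To obtain the sharper $(\alpha-\epsilon)^n$, I would pass to a continuous approximation of the Young diagram. Rescale the indices $i,j$ by $1/n$ so that $\lambda$ becomes a region $\Omega \subset \mathbb{R}^2_{\ge 0}$ with side lengths at most $1/\alpha$ and area $1/n$, and write
\begin{equation*}
\log \dim[\lambda] \;=\; \log n! - \sum_{(i,j)\in\lambda} \log h_{i,j} \;=\; n\log n - n + o(n) - \sum_{(i,j)\in\lambda} \log h_{i,j}.
\end{equation*}
After extracting the $n \log n$ contribution from the sum of hook logs (whose leading order is controlled by the $n$-dependence of the hook lengths), the remaining quantity reduces to an integral $\iint_\Omega \log(\tilde h(x,y))\, dx\, dy$ over the rescaled shape, where $\tilde h$ is the continuous analogue of the hook function. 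Convexity of $\log$, together with a direct comparison against the benchmark rectangular shape $\lambda^\ast = ((n/\alpha)^{\alpha})$ whose dimension can be computed exactly via (\ref{eqdim_lambda}) and is of order $\alpha^n$, then yields the sought lower bound $\dim[\lambda] \ge (\alpha-\epsilon)^n$ for $n$ large enough relative to $\alpha$ and $\epsilon$.

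The main obstacle will be making the continuous approximation tight enough to absorb the factor $2e$ into an arbitrarily small \emph{additive} correction to $\alpha$ in the base, rather than merely a multiplicative improvement of the constant. A cleaner alternative worth pursuing is an inductive branching argument: the restriction rule $\mathrm{Res}^{S_n}_{S_{n-1}}[\lambda] = \bigoplus_{\lambda^-} [\lambda^-]$, summed over partitions $\lambda^-$ of $n-1$ obtained by removing a single corner cell from $\lambda$, gives $\dim[\lambda] = \sum_{\lambda^-} \dim[\lambda^-]$. If one can show that every such $\lambda$ admits at least $\alpha - o(1)$ removable corners whose resulting $\lambda^-$ still satisfy the row/column length hypothesis for $n-1$ with a slightly smaller $\alpha'$, an induction on $n$ produces the claimed exponential rate directly, bypassing the analytic estimates entirely. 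The delicate point of this alternative route is choosing corners so that no single row or column of $\lambda$ is eroded too fast, which is where the bound on \emph{both} row and column lengths becomes essential.
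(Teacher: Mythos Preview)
The paper does not prove this theorem at all: Theorem~\ref{bound_dim2} is quoted verbatim from the cited reference \cite{Mishchenko07} and used only as a black box in the proof of Lemma~\ref{lem1}. There is therefore no ``paper's own proof'' against which to compare your proposal.

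As for the proposal itself, it is a sketch of two possible strategies rather than a proof, and you are candid about this. A concrete gap worth flagging is in the branching alternative: the claim that every admissible $\lambda$ has at least $\alpha - o(1)$ removable corners is false as stated. The rectangular partition $\lambda = ((n/\alpha)^{\alpha})$ sits squarely inside the hypothesis (all rows and columns have length at most $n/\alpha$) yet has exactly one removable corner, not $\alpha$. So a single branching step cannot produce the factor $\alpha$; any induction of this type would have to track how corners proliferate over many consecutive removals and average the branching factor along paths, which is a substantially harder bookkeeping problem than what you describe. The continuous hook-integral approach is closer in spirit to what is actually done in the literature on asymptotic representation theory of $S_n$, but as you note, turning the heuristic into a proof that recovers the sharp base $\alpha$ (rather than $\alpha/(2e)$ or similar) is exactly the nontrivial content of the cited result.
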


The above three lemmas provide a way to control $|\lambda_{\rho}^{(k)}|$ based on the dimension of $[\rho]$. When the structure of the partition $\rho$ is relatively simple, $[\rho]$'s dimension can be well bounded and therefore leads to a good control of $|\lambda_{\rho}^{(k)}|$. When the dimension of $[\rho]$ is relatively large, this method no longer works. Thus, we need the following results from \cite{KLW2015} and \cite{KLW2017}.

\begin{theorem}\label{eigenvalue3}(\cite{KLW2015}, Theorem 3.7)
Let $0<k<n$ and $\rho\vdash n$. Let $\mu_1,\ldots,\mu_q$ be the Young diagram obtained from $\rho$ by removing the right most box from any row of the diagram so that the resulting diagram is still a partition of $(n-1)$. Then
\begin{equation}\label{ineq_eigenvalue3}
\lambda_{\rho}^{(k)}=\frac{n}{k\dim[\rho]}\sum_{j=1}^{q}\dim[\mu_j]\lambda_{\mu_j}^{(k-1)}.
\end{equation}
\end{theorem}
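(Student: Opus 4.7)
The plan is to derive the recursion via a character-theoretic computation, combining the eigenvalue formula $\lambda_\nu^{(j)} = \frac{1}{\dim[\nu]}\sum_{\sigma\in FPF_j}\chi_\nu(\sigma)$ stated earlier with the branching rule for the restriction $S_n\downarrow S_{n-1}$.

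First I would unfold the right-hand side. Substituting $\dim[\mu_j]\lambda_{\mu_j}^{(k-1)} = \sum_{\tau\in FPF_{k-1}(n-1)}\chi_{\mu_j}(\tau)$ and interchanging the order of summation turns it into $\sum_{\tau}\sum_{j}\chi_{\mu_j}(\tau)$. The branching rule says exactly that $\chi_\rho|_{S_{n-1}}=\sum_{j=1}^q \chi_{\mu_j}$, with the $\mu_j$ being the partitions of $n-1$ obtained from $\rho$ by deleting one removable corner cell; so the inner sum collapses to $\chi_\rho(\tau)$, where $\tau$ is viewed as an element of $S_{n-1}\cong \operatorname{Stab}_{S_n}(n)\subset S_n$. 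Thus the right-hand side of the claimed identity becomes $\frac{n}{k\dim[\rho]}\sum_{\tau\in FPF_{k-1}(n-1)}\chi_\rho(\tau)$.

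Next I would use the $S_n$-conjugation invariance of the class function $\chi_\rho$ together with the transitive action of $S_n$ on $[n]$. For each $i\in [n]$ the sum $\sum_{\sigma\in FPF_k(n),\,\sigma(i)=i}\chi_\rho(\sigma)$ is independent of $i$ (conjugation by the transposition $(i\ n)$ supplies a character-preserving bijection between fibers), and when $i=n$ it equals $\sum_{\tau\in FPF_{k-1}(n-1)}\chi_\rho(\tau)$. Summing over $i$ and swapping the order of summation then gives
\[
n\sum_{\tau\in FPF_{k-1}(n-1)}\chi_\rho(\tau) \;=\; \sum_{\sigma\in FPF_k(n)} f(\sigma)\,\chi_\rho(\sigma),
\]
where $f(\sigma)$ denotes the number of fixed points of $\sigma$.

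The remaining, and most delicate, step is to match this weighted sum with $k\sum_{\sigma\in FPF_k(n)}\chi_\rho(\sigma)$, from which the stated recursion follows. Since the sums range over the proper subset $FPF_k(n)\subsetneq S_n$, column orthogonality of irreducible characters is not directly available. My plan would be to stratify $FPF_k(n)$ by the exact number of fixed points $m\in\{0,1,\ldots,k-1\}$, identify the stratum $\{\sigma:f(\sigma)=m\}$ with $\binom{[n]}{m}\times D_{n-m}$, and on each stratum apply the Murnaghan--Nakayama rule to strip off the rim hooks corresponding to the fixed points. This expresses $\chi_\rho$-values on each stratum as sums of $\chi_{\mu_j}$-values on smaller symmetric groups and, after accounting for the multiplicities $\binom{n}{m}$ and the dimension ratios $\dim[\mu_j]/\dim[\rho]$, should produce the precise factor $k$. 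Carrying out this telescoping cleanly, without the luxury of summing over all of $S_n$, is the main obstacle.
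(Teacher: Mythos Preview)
Your overall strategy---branching rule plus double counting over fixed points via conjugation invariance---is exactly the right one, and it is essentially the argument in \cite{KLW2015}. But you have misread the convention, and this is why your ``remaining, delicate step'' looks hard: it is in fact either trivial or false, depending on which convention is in force.

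The theorem is quoted verbatim from \cite{KLW2015}, where $\lambda_\rho^{(k)}$ denotes the eigenvalue of the Cayley graph on $S_n$ generated by permutations with \emph{exactly} $k$ fixed points, not by $FPF_k=\{\sigma: f(\sigma)<k\}$ as the surrounding text of the present paper defines $\Gamma_k$. Under the ``exactly $k$'' convention your computation goes through verbatim up to the line
\[
n\sum_{\tau\in E_{k-1}(n-1)}\chi_\rho(\tau)=\sum_{\sigma\in E_k(n)} f(\sigma)\,\chi_\rho(\sigma),
\]
and then the last step is immediate: every $\sigma$ in the right-hand sum satisfies $f(\sigma)=k$, so the weighted sum equals $k\sum_{\sigma\in E_k(n)}\chi_\rho(\sigma)=k\dim[\rho]\lambda_\rho^{(k)}$. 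No stratification by fixed-point count and no Murnaghan--Nakayama argument is needed.

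By contrast, under the ``fewer than $k$'' reading you assumed, the stated recursion is simply false. For $\rho=(n)$ and $k=2$ on $S_5$ one has $\lambda_{(5)}^{(2)}=d_5+5d_4=89$, while the right-hand side would be $\tfrac{5}{2}\,d_4=22.5$. So the obstacle you identify is real \emph{for that convention}, but it cannot be overcome because the identity itself does not hold there; once the correct convention is restored the obstacle disappears.
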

\begin{theorem}\label{eivenvalue4}(\cite{KLW2017}, Theorem 3.5)
Let $n,k$ be integers with $n>k\geq 0$, and $\rho=(n)\vdash n$. Then
\begin{equation}\label{ineq_eigenvalue4}
\lambda_{\rho^{T}}^{(k)}={n\choose k}(-1)^{n-k-1}(n-k-1).
\end{equation}
\end{theorem}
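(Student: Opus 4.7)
The plan is to combine the recurrence of Theorem \ref{eigenvalue3} with a direct base-case computation. First I observe that $\rho^T = (1^n)$ is a single column of $n$ boxes, so the Specht module $S^{(1^n)}$ is the sign representation with $\dim[(1^n)]=1$ and $\chi_{(1^n)} = \sgn$. Among all ways to strip a rightmost box from the Young diagram of $(1^n)$, the only one that leaves a valid partition of $n-1$ is deleting the bottom box, giving the unique predecessor $\mu = (1^{n-1})$. Hence Theorem \ref{eigenvalue3} collapses to the single-term recurrence
\begin{equation*}
\lambda_{(1^n)}^{(k)} \;=\; \frac{n}{k}\,\lambda_{(1^{n-1})}^{(k-1)},
\end{equation*}
and iterating it $k$ times telescopes the product $\tfrac{n}{k}\cdot\tfrac{n-1}{k-1}\cdots\tfrac{n-k+1}{1}$ to $\binom{n}{k}$, yielding $\lambda_{(1^n)}^{(k)} = \binom{n}{k}\,\lambda_{(1^{n-k})}^{(0)}$.

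Next I would evaluate the base case $\lambda_{(1^m)}^{(0)}$ with $m := n-k \ge 1$. Because the sign representation has dimension one, this eigenvalue is just the sum $s_m := \sum \sgn(\sigma)$ taken over the relevant generating set for $S_m$, namely the derangements. To compute $s_m$ I would use inclusion-exclusion over the fixed-point set: writing $\mathbf{1}[\sigma\text{ is a derangement}] = \sum_{I \subseteq [m]} (-1)^{|I|}\,\mathbf{1}[\sigma|_I = \mathrm{id}]$ and interchanging the two summations gives
\begin{equation*}
s_m \;=\; \sum_{i=0}^{m}(-1)^i \binom{m}{i}\,T_{m-i}, \qquad T_r := \sum_{\pi \in S_r}\sgn(\pi).
\end{equation*}
Since $T_r = 0$ for $r \ge 2$ and $T_r = 1$ for $r \in \{0,1\}$, only the terms $i = m-1$ and $i = m$ survive, contributing $(-1)^{m-1}m + (-1)^{m} = (-1)^{m-1}(m-1)$. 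Substituting back,
\begin{equation*}
\lambda_{\rho^T}^{(k)} \;=\; \binom{n}{k}\,s_{n-k} \;=\; \binom{n}{k}(-1)^{n-k-1}(n-k-1),
\end{equation*}
as required.

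The main delicate point is the first step: one needs to verify carefully that $(1^n)$ really has only one predecessor under the ``remove a rightmost box'' operation (removing a box from any row above the bottom produces a zero in the middle of a weakly decreasing sequence and is not allowed), so that the recursion in Theorem \ref{eigenvalue3} collapses cleanly to a single term rather than a branching tree. A secondary bookkeeping issue is making sure that the convention of $\lambda_\rho^{(0)}$ used in the recurrence matches the convention employed when computing $s_{n-k}$ as a sum of signs over derangements, so that the base case and the iterated recurrence fit together without an off-by-one shift.
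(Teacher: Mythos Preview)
The paper does not prove this statement: Theorem~\ref{eivenvalue4} is quoted from \cite{KLW2017} and used as a black box in Lemma~\ref{lem1}, so there is nothing here to compare your argument against.

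Your proof is correct. The column shape $(1^n)$ has a unique removable cell, so Theorem~\ref{eigenvalue3} collapses to a single term and telescopes to $\binom{n}{k}$ times the base case, and your inclusion--exclusion computation of $\sum_{\sigma\text{ derangement}}\sgn(\sigma)=(-1)^{m-1}(m-1)$ is also correct. On the convention worry you flag at the end: you are right to be careful, because the superscript~$(k)$ in Theorems~\ref{eigenvalue3}--\ref{eigenvalue6} (all imported from \cite{KLW2015,KLW2017}) refers to the Cayley graph on permutations with \emph{exactly} $k$ fixed points, which differs from the paper's own $FPF_k$ (fewer than $k$) used earlier in the section. Under the KLW reading $\lambda_{\rho}^{(0)}$ is indeed the sum of $\chi_\rho$ over derangements, exactly as you assumed; a quick sanity check is that for the trivial representation this gives $\lambda_{(n)}^{(k)}=\binom{n}{k}d_{n-k}$, and then $\tfrac{n}{k}\lambda_{(n-1)}^{(k-1)}=\tfrac{n}{k}\binom{n-1}{k-1}d_{n-k}=\binom{n}{k}d_{n-k}$, so the recurrence and your base case fit together without any off-by-one shift.
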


For positive integers $n_1$ and $n_2$, we write $n_1=O_{t}(n_2)$ if $n_1\leq C_tn_2$ for some constant $C_t$ that depends only on $t$.
\begin{theorem}\label{eigenvalue5}(\cite{KLW2017}, Theorem 3.9)
Let $n,k,t$ be integers with $k\geq 0$, $t>0$ and $n>k+2t$, $\rho=(n-t,\rho_2,\ldots,\rho_r)\vdash n$ with $\sum_{i=2}^{r}\rho_i=t$, and $\beta=(\rho_2,\ldots,\rho_r)\vdash t$. Then
\begin{equation}\label{ineq_eigenvalue5}
\dim[\rho]\lambda_{\rho}^{(k)}=\dim[\beta]{n\choose k}\left(\sum_{r=0}^{t}{k\choose r}\frac{(-1)^{t-r}}{(t-r)!}\right)d_{n-k}+O_{t}(n^{2t-1+k}).
\end{equation}
\end{theorem}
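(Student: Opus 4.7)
My plan is to prove the claim by induction on $k\ge 0$, feeding the recursion of Theorem~\ref{eigenvalue3} back into itself. The key preparatory observation is that the coefficient $\sum_{r=0}^{t}\binom{k}{r}\frac{(-1)^{t-r}}{(t-r)!}$ is precisely $[x^t]\bigl((1+x)^k e^{-x}\bigr)$, the coefficient of $x^t$ in $(1+x)^k e^{-x}$; this generating-function reformulation is what will make the inductive step collapse cleanly.

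For the inductive step at level $k\ge 1$, apply Theorem~\ref{eigenvalue3} to $\rho=(n-t,\beta)$. Because $n-t$ is much larger than $\beta_1$, the removable boxes of $\rho$ split into exactly two types: a unique type-(a) box at the right end of the first row, giving $\mu=(n-t-1,\beta)$; and type-(b) boxes at the removable corners of $\beta$, each giving $\mu=(n-t,\beta')$ with $\beta'\vdash t-1$. The recursion reads
\begin{equation*}
\dim[\rho]\lambda_{\rho}^{(k)} = \frac{n}{k}\Bigl(\dim[(n-t-1,\beta)]\,\lambda_{(n-t-1,\beta)}^{(k-1)} + \sum_{\beta'}\dim[(n-t,\beta')]\,\lambda_{(n-t,\beta')}^{(k-1)}\Bigr).
\end{equation*}
The inductive hypothesis at $(n-1,k-1,t)$ gives type-(a) main contribution $\dim[\beta]\binom{n-1}{k-1}[x^t]\bigl((1+x)^{k-1}e^{-x}\bigr)d_{n-k}$, and each $\beta'$-term, controlled by induction at $(n-1,k-1,t-1)$, contributes $\dim[\beta']\binom{n-1}{k-1}[x^{t-1}]\bigl((1+x)^{k-1}e^{-x}\bigr)d_{n-k}$. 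The branching rule for restriction from $S_t$ to $S_{t-1}$ gives $\sum_{\beta'}\dim[\beta']=\dim[\beta]$, consolidating the $\beta'$-sum. Multiplying by $\frac{n}{k}$ and using $\frac{n}{k}\binom{n-1}{k-1}=\binom{n}{k}$, the combined main term becomes $\dim[\beta]\binom{n}{k}\bigl([x^t]+[x^{t-1}]\bigr)\bigl((1+x)^{k-1}e^{-x}\bigr)d_{n-k}$, and the Pascal-type identity $[x^t]((1+x)g)=[x^t]g+[x^{t-1}]g$ applied to $g=(1+x)^{k-1}e^{-x}$ collapses this precisely to the claimed $\dim[\beta]\binom{n}{k}[x^t]\bigl((1+x)^k e^{-x}\bigr)d_{n-k}$.

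The base case $k=0$ is the classical derangement-graph character sum $\dim[\rho]\lambda_\rho^{(0)}=\sum_{\sigma\in\mathrm{Der}(S_n)}\chi_\rho(\sigma)$. I would handle it by inclusion-exclusion on fixed-point sets combined with the Frobenius-reciprocity identity $\sum_{\tau\in S_{n-s}}\chi_\rho(\tau)=(n-s)!\,f^{\rho/(n-s)}$, where $f^{\rho/(n-s)}$ counts standard Young tableaux of the skew shape $\rho/(n-s)$. This yields
\begin{equation*}
\dim[\rho]\lambda_\rho^{(0)} = \sum_{s=t}^{n}(-1)^s\frac{n!}{s!}f^{\rho/(n-s)}.
\end{equation*}
In the clean range $t\le s\le n-\beta_1$ the detached first-row cells of $\rho/(n-s)$ are disconnected from $\beta$, and a direct count (or the Aitken--Jacobi--Trudi determinantal formula with its natural block structure) gives $f^{\rho/(n-s)}=\binom{s}{t}\dim[\beta]$. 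Summing this clean portion reproduces $\dim[\beta]\frac{(-1)^t}{t!}\cdot\frac{n!}{(n-\beta_1-t)!}d_{n-\beta_1-t}$, which matches the target main term $\dim[\beta]\frac{(-1)^t}{t!}d_n$ up to a tail of the derangement series of controlled size.

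The principal obstacle is error bookkeeping. Inductively, the type-(a) error contributes $\frac{n}{k}\cdot O_t((n-1)^{2t+k-2})=O_t(n^{2t-1+k})$, which matches the claim exactly, and each of the $O_t(1)$ type-(b) errors $\frac{n}{k}\cdot O_{t-1}((n-1)^{2t+k-4})$ is strictly subleading. The subtler piece is the boundary range $s>n-\beta_1$ of the base case, where the first-row cells overlap columns of $\beta$ and the disconnected-shape count fails; here one expands the Aitken--Jacobi--Trudi determinant along the first row, verifies that the dominant term still yields $\binom{s}{t}\dim[\beta]$ (off-diagonal minors involve factorials of order $n$ and are hence negligible), and bounds the boundary contribution by $\sum_{r=0}^{\beta_1-1} O_t(n^r)\cdot O_t(n^t)=O_t(n^{2t-1})$. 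The dimension bounds in Lemma~\ref{bound_dim1} and Theorem~\ref{bound_dim2} are on hand as backups for any delicate hook-length estimate.
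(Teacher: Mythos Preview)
The paper does not supply a proof of this statement: Theorem~\ref{eigenvalue5} is quoted verbatim from \cite{KLW2017} (Theorem~3.9 there) and is used as a black box in the proof of Lemma~\ref{lem1}. So there is nothing in the present paper to compare your argument against.

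That said, your plan is essentially the argument of \cite{KLW2017}: induct on $k$ via the recursion of Theorem~\ref{eigenvalue3}, split the removable boxes of $\rho=(n-t,\beta)$ into the first-row corner and the corners of $\beta$, use the branching identity $\sum_{\beta'}\dim[\beta']=\dim[\beta]$ to recombine, and verify the Pascal-type recursion for the coefficient $[x^t]\bigl((1+x)^k e^{-x}\bigr)$. The error propagation you describe is correct: the type-(a) error dominates and reproduces $O_t(n^{2t-1+k})$ exactly.

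One point of caution on the base case. Under \emph{this} paper's convention, $FPF_0=\{\sigma:\sigma\text{ has fewer than }0\text{ fixed points}\}=\emptyset$, so $\lambda_\rho^{(0)}=0$ identically; your identification of $\dim[\rho]\lambda_\rho^{(0)}$ with the derangement character sum is then false here, and indeed the theorem as stated would fail at $k=0$ (the main term $\dim[\beta]\frac{(-1)^t}{t!}d_n$ is of order $n!$, not $O_t(n^{2t-1})$). What is going on is a silent indexing shift: the cited results from \cite{KLW2015,KLW2017} use the convention that the $k$-th graph has generating set $\{\sigma:\sigma\text{ has at most }k\text{ fixed points}\}$, so their $\lambda^{(0)}$ is the derangement eigenvalue (this paper's $\lambda^{(1)}$). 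Your base-case computation is correct under that convention, which is the one in which the theorem is actually stated and proved in \cite{KLW2017}; just be aware that it does not literally match the definition of $FPF_k$ given earlier in the present paper. If you want the argument to live entirely inside this paper's notation, anchor the induction at $k=1$ instead and run your inclusion--exclusion computation for $\sum_{\sigma\in\mathrm{Der}(S_n)}\chi_\rho(\sigma)$ there.
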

Let $n,k,t$ be integers with $0\leq k<n$ and $0\le 2t<n$. We define $V(n,t)=\{\rho\vdash n: \rho=(n-t,\rho_2,\ldots,\rho_l)\text{ with }\sum_{i=2}^{l}\rho_i=t\}$, and we also need the following lemma.
\begin{lemma}\label{eigenvalue6}(\cite{KLW2017}, Lemma 3.16)
Let $t\geq 0$ and $\rho\in V(n,t)$. Then
\begin{equation}\label{ineq_eigenvalue6}
\lambda_{\rho^{T}}^{(k)}=O_{t}(n^{k+1}).
\end{equation}
\end{lemma}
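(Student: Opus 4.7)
The plan is a double induction: an outer induction on $t$ with the base case supplied by Theorem \ref{eivenvalue4}, combined with an inner induction on $k$ built on the recursion of Theorem \ref{eigenvalue3}. The essential idea is to cancel the awkward factor $\dim[\rho^T]$ appearing in that recursion against the sum of child dimensions using the branching rule for the symmetric group.

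For the outer base case $t=0$, the only partition in $V(n,0)$ is $(n)$, whose transpose is $(1^n)$. By Theorem \ref{eivenvalue4}, $|\lambda_{(1^n)}^{(k)}|=\binom{n}{k}(n-k-1)\le n^{k+1}$, so the claim holds with $C_0 = 1$. For $t\ge 1$ assume the statement for $t-1$, and prove it for $t$ by induction on $k$. The base case $k=0$ is trivial since $FPF_0=\emptyset$, so every eigenvalue of $\Gamma_0$ is zero.

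For the inductive step, fix $\rho=(n-t,\rho_2,\ldots,\rho_l)\in V(n,t)$ and apply Theorem \ref{eigenvalue3} to $\rho^T$:
$$\lambda_{\rho^T}^{(k)}=\frac{n}{k\dim[\rho^T]}\sum_{j=1}^{q}\dim[\mu_j]\,\lambda_{\mu_j}^{(k-1)},$$
where $\mu_1,\ldots,\mu_q$ are the partitions of $n-1$ obtained by deleting a corner of $\rho^T$. The key structural observation is that corners of $\rho^T$ correspond, via the transpose map, to corners of $\rho$. The corners of $\rho$ fall into two types: removing the unique corner in row $1$ yields $(n-t-1,\rho_2,\ldots,\rho_l)\in V(n-1,t)$, while removing a corner in row $i\ge 2$ yields a partition in $V(n-1,t-1)$. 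Hence each $\mu_j$ equals $\tau_j^T$ for some $\tau_j\in V(n-1,t)\cup V(n-1,t-1)$, and both inductive hypotheses (inner for $V(n-1,t)$, outer for $V(n-1,t-1)$) yield $|\lambda_{\mu_j}^{(k-1)}|\le C_t n^{k}$ for a suitable constant $C_t\ge C_{t-1}$.

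Plugging in and invoking the branching identity $\sum_{j=1}^{q}\dim[\mu_j]=\dim[\rho^T]$ produces
$$|\lambda_{\rho^T}^{(k)}|\le \frac{n}{k\dim[\rho^T]}\cdot C_t n^{k}\cdot \dim[\rho^T]=\frac{C_t n^{k+1}}{k}\le C_t n^{k+1},$$
which closes both inductions. The main obstacle, and the reason one cannot simply apply Lemma \ref{bound_eigenvalue}, is that the crude estimate from that lemma combined with the dimension lower bound in Lemma \ref{bound_dim1} gives only a bound of order $n!/\binom{n}{t}$, which is far weaker than $n^{k+1}$ whenever $n-t$ is large. The recursive approach succeeds precisely because the branching rule forces the child dimensions to sum to $\dim[\rho^T]$ exactly, so the troublesome dimension factor cancels and each level of the recursion costs only one additional power of $n$ rather than a factor related to $\dim[\rho^T]$.
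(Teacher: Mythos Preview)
The paper does not supply its own proof of this lemma; it is quoted verbatim from \cite{KLW2017} and used as a black box. So there is no in-paper argument to compare against.

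That said, your proof is correct and is in fact essentially the argument of the cited source. A couple of minor remarks. First, your inner induction on $k$ must be understood as a statement quantified over all ambient sizes: ``for every $m$ with $2t<m$ and every $\rho\in V(m,t)$, $|\lambda_{\rho^T}^{(k-1)}|\le C_t m^{k}$''; otherwise the hypothesis would not apply to $V(n-1,t)$. You use it this way implicitly, but it is worth saying so. Second, the observation that $\rho\in V(n,t)$ always has a removable corner in its first row relies on the standing hypothesis $2t<n$, which forces $\rho_1=n-t>t\ge\rho_2$; this is what guarantees that exactly one child lands in $V(n-1,t)$ while the remaining children land in $V(n-1,t-1)$. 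With these two points made explicit, the double induction closes cleanly, and indeed your bound shows that the implicit constant can be taken to be $1$ for every $t$.
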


\subsubsection{Some new results about $\lambda_{\rho}^{(k)}$s}

In this part, first, we shall prove two identities of the linear combinations of $\lambda_{(n)}^{(k)}$s. Then, using aforementioned results, we will provide some new estimations about $|\lambda_{\rho}^{(k)}|$ for $\rho\vdash n$ and $\rho \neq (n),(n-1,1)$.

Based on the formula of $d_{n}$, we have the following simple identity.
\begin{proposition}\label{ob1}
For any positive integer $n\geq5$, we have
\begin{equation}\label{eq_ob1}
\sum_{k=2}^{n}\sum_{i=1}^{k-1}\frac{1}{(i-1)!}\cdot\left(\sum_{s=0}^{n-i}\frac{(-1)^{s}}{s!}\right)=n-2.
\end{equation}
\end{proposition}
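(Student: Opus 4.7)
The plan is to recognize the inner sum $\sum_{s=0}^{n-i}\frac{(-1)^s}{s!}$ as $d_{n-i}/(n-i)!$, where $d_m$ denotes the number of derangements of $[m]$, and then swap the order of summation. For a fixed index $i\in\{1,\dots,n-1\}$, the outer index $k$ ranges over $\{i+1,\dots,n\}$, contributing a factor of $n-i$. After the swap and a single reindexing $\ell=n-i-1$, the double sum simplifies to
\begin{equation*}
\frac{1}{(n-2)!}\sum_{\ell=0}^{n-2}\binom{n-2}{\ell}d_{\ell+1}.
\end{equation*}

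The identity therefore reduces to showing $\sum_{\ell=0}^{n-2}\binom{n-2}{\ell}d_{\ell+1}=(n-2)\cdot(n-2)!$. I would prove this by a short bijective argument: each term $\binom{n-2}{\ell}d_{\ell+1}$ counts permutations of $[n-1]$ whose set of non-fixed points is some $(\ell+1)$-subset of $[n-1]$ containing the element $n-1$ (choose the other $\ell$ non-fixed points from $[n-2]$, then take a derangement on this subset, and extend by the identity on the complement). Summing over $\ell$ enumerates exactly those $\sigma\in S_{n-1}$ with $\sigma(n-1)\neq n-1$, of which there are $(n-1)!-(n-2)!=(n-2)\cdot(n-2)!$. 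Substituting back yields the claimed value $n-2$. As a sanity check, the case $n=5$ evaluates to $3/8+17/24+23/24+23/24=3$, matching $n-2$.

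A purely analytic alternative to the bijection would be to read off the coefficient of $x^{n-2}/(n-2)!$ in the binomial convolution $e^x\cdot\frac{d}{dx}\left(\frac{e^{-x}}{1-x}\right)=\frac{x}{(1-x)^2}=\sum_{m\geq 1}m\,x^m$, which immediately gives $(n-2)\cdot(n-2)!$. I do not anticipate any real obstacle; the only care needed is the bookkeeping in the index swap and in checking that the range $\ell\in\{0,\dots,n-2\}$ correctly corresponds to the original $i\in\{1,\dots,n-1\}$, which the triangular region $1\leq i<k\leq n$ enforces automatically.
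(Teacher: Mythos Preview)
Your proof is correct. Both you and the paper begin by swapping the order of summation; after that your paths diverge. The paper stays purely analytic: it sets $a_m=\sum_{s=0}^m(-1)^s/s!$, observes that $A(x)=\sum a_mx^m=e^{-x}/(1-x)$, and then computes the generating function of $b_N=\sum_{i=0}^{N}\frac{N-i+1}{i!}a_{N-i+1}$ as $e^{x}\cdot\frac{1}{x}\cdot xA'(x)=x/(1-x)^2$, reading off $b_{n-2}=n-2$. Your main argument instead packages the swapped sum as $\frac{1}{(n-2)!}\sum_{\ell}\binom{n-2}{\ell}d_{\ell+1}$ and evaluates this combinatorially by counting permutations of $[n-1]$ not fixing $n-1$; this is more elementary and gives a clean conceptual reason for the value $(n-2)\cdot(n-2)!$. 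The analytic alternative you sketch (the EGF convolution $e^{x}\cdot\frac{d}{dx}\bigl(e^{-x}/(1-x)\bigr)=x/(1-x)^2$) is essentially the paper's proof, just with the derivative taken before rather than after multiplying by $x$. Either route is fine; your bijection is a nice addition that the paper does not have.
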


\begin{proof}
First, by interchanging the summation order of the LHS of (\ref{eq_ob1}), we have
\begin{align*}
\sum_{k=2}^{n}\sum_{i=1}^{k-1}\frac{1}{(i-1)!}\cdot\left(\sum_{s=0}^{n-i}\frac{(-1)^{s}}{s!}\right)&=\sum_{k=2}^{n}\sum_{i=0}^{k-2}\frac{1}{i!}\cdot\left(\sum_{s=0}^{n-i-1}\frac{(-1)^{s}}{s!}\right)\\
&=\sum_{i=0}^{n-2}\frac{1}{i!}\cdot\left(\sum_{k=i+2}^{n}\sum_{s=0}^{n-i-1}\frac{(-1)^{s}}{s!}\right)\\
&=\sum_{i=0}^{n-2}\frac{n-i-1}{i!}\cdot\left(\sum_{s=0}^{n-i-1}\frac{(-1)^{s}}{s!}\right).
\end{align*}

Now, let $a_m=\sum_{s=0}^{m}\frac{(-1)^s}{s!}$ and $A(x)$ be the generating function $\sum_{m\geq 0}a_mx^m$ of sequence $\{a_m\}_{m\geq 0}$. Then, we have $A(x)=\frac{e^{-x}}{1-x}$ and
\begin{equation*}
\sum_{m\geq 0}ma_mx^m=\left(\frac{e^{-x}}{1-x}\right)^{'}x=\frac{e^{-x}x^2}{(1-x)^2}.
\end{equation*}
Let $b_n=\sum_{i=0}^{n}\frac{n-i+1}{i!}a_{n-i+1}$ and $B(x)$ be the generating function $\sum_{n\geq 0}b_nx^n$ of sequence $\{b_n\}_{n\geq 0}$. From the above equality and the property of products of generating functions, we immediately have
\begin{equation*}
B(x)=e^{x}\cdot \frac{\sum_{m\geq 0}ma_mx^m}{x}=\frac{x}{(1-x)^2}=\sum_{n\geq 1}nx^{n}.
\end{equation*}
Therefore, $b_n=n$ and
\begin{equation*}
\sum_{k=2}^{n}\sum_{i=1}^{k-1}\frac{1}{(i-1)!}\cdot\left(\sum_{s=0}^{n-i}\frac{(-1)^{s}}{s!}\right)=\sum_{i=0}^{n-2}\frac{n-i-1}{i!}\cdot\left(\sum_{s=0}^{n-i-1}\frac{(-1)^{s}}{s!}\right)=b_{n-2}=n-2.
\end{equation*}
This completes the proof of (\ref{eq_ob1}).
\end{proof}

As an application of Proposition \ref{ob1}, we can prove the following lemma.
\begin{lemma}\label{cor1}
For any integer $n\geq 5$,
\begin{align}
&\sum_{k=1}^{n}(\lambda_{(n)}^{(k)}+(n-1)\cdot\lambda_{(n-1,1)}^{(k)})=n!\cdot(n-2), \label{id5}\\
&\sum_{k=1}^{n}(\lambda_{(n)}^{(k)}-\lambda_{(n-1,1)}^{(k)})=n!\cdot\left(n-\frac{n-2}{n-1}\right).\label{id6}
\end{align}
\end{lemma}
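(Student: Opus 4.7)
The plan is to reduce both identities to the explicit formulas (\ref{eigenvalue2}) for $\lambda_{(n)}^{(k)}$ and $\lambda_{(n-1,1)}^{(k)}$, swap the order of the double sum over $k$ and $i$ (so that $\sum_{k=1}^{n}\sum_{i=0}^{k-1}f(i)=\sum_{i=0}^{n-1}(n-i)f(i)$), and then recognize the resulting expressions.

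For (\ref{id5}), I would first combine the two eigenvalue formulas. Since the coefficient of $\binom{n}{i}d_{n-i}$ in $\lambda_{(n)}^{(k)}+(n-1)\lambda_{(n-1,1)}^{(k)}$ is $1+(i-1)=i$, we get
$$\lambda_{(n)}^{(k)}+(n-1)\lambda_{(n-1,1)}^{(k)}=\sum_{i=0}^{k-1}i\binom{n}{i}d_{n-i}.$$
Summing over $k\in[n]$ and discarding the vanishing $i=0$ terms leaves $\sum_{k=2}^{n}\sum_{i=1}^{k-1}i\binom{n}{i}d_{n-i}$. Expanding $d_{n-i}=(n-i)!\sum_{s=0}^{n-i}(-1)^s/s!$ and noting $i\binom{n}{i}/n!=1/((i-1)!(n-i)!)$, each summand equals $n!\cdot\frac{1}{(i-1)!}\sum_{s=0}^{n-i}(-1)^s/s!$, so the total is exactly $n!$ times the double sum evaluated in Proposition \ref{ob1}, which equals $n!(n-2)$.

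For (\ref{id6}) I would evaluate $\sum_{k=1}^{n}\lambda_{(n)}^{(k)}$ and $\sum_{k=1}^{n}\lambda_{(n-1,1)}^{(k)}$ separately by the same swap-of-summation trick. This reduces both sums to linear combinations of the moments
$$\sum_{i=0}^{n}\binom{n}{i}d_{n-i}=n!,\qquad \sum_{i=0}^{n}i\binom{n}{i}d_{n-i}=n!,\qquad \sum_{i=0}^{n}i^2\binom{n}{i}d_{n-i}=2n!,$$
all obtained by observing that $\binom{n}{i}d_{n-i}$ counts the $\sigma\in S_n$ with exactly $i$ fixed points, and double-counting pairs $(\sigma,p)$ and triples $(\sigma,p,q)$ with $\sigma(p)=p$ (and $\sigma(q)=q$). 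A direct computation then gives $\sum_k\lambda_{(n)}^{(k)}=n\cdot n!-n!=(n-1)n!$ and, using the expansion $(n-i)(i-1)=-n+(n+1)i-i^2$, also $\sum_k\lambda_{(n-1,1)}^{(k)}=-n!/(n-1)$. Subtracting produces exactly $n!\bigl(n-\tfrac{n-2}{n-1}\bigr)$.

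The whole argument is just careful bookkeeping; the only place requiring attention is matching the form $\tfrac{1}{(i-1)!}\sum_{s}(-1)^s/s!$ that appears in Proposition \ref{ob1} with the algebraic expression produced by the eigenvalue formulas. Once the substitution $d_{n-i}/(n-i)!=\sum_{s=0}^{n-i}(-1)^s/s!$ is made, the match is immediate, and no further nontrivial ingredient is needed beyond Proposition \ref{ob1} and the three elementary fixed-point moments.
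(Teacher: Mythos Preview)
Your argument is correct. For (\ref{id5}) you follow exactly the paper's route: combine (\ref{eigenvalue2}) to get $\sum_{i=0}^{k-1}i\binom{n}{i}d_{n-i}$, rewrite each term as $n!\cdot\frac{1}{(i-1)!}\sum_{s=0}^{n-i}(-1)^s/s!$, and invoke Proposition~\ref{ob1}.

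For (\ref{id6}) your approach differs from the paper's. The paper rewrites $\lambda_{(n)}^{(k)}-\lambda_{(n-1,1)}^{(k)}$ as $\frac{n}{n-1}\sum_{i}\binom{n}{i}d_{n-i}-\frac{1}{n-1}\sum_{i}i\binom{n}{i}d_{n-i}$, handles the second piece via the already proved (\ref{id5}), and for the first piece appeals again to a generating-function identity in the spirit of Proposition~\ref{ob1} (the quantity called ``$M_n$'' there). You instead swap summations to get weights $(n-i)$ and $(n-i)(i-1)$, and then evaluate everything via the three fixed-point moments $\sum_i\binom{n}{i}d_{n-i}=n!$, $\sum_i i\binom{n}{i}d_{n-i}=n!$, $\sum_i i^2\binom{n}{i}d_{n-i}=2n!$, which are immediate by double counting. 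Your route is more self-contained: it bypasses the second application of the Proposition~\ref{ob1} machinery and replaces it with three one-line combinatorial identities. The paper's route, by contrast, keeps both identities on the same analytic footing. Both are valid; yours is slightly more elementary for (\ref{id6}).
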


\begin{proof}

By (\ref{eigenvalue2}), we have
\begin{align*}
\lambda_{(n)}^{(k)}+(n-1)\cdot\lambda_{(n-1,1)}^{(k)}&=\sum_{i=0}^{k-1}{n\choose i}\cdot i\cdot d_{n-i}\\
&=0+n!\cdot \sum_{i=1}^{k-1}\frac{1}{(i-1)!}\cdot(\sum_{s=0}^{n-i}\frac{(-1)^{s}}{s!}),
\end{align*}
where the second term $n!\cdot\sum_{i=1}^{k-1}\frac{1}{(i-1)!}\cdot(\sum_{s=0}^{n-i}\frac{(-1)^{s}}{s!})$ in the RHS of the above equality equals $0$ when $k\leq 1$. Thus the identity (\ref{id5}) follows from Lemma \ref{ob1}.

Similarly, by (\ref{eigenvalue2}), we have
\begin{align*}
\lambda_{(n)}^{(k)}-\lambda_{(n-1,1)}^{(k)}&=\sum_{i=0}^{k-1}{n\choose i}\cdot \left(1-\frac{i-1}{n-1}\right)\cdot d_{n-i}\\
&=\frac{n}{n-1}\cdot \sum_{i=0}^{k-1}{n\choose i}\cdot d_{n-i}-\frac{1}{n-1}\cdot \sum_{i=0}^{k-1}{n\choose i}\cdot i\cdot d_{n-i}.
\end{align*}
Therefore,
\begin{align*}
\sum_{k=1}^{n}(\lambda_{(n)}^{(k)}-\lambda_{(n-1,1)}^{(k)})&=\sum_{k=1}^{n}\left(\frac{n}{n-1}\cdot \sum_{i=0}^{k-1}{n\choose i}\cdot d_{n-i}-\frac{1}{n-1}\cdot \sum_{i=0}^{k-1}{n\choose i}\cdot i\cdot d_{n-i}\right)\\
&=n!\cdot\frac{n}{n-1}\cdot\sum_{k=1}^{n}\sum_{i=0}^{k-1}\frac{1}{i!}\cdot \left(\sum_{s=0}^{n-i}\frac{(-1)^{s}}{s!}\right)-n!\cdot\frac{n-2}{n-1}.
\end{align*}
From the definition of $M_n$ in Proposition \ref{ob1}, we have $\sum_{k=1}^{n}\sum_{i=0}^{k-1}\frac{1}{i!}\cdot (\sum_{s=0}^{n-i}\frac{(-1)^{s}}{s!})=M_n$. Thus we have
\begin{align*}
\sum_{k=1}^{n}(\lambda_{(n)}^{(k)}-\lambda_{(n-1,1)}^{(k)})=n!\cdot\left(n-\frac{n-2}{n-1}\right).
\end{align*}
This completes the proof.
\end{proof}

Denote $\Phi=\{\rho\vdash n:\rho\neq (n),(n-1,1)\}$. According to (\ref{eigenvalue2}), for fixed $k$ and $\rho\neq (n),(n-1,1)$, we have $|\lambda_{\rho}^{(k)}|\leq \frac{c_{k}\cdot n!}{n^2}$. However, this bound is not good enough. When the index $k$ varies from $1$ to $n$, the constant $c_k$ might become relatively large. Thus, if we try to get similar identities as (\ref{id5}) and (\ref{id6}) for $\lambda_{\rho}^{(k)}$ with $\rho\in \Phi$, we need some more delicate evaluations about $\lambda_{\rho}^{(k)}$s for $\rho\in \Phi$.

According to the structure of their corresponding partitions, we can divide $\rho\in \Phi$ into the following four parts:
\begin{align*}
\Phi_1=&\{\rho\in {\Phi}: \text{the first row or column of } \rho \text{ is of length at most } n-3\};\\
\Phi_2=&\{(n)^{T}\};\\
\Phi_3=&\{(n-2,1,1),(n-2,2)\};\\
\Phi_4=&\{(n-1,1)^{T},(n-2,1,1)^{T},(n-2,2)^{T}\}.
\end{align*}
Clearly, $\Phi$ are formed by these four parts and all of them are pairwise disjoint. Based on the known results, we can prove the following bounds about $\lambda_{\rho}^{(k)}$s for $\rho\in \Phi$.

\begin{lemma}\label{lem1}
Let $n,k,t$ be positive integers with $n$ sufficiently large. Then,
\begin{itemize}
  \item When $\rho\in \Phi_1\sqcup\Phi_2$, we have $|\lambda_{\rho}^{(k)}|\leq\frac{7e^3 n!}{n^3}$ for all $1\leq k\leq n$.
  \item When $\rho\in \Phi_3$, we have $\lambda_{\rho}^{(k)}\geq-\frac{c_0n!}{n^3}$ for $3\leq k\leq n-\frac{n}{\ln{n}}-7$, where $c_0$ is an absolute constant; and $|\lambda_{\rho}^{(k)}|\leq \frac{3n!}{n^2}$ for $k=1,2$ or $k> n-\frac{n}{\ln{n}}-7$.
  \item When $\rho\in \Phi_4$, we have $|\lambda_{\rho}^{(k)}|\leq\frac{n!}{n^3}$ for $1\leq k\leq n-\frac{n}{\ln{n}}-7$; and $|\lambda_{\rho}^{(k)}|\leq \frac{3n!}{n^2}$ for $k> n-\frac{n}{\ln{n}}-7$.
\end{itemize}
\end{lemma}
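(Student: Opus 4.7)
The plan is to split into four cases, one for each $\Phi_i$, and apply different tools to each. For $\rho \in \Phi_1$, writing the lengths of the first row and first column as $n-t_1$ and $n-t_2$, we have $t_1, t_2 \geq 3$ (by the definition of $\Phi_1$) and $t_1 + t_2 \geq n-1$ (from the $(1,1)$-hook). I would apply Lemma \ref{bound_dim1} with the better of $t_1, t_2$, noting that a short unimodality argument on $f(t) = \binom{n}{t}/e^t$ shows $\max\{f(t_1), f(t_2)\} \geq f(3) = \binom{n}{3}/e^3 \geq n^3/(7e^3)$ for large $n$, the minimum being essentially realized at $(t_1,t_2) = (3, n-4)$. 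Combining with Lemma \ref{bound_eigenvalue} and $|FPF_k| \leq n!$ then gives $|\lambda_\rho^{(k)}| \leq n!/\dim[\rho] \leq 7e^3 n!/n^3$, uniformly in $k$. For the singleton $\Phi_2 = \{(n)^T\}$, Theorem \ref{eivenvalue4} directly gives $|\lambda_\rho^{(k)}| \leq n\binom{n}{k} \leq n \cdot 2^n \ll 7e^3 n!/n^3$.

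For $\rho \in \Phi_3$ and $3 \leq k \leq n - n/\ln n - 7$, I would apply Theorem \ref{eigenvalue5} with $t = 2$. The inner sum collapses to $(k^2 - 3k + 1)/2$, which is positive for $k \geq 3$, so the main term of $\dim[\rho]\lambda_\rho^{(k)}$ is nonnegative and only the error $O(n^{k+3})$ can make $\lambda_\rho^{(k)}$ negative. Dividing by $\dim[\rho] = \Theta(n^2)$ yields $\lambda_\rho^{(k)} \geq -O(n^{k+1})$, and a Stirling estimate shows $n^{k+1} \leq C n!/n^3$ precisely in the range $k \leq n - n/\ln n - 7$, giving the desired $\lambda_\rho^{(k)} \geq -c_0 n!/n^3$. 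For the complementary range $k \in \{1,2\}$ or $k > n - n/\ln n - 7$, the hook-length formula gives $\dim[\rho] \geq (n-1)(n-2)/2 \geq n^2/3$ for $\rho \in \Phi_3$ and large $n$, so Lemma \ref{bound_eigenvalue} yields $|\lambda_\rho^{(k)}| \leq n!/\dim[\rho] \leq 3 n!/n^2$. Note that for $k \in \{1, 2\}$ the quantity $(k^2-3k+1)/2$ is negative, which is precisely why the stronger lower bound must fail there.

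For $\rho \in \Phi_4$, observe that $\rho^T \in V(n, t)$ with $t \in \{1, 2\}$, so Lemma \ref{eigenvalue6} yields $\lambda_\rho^{(k)} = O_t(n^{k+1})$; the same Stirling comparison as above gives $|\lambda_\rho^{(k)}| \leq n!/n^3$ whenever $k \leq n - n/\ln n - 7$. The boundary $k > n - n/\ln n - 7$ is subtler, since Lemma \ref{bound_eigenvalue} is too weak for $(n-1,1)^T$ (whose dimension is only $n-1$). I would instead use $\sum_{\sigma \in S_n}\chi_\rho(\sigma) = 0$ for nontrivial $\rho$ to write $\lambda_\rho^{(k)} = -\dim[\rho]^{-1}\sum_{\sigma : \mathrm{fp}(\sigma) \geq k}\chi_\rho(\sigma)$, and the trivial character bound $|\chi_\rho(\sigma)| \leq \dim[\rho]$ then gives $|\lambda_\rho^{(k)}| \leq \sum_{i=0}^{n-k}\binom{n}{i}d_i$. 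Since $n-k \leq n/\ln n + 7$, the right-hand side is at most $n^{n-k + O(1)} \leq e^n \cdot n^{O(1)}$, which is far less than $3n!/n^2$ for large $n$.

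The main technical obstacle is the Stirling calibration fixing the precise cutoff $k = n - n/\ln n - 7$: one needs $n^{k+1} \leq O(n!/n^3)$, which via $\ln n! \sim n\ln n - n$ forces $n - k - 4 \gtrsim n/\ln n$. A secondary point is verifying that the implicit constants in the $O_t(\cdot)$ notation of Theorem \ref{eigenvalue5} and Lemma \ref{eigenvalue6} are absolute once $t$ is confined to $\{1, 2\}$, so that the explicit constants $7e^3$, $c_0$, $3$, and $1$ in the statement can be realized uniformly in $n$.
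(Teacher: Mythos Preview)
Your argument for $\Phi_1$ has a genuine gap. You claim that unimodality of $f(t)=\binom{n}{t}e^{-t}$ together with $t_1,t_2\ge 3$ and $t_1+t_2\ge n-1$ forces $\max\{f(t_1),f(t_2)\}\ge f(3)$, with the minimum ``essentially realised at $(3,n-4)$''. This is false. Take $n=m^2$ and $\rho$ the $m\times m$ square diagram; then $t_1=t_2=n-\sqrt{n}$, the constraints are satisfied, yet
\[
f(n-\sqrt{n})=\binom{n}{\sqrt{n}}\,e^{-(n-\sqrt{n})}\le (e\sqrt{n})^{\sqrt{n}}e^{-(n-\sqrt{n})}=e^{2\sqrt{n}-n}\,n^{\sqrt{n}/2}\longrightarrow 0,
\]
which is far below $f(3)\sim n^3/(6e^3)$. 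More generally, whenever both $t_1,t_2$ lie to the right of the peak $t^*\approx n/(e+1)$, Lemma~\ref{bound_dim1} can give a vanishingly small lower bound on $\dim[\rho]$ even though the true dimension is enormous. The paper handles this by splitting: for $3\le t\le n/3$ it uses Lemma~\ref{bound_dim1} exactly as you do, but for $t>n/3$ (equivalently, when both the first row and the first column have length at most $2n/3$) it invokes Theorem~\ref{bound_dim2} to get $\dim[\rho]\ge(3/2-\epsilon)^n$, which is exponentially large. Your argument needs this second ingredient.

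The remainder of your plan matches the paper's treatment of $\Phi_2$ and $\Phi_3$. For $\Phi_4$ in the range $k>n-n/\ln n-7$ you take a genuinely different route: instead of the paper's recursion via Theorem~\ref{eigenvalue3} for $(n-1,1)^T$, you use orthogonality $\sum_{\sigma}\chi_\rho(\sigma)=0$ and the trivial bound $|\chi_\rho|\le\dim[\rho]$ to obtain $|\lambda_\rho^{(k)}|\le|S_n\setminus FPF_k|=\sum_{j=0}^{n-k}\binom{n}{j}d_j\le 2n^{n-k}$, and then the Stirling comparison $n^{n/\ln n+7}=e^n n^7\ll n!/n^2$ finishes. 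This is cleaner and works uniformly for all three partitions in $\Phi_4$ (indeed for every nontrivial $\rho$), whereas the paper treats $(n-1,1)^T$ separately because the direct dimension bound from Lemma~\ref{bound_eigenvalue} is too weak there.
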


\begin{proof}
Consider the eigenvalues corresponding to irreducible representations in $\Phi_1\sqcup\Phi_2$. For each $\rho\in\Phi_1$, assume that the length of the first row or column of $\rho$ is $n-t$. When $3\leq t\leq \frac{n}{3}$, since ${n\choose t}e^{-t}$ is increasing in the range $3\leq t\leq \frac{n-e}{e+1}$ and is decreasing in the range $\frac{n-e}{e+1}< t\leq \frac{n}{3}$, thus, we have ${n\choose t}e^{-t}\geq \frac{n^3}{7e^3}$. By Lemma \ref{bound_dim1}, $\dim[\rho]\geq {n\choose t}e^{-t}\geq \frac{n^3}{7e^3}$. When $t\geq \frac{n}{3}$, $\rho$ has all rows and columns of length at most $\frac{2n}{3}$. Since $n$ is sufficiently large, by Theorem \ref{bound_dim2}, $\dim[\rho]\geq (\frac{3}{2}-\epsilon)^{n}\geq \frac{n^3}{7e^3}$. Therefore, for all $\rho\in\Phi_1$, we have $\dim[\rho]\geq \frac{n^3}{7e^3}$. Note that $|FPF_{k}|< n!$. By Lemma \ref{bound_eigenvalue}, we have
\begin{equation*}
|\lambda_{\rho}^{(k)}|\leq \frac{7e^3 n!}{n^3}
\end{equation*}
for all $\rho\in\Phi_1$ and $1\leq k\leq n$. According to Theorem \ref{eivenvalue4}, $\lambda_{(n)^{T}}^{(k)}=(-1)^{n-k-1}(n-k-1){n\choose k}$. Thus, we also have $|\lambda_{(n)^{T}}^{(k)}|\leq \frac{7e^3 n!}{n^3}$.


Consider the eigenvalues corresponding to irreducible representations in $\Phi_3$. Based on structures of Young diagrams of $(n-2,1,1)$ and $(n-2,2)$, one can easily get their hook lengths. Thus, by Theorem \ref{dim_lambda}, $\dim[(n-2,1,1)]=\frac{(n-1)(n-2)}{2}$ and $\dim[(n-2,2)]=\frac{(n-1)(n-3)}{2}$. Take $t=2$ in Theorem \ref{eigenvalue5}, for $1\leq k\leq n-5$, we have
\begin{align*}
\frac{(n-1)(n-2)}{2}\lambda_{(n-2,1,1)}^{(k)}&={n\choose k}\frac{k^2-3k+1}{2}d_{n-k}+O_{2}(n^{k+3});\\
\frac{(n-1)(n-3)}{2}\lambda_{(n-2,2)}^{(k)}&={n\choose k}\frac{k^2-3k+1}{2}d_{n-k}+O_{2}(n^{k+3}).
\end{align*}
For $k=1,2$ and $n$ sufficiently large, this leads to $\lambda_{(n-2,1,1)}^{(1)}=\lambda_{(n-2,2)}^{(1)}=-\left(\frac{1}{e}+o(1)\right)\cdot\frac{n!}{n^2}$ and $\lambda_{(n-2,1,1)}^{(2)}=\lambda_{(n-2,2)}^{(2)}=-\left(\frac{1}{2e}+o(1)\right)\cdot\frac{n!}{n^2}$. For $3\leq k\leq n-\frac{n}{\ln{n}}-7$, we have ${n\choose k}\frac{k^2-3k+1}{2}d_{n-k}>0$ and $n^{k+3}<\frac{n!}{n^3}$. This indicates that
\begin{align*}
\lambda_{(n-2,1,1)}^{(k)}\geq -c_1\frac{n!}{n^3} \text{~and~} \lambda_{(n-2,2)}^{(k)}\geq -c_2\frac{n!}{n^3}
\end{align*}
for all $3\leq k\leq n-\frac{n}{\ln{n}}-7$, where $c_1,c_2\geq 0$ are absolute constants. For $k> n-\frac{n}{\ln{n}}-7$, since we already have $\dim[(n-2,1,1)]=\frac{(n-1)(n-2)}{2}$ and $\dim[(n-2,2)]=\frac{(n-1)(n-3)}{2}$, by Lemma \ref{bound_eigenvalue} and $|FPF_{k}|< n!$, we have
\begin{align*}
|\lambda_{(n-2,1,1)}^{(k)}|,~|\lambda_{(n-2,2)}^{(k)}|\leq \frac{3n!}{n^2}.
\end{align*}

Consider the eigenvalues corresponding to irreducible representations in $\Phi_4$. For $1\leq k\leq n-\frac{n}{\ln{n}}-7$, by Lemma \ref{eigenvalue6}, we have $\lambda_{(n-1,1)^{T}}^{(k)}=O_{1}(n^{k+1})$ and $\lambda_{(n-2,1,1)^{T}}^{(k)}=\lambda_{(n-2,2)^{T}}^{(k)}=O_2(n^{k+1})$. Therefore, we have
\begin{align*}
|\lambda_{(n-1,1)^{T}}^{(k)}|,~|\lambda_{(n-2,1,1)^{T}}^{(k)}|,~|\lambda_{(n-2,2)^{T}}^{(k)}|\leq \frac{n!}{n^3},
\end{align*}
for all $1\leq k\leq n-\frac{n}{\ln{n}}-7$. For $k> n-\frac{n}{\ln{n}}-7$, based on the structure of $(n-1,1)^{T}$, we have
\begin{equation*}
\lambda_{(n-1,1)^{T}}^{(k)}=\frac{n}{k\dim[\rho]}\cdot(\dim[(n-1)^{T}]\cdot\lambda_{(n-1)^{T}}^{(k-1)}+\dim[(n-2,1)^{T}]\cdot\lambda_{(n-2,1)^{T}}^{(k-1)})
\end{equation*}
by Theorem \ref{eigenvalue3}. Since $\dim[(n-1)^{T}]=\dim[(n-1)]=1$ and $\dim[\rho]=\dim[(n-1,1)]=n-1$, we further have
\begin{equation*}
|\lambda_{(n-1,1)^{T}}^{(k)}|\leq\frac{n}{k(n-1)}\cdot|\lambda_{(n-1)^{T}}^{(k-1)}|+\frac{n}{k}\cdot|\lambda_{(n-2,1)^{T}}^{(k-1)}|.
\end{equation*}
From the first part of this proof, $|\lambda_{(n-1)^{T}}^{(k-1)}|\leq\frac{7e^3 (n-1)!}{(n-1)^3}<\frac{n!}{2n^2}$. Meanwhile, since $\dim[(n-2,1)^{T}]=n-2$, by Lemma \ref{bound_eigenvalue}, we have $|\lambda_{(n-2,1)^{T}}^{(k-1)}|\leq \frac{(n-1)!}{n-2}<\frac{3n!}{2n^2}$. Therefore, by the choice of $k$, we have
\begin{equation*}
|\lambda_{(n-1,1)^{T}}^{(k)}|\leq \frac{3n!}{n^2}.
\end{equation*}
Similarly, note that $\dim[(n-2,1,1)^{T}]=\dim[(n-2,1,1)]$ and $\dim[(n-2,2)^{T}]=\dim[(n-2,2)]$, by Lemma \ref{bound_eigenvalue}, we also have
\begin{align*}
|\lambda_{(n-2,1,1)^{T}}^{(k)}|,~|\lambda_{(n-2,2)^{T}}^{(k)}|\leq \frac{3n!}{n^2}.
\end{align*}
This completes the proof.
\end{proof}

\section{Proof of Theorem \ref{stabilityforsub}}

Let $n$ be a positive integer and $V$ be an $n$-dimensional vector space over $\mathbb{F}_q$. In the following, if there is no confusion, we shall omit the field size $q$ in the \emph{Gaussian binomial coefficient} and use ``dim'' in short for ``dimensional''.
\begin{lemma}\label{lem_sub1}\cite{GC2016}
Let $\mathbf{\alpha}$ be a $k$-dim subspace of $V$. Then, for integers $j,l$ satisfying $0\leq j\leq l$, the number of $l$-dim subspaces of $V$ whose intersection with $\mathbf{\alpha}$ has dimension $j$ is
\begin{align*}
q^{(k-j)(l-j)}\Gaussbinom{n-k}{l-j}\Gaussbinom{k}{j}.
\end{align*}
\end{lemma}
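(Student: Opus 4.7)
The plan is to prove Lemma \ref{lem_sub1} by a direct double-counting argument, splitting an $l$-dimensional subspace $W$ with $\dim(W \cap \alpha) = j$ into its intersection with $\alpha$ and a ``complementary'' part. First I would choose the intersection $\beta := W \cap \alpha$, which is a $j$-dim subspace of $\alpha$; there are $\Gaussbinom{k}{j}$ such choices. So the question reduces to: given $\beta \subseteq \alpha$ of dimension $j$, how many $l$-dim subspaces $W \supseteq \beta$ satisfy $W \cap \alpha = \beta$?

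Next I would pass to the quotient $V/\beta$. Subspaces $W$ containing $\beta$ correspond bijectively to subspaces $W/\beta \subseteq V/\beta$, with $\dim(W/\beta) = l-j$, and the condition $W \cap \alpha = \beta$ translates into $(W/\beta) \cap (\alpha/\beta) = \{\mathbf{0}\}$ inside the $(n-j)$-dim space $V/\beta$, where $\alpha/\beta$ has dimension $k-j$. Thus the count I need is the number of $m$-dim subspaces of an $N$-dim space over $\mathbb{F}_q$ that intersect a fixed $K$-dim subspace trivially, with $(N,K,m) = (n-j, k-j, l-j)$.

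For this I would count ordered bases. To choose an ordered basis $(v_1, \dots, v_m)$ of a subspace $U$ with $U \cap (\text{fixed } K\text{-dim subspace}) = \{\mathbf{0}\}$, at step $i$ the vector $v_i$ must lie outside the $(K+i-1)$-dim space spanned by the fixed subspace together with $v_1, \dots, v_{i-1}$, giving $q^N - q^{K+i-1}$ choices. Each $m$-dim $U$ admits $\prod_{i=0}^{m-1}(q^m - q^i)$ ordered bases. Dividing and simplifying $q$-powers yields
\[
\frac{\prod_{i=0}^{m-1}(q^N - q^{K+i})}{\prod_{i=0}^{m-1}(q^m - q^i)} = q^{Km}\Gaussbinom{N-K}{m}.
\]
Plugging in $(N,K,m) = (n-j, k-j, l-j)$ gives $q^{(k-j)(l-j)} \Gaussbinom{n-k}{l-j}$, and multiplying by the $\Gaussbinom{k}{j}$ choices of $\beta$ produces the stated formula.

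There is no real obstacle here; the identity is classical and the argument is purely enumerative. The only mild care needed is the $q$-power bookkeeping in the final step, where the exponents $K + i$ in the numerator and $i$ in the denominator must combine to exactly $Km$, and the remaining factors $(q^{N-K-i}-1)/(q^{m-i}-1)$ must be recognized as the Gaussian binomial $\Gaussbinom{N-K}{m}$.
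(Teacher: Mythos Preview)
Your argument is correct and is the standard proof of this classical $q$-enumeration identity: the quotient by $\beta$ cleanly reduces to counting complements, and your ordered-basis count with the bookkeeping $\sum_{i=0}^{m-1}(K+i)-\sum_{i=0}^{m-1}i=Km$ and $\prod_{i=0}^{m-1}\frac{q^{N-K-i}-1}{q^{m-i}-1}=\Gaussbinom{N-K}{m}$ is exactly right.

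There is nothing to compare against, however: the paper does not prove Lemma~\ref{lem_sub1} at all. It is simply quoted from \cite{GC2016} (Godsil's notes on association schemes) as a known fact and used as a black box in Proposition~\ref{counting intersection subspace}. So your write-up supplies a proof where the paper gives none; it is perfectly adequate for that purpose, and the level of detail is appropriate.
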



\begin{proposition}\label{counting intersection subspace}
For integer $1\leq t\leq n$, denote $U_0$ as a $t$-dim subspace of $V$. Let $\mathcal{F}$ be the family of all $k$-dim subspaces of $V$ containing $U_0$. Then, we have $|\mathcal{F}|=\Gaussbinom{n-t}{k-t}$ and
\begin{align}\label{sub_ineq01}
\mathcal{I}(\mathcal{F})=\left(\sum_{j=0}^{k-t}(j+t)q^{(k-t-j)^2}\Gaussbinom{n-k}{k-t-j}\Gaussbinom{k-t}{j}\right)\Gaussbinom{n-t}{k-t}.
\end{align}
\end{proposition}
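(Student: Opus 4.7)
The plan is to reduce to the quotient space $V/U_0$, which has dimension $n-t$. There is a standard bijection between the $k$-dim subspaces of $V$ containing $U_0$ and the $(k-t)$-dim subspaces of $V/U_0$, given by $F \mapsto F/U_0$. This immediately yields $|\mathcal{F}| = \Gaussbinom{n-t}{k-t}_q$, settling the first assertion. Moreover, if $F_1, F_2 \in \mathcal{F}$, then $U_0 \subseteq F_1 \cap F_2$, and the lattice isomorphism theorem gives $(F_1 \cap F_2)/U_0 = (F_1/U_0) \cap (F_2/U_0)$, so
\begin{equation*}
\dim(F_1 \cap F_2) = t + \dim(\bar{F}_1 \cap \bar{F}_2),
\end{equation*}
where $\bar{F}_i := F_i/U_0$.

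Using this identity, I would rewrite the total intersection number as a sum over pairs of $(k-t)$-dim subspaces of $V/U_0$:
\begin{equation*}
\mathcal{I}(\mathcal{F}) = \sum_{\bar{F}_1} \sum_{\bar{F}_2} \bigl(t + \dim(\bar{F}_1 \cap \bar{F}_2)\bigr).
\end{equation*}
Next, I would fix any $(k-t)$-dim subspace $\bar{F}_1 \subseteq V/U_0$ and evaluate the inner sum. By Lemma \ref{lem_sub1}, applied inside the $(n-t)$-dim ambient space $V/U_0$ with the fixed subspace $\bar{F}_1$ of dimension $k-t$, the number of $(k-t)$-dim subspaces $\bar{F}_2$ of $V/U_0$ whose intersection with $\bar{F}_1$ has dimension exactly $j$ equals
\begin{equation*}
q^{(k-t-j)(k-t-j)} \Gaussbinom{n-k}{k-t-j}_q \Gaussbinom{k-t}{j}_q = q^{(k-t-j)^2} \Gaussbinom{n-k}{k-t-j}_q \Gaussbinom{k-t}{j}_q.
\end{equation*}

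Summing over $j = 0, 1, \ldots, k-t$ with weights $t+j$ (absorbing the constant $t$ into the coefficient, since the total count of subspaces $\bar{F}_2$ over all $j$ is $\Gaussbinom{n-t}{k-t}_q$), the inner sum becomes
\begin{equation*}
\sum_{j=0}^{k-t}(j+t)\, q^{(k-t-j)^2} \Gaussbinom{n-k}{k-t-j}_q \Gaussbinom{k-t}{j}_q,
\end{equation*}
which is independent of the choice of $\bar{F}_1$. Multiplying by the outer count $|\mathcal{F}| = \Gaussbinom{n-t}{k-t}_q$ then yields the stated formula. There is essentially no obstacle here: the only thing to verify carefully is that the application of Lemma \ref{lem_sub1} is in the correct ambient space (namely $V/U_0$ rather than $V$), with the roles of $n, k, l, j$ specialised to $n-t, k-t, k-t, j$; the rest is bookkeeping.
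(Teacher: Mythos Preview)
Your proposal is correct and follows essentially the same approach as the paper: reduce to an $(n-t)$-dimensional space and apply Lemma~\ref{lem_sub1} there with parameters $(n-t,k-t,k-t,j)$. The only cosmetic difference is that the paper fixes a complement $V_1$ with $V=U_0\oplus V_1$ and uses the bijection $F\mapsto F\cap V_1$, whereas you use the quotient $V/U_0$; these are equivalent via the natural isomorphism $V_1\cong V/U_0$.
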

\begin{proof}
The first statement is an immediate consequence of Lemma \ref{lem_sub1}.

Denote $V=U_0\oplus V_1$ and take $\mathcal{G}_0$ as the family of all $(k-t)$-dim subspaces of $V_1$. Therefore, $\mathcal{F}=U_0\oplus \mathcal{G}_0=\{U_0\oplus G:G\in \mathcal{G}_0\}$. For $F_1\in\mathcal{F}$, let $F_1=U_0\oplus G_1$. Then, we have $\mathcal{I}(F_1,\mathcal{F})=\sum_{F\in\mathcal{F}}|\dim(F_1\cap F)|=\sum_{G\in \mathcal{G}_0}(|\dim(G_1\cap G)|+t)$. Combined with Lemma \ref{lem_sub1}, this leads to
\begin{align*}
\mathcal{I}(F_1,\mathcal{F})=\sum_{j=0}^{k-t}(j+t)q^{(k-t-j)^2}\Gaussbinom{(n-t)-(k-t)}{k-t-j}\Gaussbinom{k-t}{j}.
\end{align*}
Therefore, (\ref{sub_ineq01}) follows from $\mathcal{I}(\mathcal{F})=\sum_{F\in\mathcal{F}}\mathcal{I}(F,\mathcal{F})$.
\end{proof}

Now, we present the proof of Theorem \ref{stabilityforsub}.

\begin{proof}[Proof of Theorem \ref{stabilityforsub}]
First, we shall show that the number of popular $t$-dim subspaces is not large.
\begin{claim}\label{claim1}
Let $\mathcal{X}=\{U\in \Gaussbinom{V}{t} : |\mathcal{F}(U)|\ge \frac{|\mathcal{F}|}{(2k+2)\Gaussbinom{k}{t}}\}$, then $|\mathcal{X}|<(4k+4)\Gaussbinom{k}{t}$.
\end{claim}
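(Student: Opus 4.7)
The plan is to prove the claim via a double-counting argument on incidence pairs between $\mathcal{X}$ and $\mathcal{F}$, which I would then sharpen using a Bonferroni/inclusion-exclusion estimate that exploits the small co-degrees of distinct $t$-subspaces.

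First, I would count pairs $(U,F)$ with $U\in\mathcal{X}$, $F\in\mathcal{F}$ and $U\subseteq F$ in two ways. Since each $F\in\mathcal{F}$ contains exactly $\Gaussbinom{k}{t}_q$ subspaces of dimension $t$, one has
\[
\sum_{U\in\mathcal{X}}|\mathcal{F}(U)|=\sum_{F\in\mathcal{F}}|\{U\in\mathcal{X}:U\subseteq F\}|\le |\mathcal{F}|\Gaussbinom{k}{t}_q.
\]
On the other hand, by the defining property of $\mathcal{X}$, the left-hand side is at least $|\mathcal{X}|\cdot\frac{|\mathcal{F}|}{(2k+2)\Gaussbinom{k}{t}_q}$. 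This already gives a crude bound $|\mathcal{X}|\le(2k+2)\Gaussbinom{k}{t}_q^{2}$, which is weaker than the claimed $(4k+4)\Gaussbinom{k}{t}_q$ by a factor of $\Gaussbinom{k}{t}_q/2$.

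To close this gap, I would argue by contradiction: assume $|\mathcal{X}|\ge(4k+4)\Gaussbinom{k}{t}_q$, so $\sum_{U\in\mathcal{X}}|\mathcal{F}(U)|\ge 2|\mathcal{F}|$. Applying the Bonferroni inequality to the union $\bigcup_{U\in\mathcal{X}}\mathcal{F}(U)\subseteq\mathcal{F}$,
\[
|\mathcal{F}|\ge \sum_{U\in\mathcal{X}}|\mathcal{F}(U)|-\sum_{\substack{U_{1},U_{2}\in\mathcal{X}\\U_{1}\neq U_{2}}}|\mathcal{F}(U_{1})\cap\mathcal{F}(U_{2})|.
\]
For distinct $U_{1},U_{2}\in\Gaussbinom{V}{t}$, $\dim(U_{1}+U_{2})\ge t+1$, so $|\mathcal{F}(U_{1})\cap\mathcal{F}(U_{2})|=|\mathcal{F}(U_{1}+U_{2})|\le\Gaussbinom{n-t-1}{k-t-1}_q$. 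Combining with the lower bound $|\mathcal{F}|\ge\frac{(4k+4)^{2}n}{q^{n-k}}\Gaussbinom{n-t}{k-t}_q$ (from $\delta\ge(4k+4)^{2}n/q^{n-k}$), the ratio estimate $\Gaussbinom{n-t-1}{k-t-1}_q/\Gaussbinom{n-t}{k-t}_q=(q^{k-t}-1)/(q^{n-t}-1)\le 2/q^{n-k}$, and the hypothesis $n\ge(4k+4)^{2}\Gaussbinom{k}{t}_q^{2}$, one would obtain a contradiction by showing $|\mathcal{X}|^{2}\Gaussbinom{n-t-1}{k-t-1}_q<|\mathcal{F}|$, which forces $|\mathcal{F}|>|\mathcal{F}|$ in the displayed Bonferroni inequality.

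The main obstacle is the quantitative bookkeeping: naively plugging the crude bound $|\mathcal{X}|\le(2k+2)\Gaussbinom{k}{t}_q^{2}$ into the pairwise-overlap term contributes $O(\Gaussbinom{k}{t}_q^{4}\Gaussbinom{n-t-1}{k-t-1}_q)$, which is not automatically dominated by the $\frac{(4k+4)^{2}n}{q^{n-k}}\Gaussbinom{n-t}{k-t}_q$ lower bound on $|\mathcal{F}|$ unless $n$ is large relative to $\Gaussbinom{k}{t}_q^{4}$. I would therefore expect to bootstrap: use the Bonferroni estimate to derive a first improved upper bound on $|\mathcal{X}|$, then re-substitute to sharpen further, iterating until the desired bound $|\mathcal{X}|<(4k+4)\Gaussbinom{k}{t}_q$ is reached. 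Alternatively, a Cauchy--Schwarz second-moment argument counting $F$'s weighted by the number of popular $t$-subspaces they contain could replace the Bonferroni step and absorb the $\Gaussbinom{k}{t}_q$-factor loss more efficiently.
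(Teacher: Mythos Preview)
Your Bonferroni/co-degree approach is exactly the paper's, but you are missing one simple device that makes the bookkeeping immediate and eliminates any need for iteration. Rather than applying Bonferroni to all of $\mathcal{X}$ (whose size you only control a priori by the crude $(2k+2)\Gaussbinom{k}{t}_q^{2}$), the paper fixes a subset $\mathcal{X}_0\subseteq\mathcal{X}$ of size \emph{exactly} $(4k+4)\Gaussbinom{k}{t}_q$ and runs Bonferroni on $\mathcal{X}_0$. The single-sum term still gives
\[
\sum_{U\in\mathcal{X}_0}|\mathcal{F}(U)|\;\ge\;|\mathcal{X}_0|\cdot\frac{|\mathcal{F}|}{(2k+2)\Gaussbinom{k}{t}_q}\;=\;2|\mathcal{F}|,
\]
while the pairwise-overlap term is now $\binom{|\mathcal{X}_0|}{2}\Gaussbinom{n-t-1}{k-t-1}_q<8(k+1)^2\Gaussbinom{k}{t}_q^{2}\Gaussbinom{n-t-1}{k-t-1}_q$, with the exponent on $\Gaussbinom{k}{t}_q$ being $2$ rather than $4$. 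This is dominated by $|\mathcal{F}|$ in one line from $\delta\ge(4k+4)^2 n/q^{n-k}$ together with $n\ge(4k+4)^2\Gaussbinom{k}{t}_q^{2}$, yielding the contradiction directly.

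Your proposed bootstrap does not work as written: under the contradiction hypothesis, the Bonferroni inequality rearranges to $\binom{|\mathcal{X}|}{2}\Gaussbinom{n-t-1}{k-t-1}_q\ge|\mathcal{F}|$, which is a \emph{lower} bound on $|\mathcal{X}|$ and cannot be fed back to sharpen your upper bound. Even treating it as a quadratic in $|\mathcal{X}|$ and trying to trap $|\mathcal{X}|$ below the smaller root requires ruling out the larger root via the crude bound, and that in turn forces $n\gtrsim\Gaussbinom{k}{t}_q^{3}$, which the hypotheses do not give. The fixed-size-subset trick sidesteps all of this.
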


\begin{proof}
Otherwise, assume that there is an $\mathcal{X}_0\subseteq \mathcal{X}$ such that $|\mathcal{X}_0|=(4k+4)\Gaussbinom{k}{t}$. We have
\begin{align*}
|\mathcal{F}|\geq |\bigcup\limits_{U\in \mathcal{X}}\mathcal{F}(U)|&\geq \sum_{U\in \mathcal{X}_0}|\mathcal{F}(U)|-\sum_{U_1\neq U_2\in \mathcal{X}_0}|\mathcal{F}(U_1+ U_2)|\\
&\geq 2|\mathcal{F}|-{|\mathcal{X}_0|\choose 2}\Gaussbinom{n-t-1}{k-t-1}.
\end{align*}
Since $|\mathcal{F}|=\delta\Gaussbinom{n-t}{k-t}$ and $\delta\geq \frac{(4k+4)^2n}{q^{n-k}}$, based on the choice of $n$ and $\delta$, we have
\begin{align}\label{sub_ineq01.5}
|\mathcal{F}|&\geq \frac{(4k+4)^2n}{q^{n-k}}\cdot\Gaussbinom{n-t}{k-t}=\frac{(4k+4)^2n}{q^{n-k}}\cdot \frac{q^{n-t}-1}{q^{k-t}-1}\cdot\Gaussbinom{n-t-1}{k-t-1}\nonumber\\
&= (4k+4)^2n\cdot \frac{q^{n-t}-1}{q^{n-t}-q^{n-k}}\cdot\Gaussbinom{n-t-1}{k-t-1}\nonumber\\
&> 8(k+1)^2\cdot{\Gaussbinom{k}{t}}^2\cdot\Gaussbinom{n-t-1}{k-t-1}\geq {|\mathcal{X}_0|\choose 2}\Gaussbinom{n-t-1}{k-t-1}.
\end{align}
This leads to $2|\mathcal{F}|-{|\mathcal{X}_0|\choose 2}\Gaussbinom{n-t-1}{k-t-1}>|\mathcal{F}|$, a contradiction.
\end{proof}

Claim \ref{claim1} enables us to proceed further estimation on $\mathcal{I}(\mathcal{F})$. Next, we shall prove that the most popular $t$-dim subspace is contained in the majority members of $\mathcal{F}$.

\begin{claim}\label{claim2}
There exists a $t$-dim subspace $U_0\in \mathcal{X}$ such that $|\mathcal{F}(U_0)|\ge (1-\frac{2}{3k+3})|\mathcal{F}|$.
\end{claim}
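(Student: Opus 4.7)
My plan is to argue by contradiction. Suppose $|\mathcal{F}(U)|<(1-\frac{2}{3k+3})|\mathcal{F}|$ for every $U\in\mathcal{X}$, and exhibit a competing family $\mathcal{F}^{\ast}$ of the same size as $\mathcal{F}$ for which $\mathcal{I}(\mathcal{F}^{\ast})$ strictly exceeds a matching upper bound on $\mathcal{I}(\mathcal{F})$, contradicting $\mathcal{I}(\mathcal{F})=\mathcal{MI}(\mathcal{F})$.

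For the competitor, I fix any $t$-dim subspace $U_0$ (say the most popular member of $\mathcal{X}$). If $\delta\leq 1$, take $\mathcal{F}^{\ast}$ to be any $|\mathcal{F}|$-element subset of the full $t$-star $\mathcal{S}_{U_0}=\{W\in\Gaussbinom{V}{k}:U_0\subseteq W\}$; if $\delta>1$, let $\mathcal{F}^{\ast}=\mathcal{S}_{U_0}\cup\mathcal{G}$ where $\mathcal{G}$ is any collection of $|\mathcal{F}|-\Gaussbinom{n-t}{k-t}_q$ extra $k$-dim subspaces. Proposition \ref{counting intersection subspace} yields a clean lower bound: every ordered pair inside $\mathcal{S}_{U_0}\cap\mathcal{F}^{\ast}$ contributes at least $t$ to $\mathcal{I}(\mathcal{F}^{\ast})$, and the closed-form formula supplies a quantitative positive correction from pairs whose intersection strictly contains $U_0$.

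For the upper bound on $\mathcal{I}(\mathcal{F})$ I split according to whether a pair's intersection reaches dimension $t$. Writing $N_t=|\{(F_1,F_2)\in\mathcal{F}^2:\dim(F_1\cap F_2)\geq t\}|$, I obtain
\[\mathcal{I}(\mathcal{F})\leq (t-1)|\mathcal{F}|^2+(k-t+1)\,N_t.\]
Since $\dim(F_1\cap F_2)\geq t$ forces $F_1\cap F_2$ to contain some $t$-dim subspace, double-counting gives
\[N_t\leq\sum_{U\in\Gaussbinom{V}{t}}|\mathcal{F}(U)|^2=\sum_{U\in\mathcal{X}}|\mathcal{F}(U)|^2+\sum_{U\notin\mathcal{X}}|\mathcal{F}(U)|^2.\]
The unpopular contribution is easy: the defining bound $|\mathcal{F}(U)|<|\mathcal{F}|/((2k+2)\Gaussbinom{k}{t}_q)$ together with $\sum_{U}|\mathcal{F}(U)|=|\mathcal{F}|\Gaussbinom{k}{t}_q$ yields $\sum_{U\notin\mathcal{X}}|\mathcal{F}(U)|^2\leq |\mathcal{F}|^2/(2k+2)$. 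For the popular contribution I will exploit the hypothesis $|\mathcal{F}(U)|<(1-\frac{2}{3k+3})|\mathcal{F}|$ together with Claim \ref{claim1}'s bound $|\mathcal{X}|<(4k+4)\Gaussbinom{k}{t}_q$.

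The main obstacle will be obtaining a sharp enough estimate for $\sum_{U\in\mathcal{X}}|\mathcal{F}(U)|^2$, since the naive combination $\sum_{U\in\mathcal{X}}|\mathcal{F}(U)|\leq |\mathcal{F}|\Gaussbinom{k}{t}_q$ with the pointwise bound is weakened by a factor of $\Gaussbinom{k}{t}_q$ that is far too large. To avoid this loss, I expect to need a swap/exchange argument in place of (or in addition to) the direct counting: replace some $F_0\in\mathcal{F}\setminus\mathcal{F}(U_0)$ by some $F_0'\in\mathcal{S}_{U_0}\setminus\mathcal{F}$ and show that, under the assumed failure of the claim, the average of $\mathcal{I}(F_0',\mathcal{F})-\mathcal{I}(F_0,\mathcal{F})$ over such pairs is strictly positive. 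The delicate calibration between the constant $\frac{2}{3k+3}$ and the tight upper range $\delta\leq 1+\frac{1}{96t\ln q(k+1)}$ strongly suggests that these two estimates need to be carried out in lock-step so that their lower-order terms in $q$ cancel cleanly, which is the technical crux of the argument.
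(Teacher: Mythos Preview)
Your proposal has a genuine gap: the upper bound $\mathcal{I}(\mathcal{F})\leq (t-1)|\mathcal{F}|^2+(k-t+1)N_t$ is far too lossy to close. Even if you succeeded in proving $N_t\leq (1-c)|\mathcal{F}|^2$ for the target constant $c$, the factor $(k-t+1)$ out front gives $\mathcal{I}(\mathcal{F})\leq \bigl(t-1+(k-t+1)(1-c)\bigr)|\mathcal{F}|^2$, which exceeds $t|\mathcal{F}|^2$ as soon as $k-t+1\geq 2$. So no contradiction with $\mathcal{I}(\mathcal{F}^\ast)\geq t|\mathcal{F}|^2$ is obtainable along this route, and the swap argument you suggest at the end is really the tool for the \emph{final} step of Theorem~\ref{stabilityforsub} (after Claim~\ref{claim2} is already in hand), not a substitute for it.

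The paper avoids this loss by comparing upper and lower bounds for $\sum_{U}|\mathcal{F}(U)|^2$ directly, rather than for $\mathcal{I}(\mathcal{F})$. For the lower bound, it uses the identity $\sum_{U\in\Gaussbinom{V}{t}}|\mathcal{F}(U)|^2=\sum_{A,B\in\mathcal{F}}\Gaussbinom{\dim(A\cap B)}{t}$ together with Jensen's inequality for the convex function $x\mapsto\Gaussbinom{x}{t}$: since optimality gives $\mathcal{I}(\mathcal{F})\geq t|\mathcal{F}|^2$ (or $\geq (t-\frac{2(\delta-1)}{\delta^2})|\mathcal{F}|^2$ when $\delta>1$), Jensen yields $\sum_U|\mathcal{F}(U)|^2\geq |\mathcal{F}|^2$ (respectively $\geq (1-\frac{1}{24(k+1)})|\mathcal{F}|^2$). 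For the upper bound, the missing trick on the popular part is to factor out the maximum and then control the \emph{sum}: $\sum_{U\in\mathcal{X}}|\mathcal{F}(U)|^2\leq |\mathcal{F}(U_0)|\sum_{U\in\mathcal{X}}|\mathcal{F}(U)|$, and by Bonferroni $\sum_{U\in\mathcal{X}}|\mathcal{F}(U)|\leq |\mathcal{F}|+\binom{|\mathcal{X}|}{2}\Gaussbinom{n-t-1}{k-t-1}$, since distinct $U_1,U_2\in\mathcal{X}$ have $\dim(U_1+U_2)\geq t+1$. The second term is negligible by Claim~\ref{claim1} and the hypothesis on $n$, which eliminates precisely the extraneous $\Gaussbinom{k}{t}_q$ factor you flagged. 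Matching the two bounds on $\sum_U|\mathcal{F}(U)|^2$ then gives $|\mathcal{F}(U_0)|\geq (1-\frac{2}{3k+3})|\mathcal{F}|$ directly.
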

\begin{proof}
Denote $U_0$ as the most popular $t$-dim subspace appearing in the members of $\mathcal{F}$.
\begin{itemize}
  \item When $\delta\leq 1$.
\end{itemize}

Consider the new family $\mathcal{F}_0\subseteq \Gaussbinom{V}{k}$ of size $|\mathcal{F}|$ and $U_0\subseteq F$ for all $F\in \mathcal{F}_0$. According to (\ref{basic_id}), we have $\mathcal{I}(\mathcal{F}_0)\geq t|\mathcal{F}|^2$. Therefore, by the optimality of $\mathcal{F}$, $\mathcal{I}(\mathcal{F})\geq t|\mathcal{F}|^2$.

Given a positive integer $q$, for variable $x\in \mathbb{R}^{+}$, define the function $\Gaussbinom{x}{k}=\prod\limits_{i=0}^{k-1}\frac{q^{x-i}-1}{q^{k-i}-1}$. One can easily verify that $\Gaussbinom{x}{k}$ is a convex increasing function when $x\geq k-1$. Thus by Jensen Inequality, we have
\begin{align}\label{sub_ineq02}
\sum_{U\in\Gaussbinom{V}{t}}|\mathcal{F}(U)|^2=\sum_{A,B\in\mathcal{F}}\Gaussbinom{\dim(A\cap B)}{t}&\ge \Gaussbinom{\frac{\sum_{A,B\in\mathcal{F}}\dim(A\cap B)}{|\mathcal{F}|^2}}{t}\cdot|\mathcal{F}|^2.
\end{align}
Note that $\mathcal{I}(\mathcal{F})=\sum_{A,B\in\mathcal{F}}\dim(A\cap B)$, (\ref{sub_ineq02}) leads to
$\sum_{U\in\Gaussbinom{V}{t}}|\mathcal{F}(U)|^2\geq \Gaussbinom{\frac{\mathcal{I}(\mathcal{F})}{|\mathcal{F}|^2}}{t}\cdot|\mathcal{F}|^2\geq |\mathcal{F}|^2$. Moreover, we also have
\begin{align*}
\sum_{U\in\Gaussbinom{V}{t}}|\mathcal{F}(U)|^2&=\sum_{U\in \mathcal{X}}|\mathcal{F}(U)|^2+\sum_{U\notin \mathcal{X}}|\mathcal{F}(U)|^2 \le|\mathcal{F}(U_0)|\cdot\sum_{U\in \mathcal{X}}|\mathcal{F}(U)|+\frac{|\mathcal{F}|}{(2k+2)\Gaussbinom{k}{t}}\cdot\sum_{U\notin \mathcal{X}}|\mathcal{F}(U)|\nonumber\\
&\leq |\mathcal{F}(U_0)|\cdot\left(|\mathcal{F}|+\sum_{U_1\neq U_2\in \mathcal{X}}|\mathcal{F}(U_1+ U_2)|\right)+\frac{|\mathcal{F}|}{(2k+2)\Gaussbinom{k}{t}}\cdot\sum_{U\in \Gaussbinom{V}{t}}|\mathcal{F}(U)|.
\end{align*}
Note that $\dim(U_1+ U_2)\geq t+1$ for $U_1\neq U_2\in \mathcal{X}$ and $\sum_{U\in \Gaussbinom{V}{t}}|\mathcal{F}(U)|=\sum_{F\in \mathcal{F}}|\{U\subseteq F: U\in \Gaussbinom{V}{t}\}|= |\mathcal{F}|\Gaussbinom{k}{t}$, we can further obtain
\begin{align}\label{sub_ineq02.5}
\sum_{U\in\Gaussbinom{V}{t}}|\mathcal{F}(U)|^2&\leq |\mathcal{F}(U_0)|\cdot\left(|\mathcal{F}|+{|\mathcal{X}|\choose 2}\Gaussbinom{n-t-1}{k-t-1}\right)+\frac{|\mathcal{F}|}{(2k+2)\Gaussbinom{k}{t}}\cdot\sum_{F\in\mathcal{F}}|\{U\subseteq F: U\in \Gaussbinom{V}{t}\}|\nonumber\\
&\leq |\mathcal{F}(U_0)|\cdot\left(|\mathcal{F}|+{|\mathcal{X}|\choose 2}\Gaussbinom{n-t-1}{k-t-1}\right)+\frac{|\mathcal{F}|^2}{2k+2}.
\end{align}
According to the calculation of (\ref{sub_ineq01.5}), the choice of $\delta$ leads to ${|\mathcal{X}|\choose 2}\Gaussbinom{n-t-1}{k-t-1}\leq \frac{{\Gaussbinom{k}{t}}^{2}|\mathcal{F}|}{n}$. Note that $n\geq (4k+4)^2\Gaussbinom{k}{t}^2$, this indicates that ${|\mathcal{X}|\choose 2}\Gaussbinom{n-t-1}{k-t-1}\leq\frac{|\mathcal{F}|}{(4k+4)^2}$. Therefore, by (\ref{sub_ineq02.5}), we have $|\mathcal{F}(U_0)|\geq (1-\frac{2}{3k+3})|\mathcal{F}|$.

\begin{itemize}
  \item When $\delta> 1$.
\end{itemize}

Write $U_0=U_1\oplus \langle u_0\rangle$, where $U_1$ is a $(t-1)$-dim subspace of $U_0$ and $\langle u_0\rangle$ is the $1$-dim subspace spanned by some $u_0\in U_0$. Let $U'=U_1\oplus \langle u_1\rangle$ be another $t$-dim subspace of $V$, where $u_1\in V\setminus U_0$. Consider the new family $\mathcal{G}_0=\mathcal{G}_1\sqcup \mathcal{G}_2$ with size $|\mathcal{F}|$, where $\mathcal{G}_1$ consists of all $k$-dim subspaces containing $U_0$ and $\mathcal{G}_2$ consists of $(\delta-1)\Gaussbinom{n-t}{k-t}$ $k$-dim subspaces containing $U'$. Based on the structure of $\mathcal{G}_0$, according to (\ref{basic_id}), we have
\begin{align*}
\mathcal{I}(\mathcal{G}_0)&\geq (t-1)|\mathcal{F}|^2+|\mathcal{G}_1|^2+|\mathcal{G}_2|^2\\
&= t|\mathcal{F}|^2-2|\mathcal{G}_1||\mathcal{G}_2|=(t-\frac{2(\delta-1)}{\delta^2})|\mathcal{F}|^2.
\end{align*}

Again, by the optimality of $\mathcal{F}$, we have $\mathcal{I}(\mathcal{F})\geq \mathcal{I}(\mathcal{G}_0)\geq (t-\frac{2(\delta-1)}{\delta^2})|\mathcal{F}|^2$. Therefore, (\ref{sub_ineq02}) leads to
$\sum_{U\in\Gaussbinom{V}{t}}|\mathcal{F}(U)|^2\geq \Gaussbinom{t-\frac{2(\delta-1)}{\delta^2}}{t}\cdot|\mathcal{F}|^2$. Now, consider the function $\Gaussbinom{t-x}{t}=\prod_{i=0}^{t-1}\frac{q^{t-x-i}-1}{q^{t-i}-1}$ for $x\in \mathbb{R}$ satisfying $0<x<1$. Clearly, $\Gaussbinom{t-x}{t}$ is a decreasing function and when $x$ is fixed, the term $\frac{q^{t-x-i}-1}{q^{t-i}-1}$ is decreasing as $i$ increases. Therefore, we have
\begin{align*}
\Gaussbinom{t-x}{t}&=\prod_{i=0}^{t-1}\frac{q^{t-x-i}-1}{q^{t-i}-1}\geq (\frac{q^{1-x}-1}{q-1})^{t}.
\end{align*}
Since $\delta\leq 1+\frac{1}{96 t(k+1)\ln{q}}$, we have $\frac{2(\delta-1)}{\delta^2}\leq\frac{1}{48 t(k+1)\ln{q}}$. Denote $\varepsilon=\frac{1}{48 t(k+1)\ln{q}}$. Then, we have $\Gaussbinom{t-\frac{2(\delta-1)}{\delta^2}}{t}\geq (\frac{q^{1-\varepsilon}-1}{q-1})^{t}=(1-\frac{1-q^{-\varepsilon}}{1-q^{-1}})^{t}$. Note that for $q\geq 2$, $1-q^{-\varepsilon}\leq \varepsilon\ln{q}$ and $1-\frac{1}{q}\geq \frac{1}{2}$. Thus, we have
$$\Gaussbinom{t-\frac{2(\delta-1)}{\delta^2}}{t}\geq (1-2\varepsilon\ln{q})^{t}\geq 1-2t\varepsilon\ln{q}= 1-\frac{1}{24(k+1)}.$$
This leads to
$$\sum_{U\in\Gaussbinom{V}{t}}|\mathcal{F}(U)|^2\geq \Gaussbinom{t-\frac{2(\delta-1)}{\delta^2}}{t}\cdot|\mathcal{F}|^2\geq (1-\frac{1}{24(k+1)})|\mathcal{F}|^{2}.$$
Combined with the upper bound given by (\ref{sub_ineq02.5}), by the choice of $n$ and $\delta$, we also have $|\mathcal{F}(U_0)|\geq (1-\frac{2}{3k+3})|\mathcal{F}|$.
\end{proof}

Finally, we show that when $\delta\leq 1$, $U_0$ is contained in all members of $\mathcal{F}$; when $\delta> 1$, all $k$-dim subspaces of $V$ that contains $U_0$ are in $\mathcal{F}$.

\begin{itemize}
  \item When $\delta\leq 1$.
\end{itemize}

Assume that there exists an $F_0\in\mathcal{F}$ such that $U_0\nsubseteq F_0$. Since for each $F\in \mathcal{F}$,
\begin{align}\label{sub_ineq03}
I(F,\mathcal{F})= \sum_{A\in \mathcal{F}}\dim(F\cap A)= \sum_{U_0\subseteq A, A\in \mathcal{F}}\dim(F\cap A)+\sum_{U_0\nsubseteq A, A\in\mathcal{F}}\dim(F\cap A).
\end{align}
Take $F=F_0$ in the above equality and consider the first term $\sum_{U_0\subseteq A, A\in \mathcal{F}}\dim(F_0\cap A)$ in the RHS. Assume that $A=A_0\oplus U_0$ and $F_0=F_1\oplus (U_0\cap F_0)$. When $\dim(F_0\cap A)\ge t$, knowing that $U_0\nsubseteq F_0$, we have $|\dim(A_0\cap F_1)|\geq 1$. Therefore, we can write $A_0=A_1\oplus U_1$ for some $1$-dim subspace in $F_1$. Note that there are at most $\Gaussbinom{k}{1}$ different choices of such $U_1\subseteq F_1$. And for each fixed $U_1$, there are at most $\Gaussbinom{n-(t+1)}{k-(t+1)}$ different choices of $A$ satisfying $U_0\oplus U_1\subseteq A$. Therefore, the number of such $A$s is at most $\Gaussbinom{k}{1}\Gaussbinom{n-t-1}{k-t-1}$. When $\dim(F_0\cap A)\le t-1$, since $A\in \mathcal{F}$, the number of such $A$s is upper bounded by $|\mathcal{F}(U_0)|$. Therefore, we have
\begin{equation*}
\sum_{U_0\subseteq A, A\in \mathcal{F}}\dim(F_0\cap A)\leq(k-1)\Gaussbinom{k}{1}\Gaussbinom{n-t-1}{k-t-1}+(t-1)|\mathcal{F}(U_0)|.
\end{equation*}
As for the second term, we have that $\sum_{U_0\nsubseteq A, A\in \mathcal{F}}\dim(F_0\cap A)\leq k(|\mathcal{F}|-|\mathcal{F}(U_0)|)$. Therefore, combined with Claim \ref{claim2}, this leads to
\begin{align}\label{sub_ineq04}
I(F_0,\mathcal{F})&\leq k\left(|\mathcal{F}|+\Gaussbinom{k}{1}\Gaussbinom{n-t-1}{k-t-1}\right)-(k-t+1)|\mathcal{F}(U_0)|\nonumber\\
&\leq (t-\frac{2t+k+1}{3k+3})|\mathcal{F}|+k\Gaussbinom{k}{1}\Gaussbinom{n-t-1}{k-t-1}.
\end{align}

From the assumption, we know that $\mathcal{F}$ is not contained in any full $t$-star. Therefore, we can replace $F_0$ with some $F'\notin \mathcal{F}$ containing $U_0$. Denote the resulting new family as $\mathcal{F}'$, we have
\begin{align*}
\mathcal{I}(\mathcal{F}')-\mathcal{I}(\mathcal{F})&=\sum_{F\in\mathcal{F}'}\mathcal{I}(F,\mathcal{F}')-\sum_{F\in \mathcal{F}}\mathcal{I}(F,\mathcal{F})\\
&=2(\mathcal{I}(F',\mathcal{F}\setminus\{F_0\})-\mathcal{I}(F_0,\mathcal{F}\setminus\{F_0\})),
\end{align*}
where the second equality follows from $\mathcal{F}'\setminus\{F'\}=\mathcal{F}\setminus\{F_0\}$. By (\ref{sub_ineq03}) and Claim \ref{claim2}, we have $I(F',\mathcal{F}\setminus\{F_0\})\geq t|\mathcal{F}(U_0)|\geq (t-\frac{2t}{3k+3})|\mathcal{F}|$. Therefore, based on (\ref{sub_ineq04}) and the calculations in (\ref{sub_ineq01.5}), we have
\begin{align*}
\mathcal{I}(F',\mathcal{F}\setminus\{F_0\})-\mathcal{I}(F_0,\mathcal{F}\setminus\{F_0\})&\geq \mathcal{I}(F',\mathcal{F}\setminus\{F_0\})-\mathcal{I}(F_0,\mathcal{F})\\
&\geq (t-\frac{2t}{3k+3})|\mathcal{F}|-(t-\frac{2t+k+1}{3k+3})|\mathcal{F}|-k\Gaussbinom{k}{1}\Gaussbinom{n-t-1}{k-t-1}\\
&\geq \frac{|\mathcal{F}|}{3}-\frac{k\Gaussbinom{k}{1}}{8(k+1)^2\Gaussbinom{k}{t}^{2}}|\mathcal{F}|\\
&\geq (\frac{1}{3}-\frac{1}{8(k+1)\Gaussbinom{k}{t}})|\mathcal{F}|>0.
\end{align*}
This contradicts the fact that $\mathcal{I}(\mathcal{F})=\mathcal{MI}(\mathcal{F})$. Thus, all $F\in \mathcal{F}$ must contain $U_0$.

\begin{itemize}
  \item When $\delta> 1$.
\end{itemize}

Assume that there exists a $G'\in \Gaussbinom{V}{k}\setminus\mathcal{F}$ with $U_0\subseteq G'$. Since $|\mathcal{F}|=\delta\Gaussbinom{n-t}{k-t}$ and $\delta>1$, clearly, there exists some $G_0\in\mathcal{F}$ such that $U_0\nsubseteq G_0$. Take $F=G_0$ in (\ref{sub_ineq03}), since the estimation in (\ref{sub_ineq04}) is irrelevant to the choice of $\delta$. Thus, with similar procedures, we can also obtain $\mathcal{I}(G_0,\mathcal{F})\leq (t-\frac{2t+k+1}{3k+3})|\mathcal{F}|+k\Gaussbinom{k}{1}\Gaussbinom{n-t-1}{k-t-1}$. On the other hand, by (\ref{sub_ineq03}), we also have $\mathcal{I}(G',\mathcal{F}\setminus\{G_0\})\geq t|\mathcal{F}(U_0)|\geq(t-\frac{2t}{3k+3})|\mathcal{F}|$. Again, we can replace $G_0$ with $G'$ and denote the resulting new family as $\mathcal{F}'$. With similar arguments as those for the case $\delta\leq 1$, this procedure increases the value of $\mathcal{I}(\mathcal{F})$ strictly, a contradiction. Therefore, all $k$-dim subspaces of $V$ containing $U_0$ are in $\mathcal{F}$.

This completes the proof of Theorem \ref{stabilityforsub}.
\end{proof}

%

\section{Proof of Theorem \ref{theremovallemma}}

For any integer $s\geq\frac{1}{2}(n-1)!$, there exist unique $k\in\mathbb{N}$ and $\varepsilon\in(-\frac{1}{2},\frac{1}{2}]$ such that $s=(k+\varepsilon)(n-1)!$. Denote $\mathcal{T}_{0}(n,s)$ as the subfamily of $\mathcal{T}(n,s)$ consisting of $\lfloor k+\varepsilon\rfloor=a_0$ pairwise disjoint $1$-cosets and $\lfloor(k+\varepsilon-a_0)(n-1)\rfloor=a_1$ pairwise disjoint $2$-cosets from another $1$-coset disjoint from the former $a_0$ $1$-cosets.


Assume that
\begin{align}\label{ineq01}
\mathcal{T}_0(n,s)=(\bigsqcup_{i=2}^{a_1+1}\mathcal{C}_{{1\rightarrow 1},{2\rightarrow i}})\sqcup(\bigsqcup_{j=2}^{a_0+1} \mathcal{C}_{{1\rightarrow j}}),
\end{align}
where $\mathcal{C}_{{1\rightarrow 1},{2\rightarrow i}}=\{\sigma\in S_n: \sigma(1)=1 \text{~and~} \sigma(2)=i\}$ and $\mathcal{C}_{{1\rightarrow j}}=\{\sigma\in S_n: \sigma(1)=j\}$. 
Note that for every $\mathcal{T}\subseteq S_n$,
\begin{align*}
\mathcal{I}(\mathcal{T})&=\sum_{i,j\in [n]}|\mathcal{T}_{{i\rightarrow j}}|^2,
\end{align*}
where $\mathcal{T}_{{i\rightarrow j}}=\{\sigma\in \mathcal{T}: \sigma(i)=j\}$. Hence, when $0\leq a_0\leq a_1\leq n-1$, we have
\begin{align}\label{ineq02}
\mathcal{I}(\mathcal{T}_{0}(n,s))&=\sum_{i,j\in [n]}|\mathcal{T}_{0}(n,s)_{i\rightarrow j}|^{2}=\sum_{j\in [n]}|\mathcal{T}_{0}(n,s)_{1\rightarrow j}|^{2}+\sum_{j\in [n]}|\mathcal{T}_{0}(n,s)_{2\rightarrow j}|^{2}+\sum_{i\in [3,n]}\sum_{j\in [n]}|\mathcal{T}_{0}(n,s)_{i\rightarrow j}|^{2}\nonumber\\
&=\big[(a_1(n-2)!)^2+a_0((n-1)!)^2\big]+((n-2)!)^2(a_0^2 n+2a_0a_1-2a_0^2+a_1-a_0)+\nonumber\\
&~~~~(n-2)\big[(a_0(n-2)!)^2+a_0((a_0-1)(n-2)!+(a_1-1)(n-3)!)^2+\nonumber\\
&~~~~~~~~~~~~~~~(a_1-a_0)(a_0(n-2)!+(a_1-1)(n-3)!)^2+\nonumber\\
&~~~~~~~~~~~~~~~(n-a_1-1)(a_0(n-2)!+a_1(n-3)!)^2\big].
\end{align}
When $0\leq a_1\leq a_0\leq n-1$, similarly, we have
\begin{align}\label{ineq02.5}
\mathcal{I}(\mathcal{T}_{0}(n,s))&=\big[(a_1(n-2)!)^2+a_0((n-1)!)^2\big]+((n-2)!)^2(a_0^2 n+2a_0a_1-2a_0^2+a_0-a_1)+\nonumber\\
&~~~~(n-2)\big[(a_0(n-2)!)^2+a_1((a_0-1)(n-2)!+(a_1-1)(n-3)!)^2+\nonumber\\
&~~~~~~~~~~~~~~~(a_0-a_1)((a_0-1)(n-2)!+a_1(n-3)!)^2+\nonumber\\
&~~~~~~~~~~~~~~~(n-a_0-1)(a_0(n-2)!+a_1(n-3)!)^2\big].
\end{align}
For both cases, if we denote $\eta_1=\frac{a_1}{n-1}$, then we have
\begin{align}\label{ineq03}
\mathcal{I}(\mathcal{T}_{0}(n,s))&\geq((n-1)!)^2\big\{(a_0+\eta_1^2)+\frac{(a_0^2 n+2a_0a_1-2a_0^2+a_1-a_0)}{(n-1)^2}+\nonumber\\
&~~~~~~~~~~~~~~~~~~~\frac{n-2-o(1)}{(n-1)^2}\big[a_0^2+a_0(a_0-1+\eta_1)^2+(n-a_0-1)(a_0+\eta_1)^2\big]\big\}.
\end{align}

To proceed the proof of Theorem \ref{theremovallemma}, we need some additional notations and a stability result by Ellis, Filmus and Friedgut \cite{EFF2015} (see Theorem 1 in \cite{EFF2015}). Assume each permutation in $S_n$ is distributed uniformly. Then, for a function $f:S_n\rightarrow\mathbb{R}$, the expected value of $f$ is defined by $\mathbb{E}[f]=\frac{1}{n!}\sum_{\sigma\in S_n}f(\sigma)$. The inner product of two functions $f,g:S_n\rightarrow\mathbb{R}$ is defined as $\langle f,g\rangle:=\mathbb{E}[f\cdot g]$, this induces the norm $\|f\|:=\sqrt{\langle f,f\rangle}$. Given $c>0$, denote $round(c)$ as the nearest integer to $c$.

\begin{theorem}\cite{EFF2015}\label{stabilityforper}
There exist positive constants $C_0$ and $\varepsilon_0$ such that the following holds. Let $\mathcal{F}$ be a subfamily of $S_n$ with $|\mathcal{F}|=\alpha(n-1)!$ for some $\alpha\leq\frac{n}{2}$. Let $f=\mathbbm{1}_{\mathcal{F}}$ be the characteristic function of $\mathcal{F}$ and let $f_{U_1}$ be the orthogonal projection of $f$ onto $U_1$. If $\mathbb{E}[(f-f_{U_1})^2]=\varepsilon\mathbb{E}[f]$ for some $\varepsilon\leq\varepsilon_0$, then
\begin{equation*}
\mathbb{E}[(f-g)^2]\leq C_0\alpha^2\left(\frac{1}{n^2}+\frac{\varepsilon^{\frac{1}{2}}}{n}\right),
\end{equation*}
where $g$ is the characteristic function of a union of $round(\alpha)$ cosets of $S_n$.
\end{theorem}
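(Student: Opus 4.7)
My plan is to analyze $\mathcal{I}(\mathcal{F})$ via the spectral decomposition of an appropriately weighted Cayley-type matrix on $S_n$, and show that $\mathcal{I}(\mathcal{F})$ is essentially determined by the mass of $f = \mathbbm{1}_\mathcal{F}$ that lies in the low-frequency subspace $U_1$. Near-maximality of $\mathcal{I}(\mathcal{F})$ will then force $\|f - f_{U_1}\|^2$ to be small, at which point Theorem \ref{stabilityforper} converts this $L^2$-closeness into proximity to a union of $1$-cosets.

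\textbf{Spectral identity for $\mathcal{I}(\mathcal{F})$.} First, I would introduce the symmetric matrix $M$ on $S_n$ defined by $M(\sigma,\tau) = |\{i\in[n]: \sigma(i)=\tau(i)\}|$, so that $\mathcal{I}(\mathcal{F}) = f^{T} M f$. The pointwise identity $M(\sigma,\tau) = n - \sum_{k=1}^n \mathbbm{1}_{FPF_k}(\sigma\tau^{-1})$ yields the matrix decomposition $M = nJ - \sum_{k=1}^n A_k$, where $A_k$ is the adjacency matrix of $\Gamma_k$. Since all $A_k$ diagonalize on the common isotypic decomposition $\bigoplus_{\rho\vdash n} U_\rho$ with respective eigenvalues $\lambda_\rho^{(k)}$, we obtain
\[
\mathcal{I}(\mathcal{F}) = n|\mathcal{F}|^2 - \sum_{\rho\vdash n} \Lambda_\rho \|f_{U_\rho}\|^2, \qquad \Lambda_\rho := \sum_{k=1}^n \lambda_\rho^{(k)},
\]
where $\|\cdot\|$ denotes the unnormalized $\ell^2$ norm. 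Using Lemma \ref{cor1} to solve the resulting linear system gives $\Lambda_{(n)} = n!(n-1)$ and $\Lambda_{(n-1,1)} = -n!/(n-1)$. Since $\|f_{U_{(n)}}\|^2 = |\mathcal{F}|^2/n!$ and $\|f_{U_1}\|^2 = \|f_{U_{(n)}}\|^2 + \|f_{U_{(n-1,1)}}\|^2$, rearranging yields the key identity
\[
\mathcal{I}(\mathcal{F}) = \frac{(n-2)|\mathcal{F}|^2}{n-1} + \frac{n!}{n-1}\|f_{U_1}\|^2 - E(\mathcal{F}),\qquad E(\mathcal{F}) := \sum_{\rho\in\Phi} \Lambda_\rho \|f_{U_\rho}\|^2.
\]

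\textbf{Bounding the error, applying stability, and the moreover case.} The next step is to show $|\Lambda_\rho| = O(n!/n^2)$ for each $\rho \in \Phi$, so that $|E(\mathcal{F})| \le C(n!/n^2)\|f - f_{U_1}\|^2$. For $\Phi_1\sqcup\Phi_2$ this is immediate from the uniform bound in Lemma \ref{lem1}; for $\Phi_4$, summing the two-regime bound over $1\le k\le n$ gives the same order; for $\Phi_3$, the one-sided lower bound on the bulk range combines with the identity $\sum_{k=0}^n (k^2-3k+1)\binom{n}{k}d_{n-k} = 0$ (a consequence of $\mathbb{E}[\#\mathrm{fix}]=1$, $\mathbb{E}[(\#\mathrm{fix})^2]=2$ for uniform $\sigma\in S_n$) to cancel the leading term from Theorem \ref{eigenvalue5}, leaving a residual of the same order. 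Applying the key identity to $g = \mathbbm{1}_{\mathcal{T}(n,|\mathcal{F}|)}$, and observing that $g\in U_2$ so only $\Phi_3$ contributes to $E(\mathcal{T})$, an explicit projection computation for the partial $1$-coset piece of $g$ gives $\|g - g_{U_1}\|^2 = O(|\varepsilon|(n-1)!)$ and $|E(\mathcal{T})| = O((n!/n^2)|\varepsilon|(n-1)!)$. Subtracting the identities for $f$ and $g$ and using $\mathcal{I}(\mathcal{F}) \ge \mathcal{I}(\mathcal{T}(n,|\mathcal{F}|)) - \delta((n-1)!)^2$, the dominant coefficient $n!/(n-1)$ of $\|f_{U_1}\|^2$ absorbs the $O(n!/n^2)$ error terms and produces
\[
\|f - f_{U_1}\|^2 \le C(|\varepsilon|+\delta)(n-1)!.
\]
Translating to normalized norms, $\mathbb{E}[(f - f_{U_1})^2] \le C(|\varepsilon|+\delta)/n$ and $\mathbb{E}[f] = (k+\varepsilon)/n$, so the ratio $\varepsilon_{\mathrm{stab}} \le C(|\varepsilon|+\delta)/(k+\varepsilon)$ meets the hypothesis of Theorem \ref{stabilityforper} once $c$ is chosen small. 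Because $|\varepsilon|\le 1/2$, $\mathrm{round}(k+\varepsilon) = k$, so Theorem \ref{stabilityforper} yields the indicator $\mathbbm{1}_\mathcal{G}$ of a union $\mathcal{G}$ of $k$ cosets with $\mathbb{E}[(f - \mathbbm{1}_\mathcal{G})^2] \le C_0(k+\varepsilon)^2(1/n^2 + \sqrt{\varepsilon_{\mathrm{stab}}}/n)$; since both functions are Boolean, this equals $|\mathcal{F}\Delta\mathcal{G}|/n!$, and substituting the bound on $\varepsilon_{\mathrm{stab}}$ and dividing through yields the stated estimate $|\mathcal{F}\Delta\mathcal{G}| \le C_0(\sqrt{2k(|\varepsilon|+\delta)} + k/n)|\mathcal{F}|$. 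For the moreover case $\varepsilon = \delta = 0$, the displayed bound forces $\|f - f_{U_1}\|^2 = 0$, so $f\in U_1$; combined with $f$ being Boolean of squared norm $k(n-1)!$ and the fact that $U_1$ is spanned by $1$-coset indicators, the classical characterization of Boolean functions in $U_1$ forces $\mathcal{F}$ to be a union of $k$ pairwise disjoint $1$-cosets.

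\textbf{Main obstacle.} The central technical point is obtaining $|\Lambda_\rho| = O(n!/n^2)$ for $\rho \in \Phi_3$, because the bounds of Lemma \ref{lem1} for $\Phi_3$ are only one-sided on the bulk range $3\le k\le n - n/\ln n - 7$. The cancellation in the identity $\sum_{k=0}^n (k^2-3k+1)\binom{n}{k}d_{n-k} = 0$ is precisely what keeps the signed sum $\Lambda_\rho$ small, and verifying this together with a matching control of the residuals from Theorem \ref{eigenvalue5} (and the extremal-$k$ contribution from Lemma \ref{bound_eigenvalue}) is the most delicate ingredient of the proof. A secondary subtlety is the explicit evaluation of $\|g - g_{U_1}\|^2$ for the extremal family $\mathcal{T}(n,s)$, which requires computing the $U_1$-projection of the partial $1$-coset carefully in order to land at the correct power of $n$ so that the hypothesis budget $\delta((n-1)!)^2$ suffices.
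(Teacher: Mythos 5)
Your proposal does not address the statement you were asked to prove. The statement is Theorem~\ref{stabilityforper}, the stability theorem of Ellis, Filmus and Friedgut from~\cite{EFF2015}: it asserts that if a Boolean $f=\mathbbm{1}_{\mathcal{F}}$ has small projection onto $U_1^\perp$ (measured relative to $\mathbb{E}[f]$), then $\mathcal{F}$ is close in symmetric difference to a union of $\mathrm{round}(\alpha)$ cosets. The paper itself gives no proof of this theorem---it is simply cited and used as a black box in the proof of Theorem~\ref{theremovallemma}. What you have sketched is a proof of Theorem~\ref{theremovallemma} (the total-intersection stability result for permutation families), and your argument explicitly \emph{invokes} Theorem~\ref{stabilityforper} as a key step (``Theorem~\ref{stabilityforper} converts this $L^2$-closeness into proximity to a union of $1$-cosets''; ``meets the hypothesis of Theorem~\ref{stabilityforper}, \ldots so Theorem~\ref{stabilityforper} yields~$\mathcal{G}$''). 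A proof that uses the target statement as an ingredient is circular and cannot be accepted as a proof of that statement.

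To actually prove Theorem~\ref{stabilityforper} one must work entirely at the level of the geometry of Boolean functions whose Fourier/representation-theoretic weight is nearly concentrated on the permutation representation $U_1$: there is no matrix $M=nJ-\sum_k A_k$, no quantity $\mathcal{I}(\mathcal{F})$, and no extremal family $\mathcal{T}(n,s)$ to compare against, since none of these appear in the statement. The genuine argument (as in~\cite{EFF2015}) proceeds by passing from a function close to $U_1$ to a genuine projection $f_{U_1}$, showing that a Boolean function near a low-degree function must itself be close to a Boolean low-degree function, and then exploiting the known structure of Boolean functions in $U_1$ (they are disjoint unions of $1$-cosets) together with a rounding argument to produce the approximating union of $\mathrm{round}(\alpha)$ cosets and the quantitative error $C_0\alpha^2\bigl(1/n^2+\varepsilon^{1/2}/n\bigr)$. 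None of this appears in your proposal; what you wrote is an outline (broadly compatible with the paper's own argument) for the separate Theorem~\ref{theremovallemma}.
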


\begin{proof}[Proof of Theorem \ref{theremovallemma}]
For the convenience of our proof, for $\sigma,\pi\in S_n$, we denote $\sigma\cap \pi=\{i\in [n]:\sigma(i)=\pi(i)\}$. Set $c=\min\{\frac{\varepsilon_0}{12},\frac{1}{2}\}$ and $C=3C_0$, where $\varepsilon_0$ and $C_0$ are the positive constants from Theorem \ref{stabilityforper}. Let $f$ be the characteristic vector of $\mathcal{F}$. Write $f=f_0+f_1+f_2$, where $f_0$ is the projection of $f$ onto the eigenspace $U_{(n)}$ and $f_1$ is the projection of $f$ onto the eigenspace $U_{(n-1,1)}$. By the orthogonality of the eigenspaces, we have
\begin{equation}\label{ineq04}
\|f\|^2=\|f_0\|^2+\|f_1\|^2+\|f_2\|^2.
\end{equation}
Moreover, since $f$ is Boolean and $U_{(n)}=span\{\vec{\mathbf{1}}\}$, we also have
\begin{equation}\label{ineq05}
\begin{cases}
\|f\|^2=\mathbb{E}[f^2]=\mathbb{E}[f]=\frac{|\mathcal{F}|}{n!}=\frac{k+\varepsilon}{n},\\
\|f_0\|^2=\langle f,\vec{\mathbf{1}}\rangle^{2}=\mathbb{E}[f]^2=\frac{(k+\varepsilon)^2}{n^2}.
\end{cases}
\end{equation}

By the definition of $\mathcal{I}(\mathcal{F})$, we have
\begin{align}\label{ineq06}
\mathcal{I}(\mathcal{F})&=\sum_{\sigma\in \mathcal{F}}\sum_{\pi\in \mathcal{F}}|\sigma\cap\pi|=f^tBf,
\end{align}
where $B=(b_{i,j})_{n!\times n!}$ is a matrix with entry $b_{i,j}=|\sigma_i\cap\sigma_j|$ under a certain ordering of all permutations in $S_n=\{\sigma_1,\ldots,\sigma_{n!}\}$. According to the definition of $B$, we can write $B=\sum_{s=1}^{n}B_s$, where $B_s=(b^{s}_{i,j})_{n!\times n!}$ is the matrix with entries
\begin{equation*}
b^{s}_{i,j}=
\begin{cases}
1,~\text{if}~|\sigma_i\cap\sigma_j|\geq s;\\
0,~\text{otherwise}.
\end{cases}
\end{equation*}
From a simple observation, we know that $B_s=J-A_s$, where $J$ is the $n!\times n!$ matrix with all entries $1$ and $A_s$ is the adjacency matrix of $\Gamma_s$, i.e., the adjacency matrix of the Cayley graph on $S_n$ with generating set $FPF_s$. Therefore, by (\ref{ineq06}), we have
\begin{align}\label{ineq07}
\mathcal{I}(\mathcal{F})&=f^t\sum_{s=1}^{n}(J-A_s)f=nf^tJf-\sum_{s=1}^{n}f^tA_sf \nonumber\\
&=n|\mathcal{F}|^2-\sum_{s=1}^{n}(f_0^tA_sf_0+f_1^tA_sf_1+f_2^tA_sf_2).
\end{align}
Since $U_{(n)}$ and $U_{(n-1,1)}$ are eigenspaces for all $A_s$, $1\leq s\leq n$, therefore,
\begin{align*}
\mathcal{I}(\mathcal{F})&=n|\mathcal{F}|^2-n!\sum_{s=1}^{n}(\lambda_{(n)}^{(s)} \|f_0\|^2+\lambda_{(n-1,1)}^{(s)}\|f_1\|^2)-\sum_{s=1}^{n}f_2^tA_sf_2 \nonumber\\
&=n|\mathcal{F}|^2-n!\sum_{s=1}^{n}\big[(\lambda_{(n)}^{(s)}-\lambda_{(n-1,1)}^{(s)})\|f_0\|^2+\lambda_{(n-1,1)}^{(s)}\|f\|^2-\lambda_{(n-1,1)}^{(s)}\|f_2\|^2\big]-\sum_{s=1}^{n}f_2^tA_sf_2.
\end{align*}
According to Lemma \ref{cor1}, $\sum_{s=1}^{n}(\lambda_{(n)}^{(s)}-\lambda_{(n-1,1)}^{(s)})=n!(n-\frac{n-2}{n-1})$ and $\sum_{s=1}^{n}\lambda_{(n-1,1)}^{(s)}=(n-1)!(\frac{n-2}{n-1}-2)$. Therefore, we have
\begin{align}\label{ineq08}
\mathcal{I}(\mathcal{F})&=n|\mathcal{F}|^2-((n-1)!)^2\big[(k+\varepsilon)^2(n-\frac{n-2}{n-1})+(k+\varepsilon)(\frac{n-2}{n-1}-2)\big]-\nonumber\\
&~~~~((n-1)!)^2(2n-\frac{n^2-2n}{n-1})\|f_2\|^2-\sum_{s=1}^{n}f_2^tA_sf_2\nonumber\\
&=((n-1)!)^2\big[(k+\varepsilon)^2\frac{n-2}{n-1}+(k+\varepsilon)\frac{n}{n-1}\big]-\frac{((n)!)^2}{n-1}\|f_2\|^2-\sum_{s=1}^{n}f_2^tA_sf_2.
\end{align}

On the other hand, write $k+\varepsilon=a_0+\eta_1+\frac{c}{n-1}$ for some $0\leq c\leq 1$. By (\ref{ineq03}) and (\ref{ineq08}), we have
\begin{align*}
\mathcal{I}(\mathcal{F})-\mathcal{I}(\mathcal{T}_{0}(n,s))&\leq(\eta_1-\eta_1^2+\frac{c'}{n-1})((n-1)!)^2-\frac{((n)!)^2}{n-1}\|f_2\|^2-\sum_{s=1}^{n}f_2^tA_sf_2,
\end{align*}
where $c'=(1+2c)(a_0+\eta_1+1)$. Note that $\mathcal{I}(\mathcal{F})\geq \mathcal{I}(\mathcal{T}_{0}(n,s))-\delta((n-1)!)^2$, which indicates that
\begin{align}\label{ineq09}
\frac{(n!)^2}{n-1}\|f_2\|^2+\sum_{s=1}^{n}f_2^tA_sf_2\leq(\eta_1-\eta_1^2+\delta+\frac{c'}{n-1})((n-1)!)^2.
\end{align}
To obtain an upper bound on $\|f_2\|^2$, we need the following claim.

\textbf{Claim 1.} $\sum_{s=1}^{n}f_2^tA_sf_2\geq-(c_3\frac{(n!)^{2}}{n^2}+\frac{6(n!)^{2}}{n\ln{n}})\|f_2\|^{2}$ for some absolute constant $c_3$.
\begin{proof}
Denote $\Phi=\{\rho\vdash n:\rho\neq (n),(n-1,1)\}$. First, note that $f_2$ lies in $U_1^{\bot}$ and the eigenvalues corresponding to $U_1^{\bot}$ are $\{\lambda_{\rho}^{(s)}: \rho\in\Phi, 1\leq s\leq n\}$. Thus, we have
\begin{equation}\label{ineq10}
f_2^tA_sf_2=n!\sum_{\rho\in \Phi}\lambda_{\rho}^{(s)}\|f_{\rho}\|^2,
\end{equation}
where $f_{\rho}$ is the orthogonal projection of $f_2$ (or $f$) onto $U_{\rho}$.

Based on estimations about $\lambda_{\rho}^{(s)}$s for $\rho\in\Phi$ from Lemma \ref{lem1}, we have
\begin{align}\label{ineq11}
\begin{cases}
f_2^tA_sf_2\geq -c_3\frac{(n!)^{2}}{n^3}\|f_2\|^{2}, ~\text{for }3\leq s\leq n-\frac{n}{\ln{n}}-7;\\
f_2^tA_sf_2\geq -\frac{3(n!)^{2}}{n^2}\|f_2\|^{2}, ~\text{for } s=1,2 ~\text{and } n-\frac{n}{\ln{n}}-7\leq s\leq n,
\end{cases}
\end{align}
where $c_3>0$ is an absolute constant. This leads to
\begin{equation*}
\sum_{s=1}^{n}f_2^tA_sf_2\geq -(c_3\frac{(n!)^{2}}{n^2}+\frac{6(n!)^{2}}{n\ln{n}})\|f_2\|^{2}.
\end{equation*}
\end{proof}

Now, with the help of Claim 1 and (\ref{ineq09}), we have
\begin{align*}
(\eta_1-\eta_1^2+\delta+\frac{c'}{n-1})((n-1)!)^2&\geq\frac{(n!)^2}{n-1}\|f_2\|^2+\sum_{s=1}^{n}f_2^tA_sf_2\\
&\geq (n!)^2(\frac{1}{n-1}-\frac{7}{n\ln{n}})\|f_2\|^2\\
&\geq \frac{n!(n-1)!}{1+o(1)}\|f_2\|^2.
\end{align*}
From the definition, $\min\{\eta_1,1-\eta_1\}\leq |\varepsilon|$ and $c'\leq 3(k+\varepsilon+1)$. Thus, we have
\begin{align*}
\|f_2\|^2\leq\frac{\eta_1-\eta_1^2+\delta+\frac{c'}{n-1}}{n}(1+o(1))\leq \frac{|\varepsilon|+\delta}{k+\varepsilon}(1+o(1))\|f\|^2.
\end{align*}
Since $\max\{|\varepsilon|,\delta\}\leq ck$, we have
\begin{equation*}
\mathbb{E}[(f-f_{U_1})^2]=\|f_2\|^2\leq \varepsilon_0\|f\|^2=\varepsilon_0\mathbb{E}[f].
\end{equation*}
By Theorem \ref{stabilityforper}, there exists $\mathcal{G}$, a union of $k$ $1$-cosets of $S_n$ such that
\begin{equation*}
\mathbb{E}[(f-\mathbbm{1}_{\mathcal{G}})^2]\leq C_0(k+\varepsilon)^2\left(\sqrt{\frac{|\varepsilon|+\delta}{(k+\varepsilon)n^2}}(1+o(1))+\frac{1}{n^2}\right).
\end{equation*}
This leads to $|\mathcal{F}\Delta\mathcal{G}|=\mathbb{E}[(f-\mathbbm{1}_{\mathcal{G}})^2]\cdot n!\leq C_0(\sqrt{2k(|\varepsilon|+\delta)}+\frac{k}{n})|\mathcal{F}|$.

When $\varepsilon=\delta=0$, we have $k+\varepsilon=k=a_0$ and $\eta_1=0$. Since $0=a_1<a_0$ for this case, we need another estimation of $\mathcal{I}(\mathcal{T}_{0}(n,a_0(n-1)!))$. Similar to (\ref{ineq02}), we have
\begin{align}\label{ineq11.5}
\mathcal{I}(\mathcal{T}_{0}(n,s))&=\sum_{i,j\in [n]}|\mathcal{T}_{0}(n,s)_{i\rightarrow j}|^{2}=\sum_{j\in [n]}|\mathcal{T}_{0}(n,s)_{1\rightarrow j}|^{2}+\sum_{i\in [2,n]}\sum_{j\in [n]}|\mathcal{T}_{0}(n,s)_{i\rightarrow j}|^{2}\nonumber\\
&=a_0((n-1)!)^2+(n-1)\big[a_0((a_0-1)(n-2)!)^2+(n-a_0)(a_0(n-2)!)^2\big].
\end{align}
Therefore, combined with (\ref{ineq08}), (\ref{ineq11.5}) leads to
\begin{align}\label{ineq12}
\frac{(n!)^2}{n-1}\|f_2\|^2+\sum_{s=1}^{n}f_2^tA_sf_2\leq0.
\end{align}
By Claim 1, we have $\|f_2\|^2\leq 0$. Thus, $f=\mathbbm{1}_{\mathcal{F}}=f_0+f_1\in U_1$. As shown by Ellis et. al \cite{EFP2011} (see Theorem 7 and Theorem 8 in \cite{EFP2011}), this indicates that $\mathcal{F}$ is the union of $k$ $1$-cosets of $S_n$. Since $|\mathcal{F}|=k(n-1)!$, these $k$ $1$-cosets must be pairwise disjoint.

This completes the proof.
\end{proof}

\begin{Remark}
As an immediate consequence of Theorem \ref{theremovallemma}, when $|\varepsilon|,~\delta=o(\frac{1}{n})$, the optimal family $\mathcal{F}$ with maximal total intersection number is ``almost'' the union of $k$ disjoint $1$-cosets. However, due to the restrictions of parameters in Theorem \ref{stabilityforper}, the structural characterization given by Theorem \ref{theremovallemma} becomes weaker as each value of $|\varepsilon|$ and $\delta$ grows.
\end{Remark}

\section{Upper bounds on maximal total intersection numbers for families from different schemes}
In this section, we will show several upper bounds on maximal total intersection numbers for families of vector spaces and permutations using linear programming method for corresponding association schemes. 

\subsection{Grassmann scheme}\label{vector_space}
In this subsection, we take $(X,\mathcal{R})$ as the Grassmann scheme, which can be regarded as a $q$-analogue of the Johnson scheme (for explicit definition of Johnson scheme, see \cite{GC2016}).

For $1\leq k\leq n$, denote $G_q(n,k)$ as the set of all subspaces in $\mathbb{F}_q^n$ with constant dimension $k$ and $\mathcal{R}=\{R_0,\ldots,R_k\}$ as the corresponding distance relation set, where $R_i=\{(U_1,U_2)\in G_q(n,k)\times G_q(n,k):\dim(U_1\cap U_2)=k-i\}$.
$(G_q(n,k),\mathcal{R})$ is called $the$ $Grassmann$ $scheme$.

\begin{theorem}\label{thm5.1}\cite{DP1973}
Given $0\leq i,j\leq k$, the eigenvalues and the dual eigenvalues of the Grassmann scheme $G_q(n,k)$ are given by
\begin{flalign}
P_{i}(j)=E^{(q)}_{i}(j);\label{p_ploy} \\
Q_{j}(i)=D^{(q)}_{j}(i),\label{q_g}
\end{flalign}
where the $generalized~Eberlein~polynomial$ $E^{(q)}_i(x)$ and the $dual$ $Hahn$ $polynomial$ $D^{(q)}_j(x)$ are defined as follows:
\begin{flalign}
E_i^{(q)}(x)&=\sum\limits_{r=0}^i(-1)^{i-r}q^{i-r\choose 2}\Gaussbinom{k-r}{k-i}\Gaussbinom{k-x}{r}\Gaussbinom{n+r-k-x}{r}q^{rx},\label{gen_p}\\
D_j^{(q)}(x)&=\left(\Gaussbinom{n}{j}-\Gaussbinom{n}{j-1}\right)\sum\limits_{r=0}^j\left\{(-1)^{r}q^{r\choose 2}\Gaussbinom{j}{r}\Gaussbinom{n+1-r}{r}{\Gaussbinom{k}{r}}^{-1}{\Gaussbinom{n-k}{r}}^{-1}\right\}\Gaussbinom{x}{r}q^{-rx}.
\end{flalign}
\end{theorem}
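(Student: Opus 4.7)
The plan is to establish these formulas by decomposing the permutation module $\mathbb{C}^{G_q(n,k)}$ under the natural action of $GL_n(\mathbb{F}_q)$ and exploiting the common eigenspaces of the adjacency matrices $A_0,\ldots,A_k$. I would first recall the multiplicity-free decomposition $\mathbb{C}^{G_q(n,k)}=\bigoplus_{j=0}^{k} V_j$, where $V_j$ is an irreducible $GL_n(\mathbb{F}_q)$-submodule of dimension $u_j=\Gaussbinom{n}{j}_q-\Gaussbinom{n}{j-1}_q$ (with the convention $\Gaussbinom{n}{-1}_q=0$). By Schur's lemma, each $A_i$ acts on $V_j$ as a scalar, which is exactly $P_i(j)$; similarly the idempotent $B_j$ picks out $V_j$.

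To obtain the closed form for $P_i(j)$, I would pick a fixed $j$-dimensional subspace $W\subseteq V$ and consider the indicator vector $\mathbf{1}_W$ of the $k$-spaces containing $W$; letting $W$ range over the $j$-flats and orthogonalizing gives a family of vectors compatible with the decomposition above. Computing $A_i\mathbf{1}_W$ reduces to counting, for each $k$-subspace $U'$, the number of $k$-spaces $U\supseteq W$ with $\dim(U\cap U')=k-i$, which by Lemma \ref{lem_sub1} (applied after projecting $U'$ modulo $U'\cap W$) is a sum of Gaussian binomials. Möbius inversion over the subspace lattice then produces the alternating sum
\[
P_i(j)=\sum_{r=0}^{i}(-1)^{i-r}q^{\binom{i-r}{2}}\Gaussbinom{k-r}{k-i}_q\Gaussbinom{k-j}{r}_q\Gaussbinom{n+r-k-j}{r}_q q^{rj}=E_i^{(q)}(j).
\]
As a verification, one can check the formula satisfies the P-polynomial three-term recurrence $xA_i=b_{i-1}A_{i-1}+a_i A_i + c_{i+1}A_{i+1}$ available in any distance-regular scheme, whose coefficients are readable from Lemma \ref{lem_sub1}, and proceed by induction on $i$ starting from $P_0(j)=1$ and $P_1(j)$ computed directly.

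For the dual eigenvalues $Q_j(i)$, I would invoke identity (\ref{relation_PQ}), namely $\overline{P_i(j)}/v_i=Q_j(i)/u_j$, with valency $v_i=q^{i^2}\Gaussbinom{k}{i}_q\Gaussbinom{n-k}{i}_q$ (read off from Lemma \ref{lem_sub1} with $l=k$) and multiplicity $u_j$ as above. Substituting the Eberlein expression for $P_i(j)$ and repeatedly applying the $q$-Vandermonde identity together with the reflection $\Gaussbinom{a}{b}_q=\Gaussbinom{a}{a-b}_q$ rewrites the summation into the dual-$q$-Hahn form displayed in the theorem. The main obstacle here is that the Eberlein sum is indexed up to $i$ whereas the dual-Hahn sum is indexed up to $j$, so the rewriting requires a genuine $q$-hypergeometric transformation of Pfaff--Saalschütz type. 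The cleanest route is to avoid the explicit reshuffle altogether: show that the right-hand side $D_j^{(q)}(i)$ satisfies the same three-term recurrence in $q^{-i}$ as $\frac{u_j}{v_i}P_i(j)$ and agrees at two initial values, which pins down $Q_j(i)$ uniquely and is how the result is traditionally organized within Delsarte's orthogonal-polynomial framework.
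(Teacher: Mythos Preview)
The paper does not prove Theorem~\ref{thm5.1} at all: it is quoted verbatim from Delsarte~\cite{DP1973} and used as a black box, so there is no ``paper's own proof'' to compare against. Your proposal therefore goes well beyond what the paper does.

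As a sketch of the standard argument your outline is broadly sound: the multiplicity-free decomposition of $\mathbb{C}^{G_q(n,k)}$ under $GL_n(\mathbb{F}_q)$ with $u_j=\Gaussbinom{n}{j}_q-\Gaussbinom{n}{j-1}_q$, Schur's lemma to identify $P_i(j)$ as the scalar by which $A_i$ acts on $V_j$, and the use of the incidence vectors $\mathbf{1}_W$ together with M\"obius inversion on the subspace lattice is exactly the route Delsarte takes. Likewise, deriving $Q_j(i)$ from $P_i(j)$ via the duality relation~(\ref{relation_PQ}) and the valency $v_i=q^{i^2}\Gaussbinom{k}{i}_q\Gaussbinom{n-k}{i}_q$ is the standard manoeuvre, and your remark that passing from the Eberlein form to the dual $q$-Hahn form requires a $q$-Pfaff--Saalsch\"utz--type identity (or, equivalently, matching three-term recurrences and initial data) is accurate and is precisely why most sources, including Delsarte, organize the computation through the orthogonal-polynomial recurrence rather than by direct algebraic reshuffling.

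One place to tighten: the vectors $\mathbf{1}_W$ for $j$-flats $W$ span $U_j=\bigoplus_{l\le j}V_l$, not $V_j$ itself, so the eigenvalue you extract from $A_i\mathbf{1}_W$ is not immediately $P_i(j)$; you need the additional step of orthogonalizing against $U_{j-1}$ (which you allude to) or, equivalently, of showing that the counting function you obtain is a polynomial in $q^{-j}$ of degree $i$ and then identifying it via the recurrence. This is routine but should be stated explicitly if you intend the argument to stand on its own.
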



Now, consider a family $\mathcal{F}\subseteq G_q(n,k)$ with size $M$.
According to the definition of $a_i$ in (\ref{inner_distribution}), we have
\begin{flalign*}
a_i=\frac{1}{M}|\{(U_1,U_2):~U_1,U_2\in \mathcal{F},~\dim(U_1\cap U_2)=k-i\}|.
\end{flalign*}
This leads to
\begin{flalign}\label{sum_of_Ai'_g}
a_{0}=1,~\sum\limits_{i=0}^k a_i=M.
\end{flalign}
Then, recall the definition of $\mathcal{I}(\mathcal{F})$ from (\ref{basic_id}), we have
\begin{flalign}
\mathcal{I}(\mathcal{F})&=M\sum\limits_{i=0}^k(k-i)a_{i}=kM\sum\limits_{i=0}^ka_i-M\sum\limits_{i=0}^kia_i\nonumber\\
&=kM^2-M\sum\limits_{i=0}^kia_i.\label{distance_and_A'_g}
\end{flalign}

%
Based on the relationship between inner distribution $a_i$s and dual distribution $b_i$s, we have the following theorem.
\begin{theorem}\label{d_B1}
Given a prime power $q$ and positive integers $n,k,M$ with $k\leq n$, $M\leq \Gaussbinom{n}{k}$. Let $\mathcal{F}\subseteq G_q(n,k)$ with $|\mathcal{F}|=M$ and $\{b_0,\ldots,b_k\}$ be the dual distribution of $\mathcal{F}$. Then, we have
\begin{flalign}
\mathcal{I}(\mathcal{F})&\le\left(b_1+1-\Gaussbinom{n}{1}\right)\frac{qM^2\Gaussbinom{k}{1}\Gaussbinom{n-k}{1}}{\Gaussbinom{n}{1}\left(\Gaussbinom{n}{1}-1\right)}+kM^2,\label{distance_and_B1'_grassmann}\\
\mathcal{I}(\mathcal{F})&\le\left(\frac{\Gaussbinom{n}{k}}{M}-\sum\limits_{r=2}^kb_r-\Gaussbinom{n}{1}\right)\frac{qM^2\Gaussbinom{k}{1}\Gaussbinom{n-k}{1}}{\Gaussbinom{n}{1}\left(\Gaussbinom{n}{1}-1\right)}+kM^2.\label{d_lp_bound_grassmann}
\end{flalign}
\end{theorem}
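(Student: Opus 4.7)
The plan is to run a Delsarte-style linear programming bound over the Grassmann scheme $(G_q(n,k),\mathcal{R})$. By (\ref{distance_and_A'_g}) we have $\mathcal{I}(\mathcal{F}) = kM^2 - M\sum_{i=0}^k i\,a_i$, so it suffices to produce a lower bound on $\sum_i i\, a_i$ that is linear in $b_1$. The definition of the dual distribution (in the derivation preceding Lemma \ref{B_represent_A}) gives $Mb_1 = \sum_i Q_1(i)\, a_i$. Hence, if I can find constants $\alpha,\beta$ with $i \geq \alpha + \beta Q_1(i)$ for all $i\in\{0,1,\ldots,k\}$, then multiplying by $a_i\geq 0$ and summing, together with $\sum_i a_i = M$, yields $\sum_i i\, a_i \geq \alpha M + \beta M b_1$.

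To choose $\alpha, \beta$, first compute $Q_1(i) = D_1^{(q)}(i)$ from Theorem \ref{thm5.1}; after simplification using $\Gaussbinom{i}{1}q^{-i} = (1-q^{-i})/(q-1)$, one obtains
\[
Q_1(i) = u_1 - \frac{u_1 \Gaussbinom{n}{1}}{\Gaussbinom{k}{1}\Gaussbinom{n-k}{1}}\cdot\frac{1-q^{-i}}{q-1},\qquad u_1 := \Gaussbinom{n}{1}-1,
\]
so $Q_1$ is a convex, decreasing function of $i$. This suggests pinning the line $\alpha + \beta Q_1(i)$ to agree with $i$ at the two extreme points $i=0,1$, which forces
\[
\alpha = \frac{q\Gaussbinom{k}{1}\Gaussbinom{n-k}{1}}{\Gaussbinom{n}{1}},\qquad \beta = -\frac{q\Gaussbinom{k}{1}\Gaussbinom{n-k}{1}}{u_1\Gaussbinom{n}{1}}.
\]
With these multipliers the required inequality $i \geq \alpha+\beta Q_1(i)$ collapses, after algebra, to $q(i-1) + q^{1-i} \geq i$. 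This is trivial at $i=0,1$; it is AM--GM at $i=2$ (namely $q+q^{-1}\geq 2$); and for $i\geq 3$ it follows from $q(i-1)\geq 2(i-1)\geq i$ combined with $q^{1-i}>0$.

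Assembling the pieces, $\mathcal{I}(\mathcal{F})\leq kM^2-\alpha M^2-\beta M^2 b_1$. The identity $u_1-Q_1(1)=u_1\Gaussbinom{n}{1}/(q\Gaussbinom{k}{1}\Gaussbinom{n-k}{1})$ then rewrites $-\alpha M^2-\beta M^2 b_1$ as exactly $(b_1+1-\Gaussbinom{n}{1})\cdot\frac{qM^2\Gaussbinom{k}{1}\Gaussbinom{n-k}{1}}{\Gaussbinom{n}{1}(\Gaussbinom{n}{1}-1)}$, establishing (\ref{distance_and_B1'_grassmann}). The second inequality (\ref{d_lp_bound_grassmann}) follows at once from (\ref{distance_and_B1'_grassmann}) by substituting $b_1 = v/M-1-\sum_{r\geq 2}b_r$, which is (\ref{sumb}) together with $b_0=1$.

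The main obstacle is identifying the dual LP multipliers $\alpha,\beta$ and verifying they yield exactly the claimed constant $C$: this requires extracting the explicit form of $Q_1(i)$ from the dual Hahn polynomial and carrying out the algebraic cancellations that match $u_1-Q_1(1)$ against $C$. Once this calibration is pinned down, the convexity of $Q_1(\cdot)$ reduces the LP-dual inequality to a one-line elementary estimate, and the remainder is routine.
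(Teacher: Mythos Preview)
Your proof is correct and follows essentially the same route as the paper: both compute $b_1=\frac{1}{M}\sum_i Q_1(i)a_i$ from the explicit dual Hahn formula and use the elementary inequality $\Gaussbinom{i}{1}q^{-i}\le i/q$ (your $q(i-1)+q^{1-i}\ge i$ is a rearrangement of this) to turn it into a linear lower bound on $\sum_i i\,a_i$, then substitute (\ref{distance_and_A'_g}) and (\ref{sumb}). Your LP-multiplier framing makes the choice of the pivot points $i=0,1$ more transparent, but the underlying computation is identical.
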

\begin{proof}
From (\ref{relationship}) and (\ref{A_represent_B}), we know that $b_1=\frac{1}{M}\sum\limits_{i=0}^kQ_1(i)a_i$. By (\ref{q_g}) and (\ref{gen_p}) from Theorem \ref{thm5.1}, we can further obtain
\begin{flalign*}
b_1&=\frac{1}{M}\sum\limits_{i=0}^k\left(\Gaussbinom{n}{1}-1\right)\left(1-\frac{\Gaussbinom{n}{1}\Gaussbinom{i}{1}}{{\Gaussbinom{k}{1}\Gaussbinom{n-k}{1}q^i}}\right)a_i\\
&\geq\frac{\left(\Gaussbinom{n}{1}-1\right)}{M}\sum\limits_{i=0}^ka_i-\frac{\Gaussbinom{n}{1}\left(\Gaussbinom{n}{1}-1\right)}{qM\Gaussbinom{k}{1}\Gaussbinom{n-k}{1}}\sum\limits_{i=0}^kia_i\\
&=\left(\Gaussbinom{n}{1}-1\right)-\frac{\Gaussbinom{n}{1}\left(\Gaussbinom{n}{1}-1\right)}{qM\Gaussbinom{k}{1}\Gaussbinom{n-k}{1}}\left(kM-\frac{\mathcal{I}(\mathcal{F})}{M}\right),
\end{flalign*}
where the last equality follows from (\ref{sum_of_Ai'_g}) and (\ref{distance_and_A'_g}). This leads to (\ref{distance_and_B1'_grassmann}).


On the other hand, from Lemma \ref{property_B'}, we know that $b_1=\frac{\Gaussbinom{n}{k}}{M}-1-\sum_{r=2}^kb_r$.
Thus, combined with $(\ref{distance_and_B1'_grassmann})$, this implies that
\begin{flalign*}
\mathcal{I}(\mathcal{F})&\le\left(b_1+1-\Gaussbinom{n}{1}\right)\frac{qM^2\Gaussbinom{k}{1}\Gaussbinom{n-k}{1}}{\Gaussbinom{n}{1}\left(\Gaussbinom{n}{1}-1\right)}+kM^2\\
&=\left(\frac{\Gaussbinom{n}{k}}{M}-\sum\limits_{r=2}^kb_r-\Gaussbinom{n}{1}\right)\frac{qM^2\Gaussbinom{k}{1}\Gaussbinom{n-k}{1}}{\Gaussbinom{n}{1}\left(\Gaussbinom{n}{1}-1\right)}+kM^2,
\end{flalign*}
which gives (\ref{d_lp_bound_grassmann}).
\end{proof}

With the help of Theorem \ref{d_B1}, we now proceed the proof of Theorem \ref{subspace_intersection}.

\begin{proof}[Proof of Theorem \ref{subspace_intersection}] From Lemma $\ref{B'_k}$, we know that $b_j\ge 0$ for $0\leq j\leq k$. This leads to $\sum_{r=2}^kb_r\ge 0$. Thus, combined with $(\ref{d_lp_bound_grassmann})$, we have
\begin{flalign*}
\mathcal{MI}(\mathcal{F})&\leq\left(\frac{\Gaussbinom{n}{k}}{M}-\sum\limits_{r=2}^kb_r-\Gaussbinom{n}{1}\right)\frac{qM^2\Gaussbinom{k}{1}\Gaussbinom{n-k}{1}}{\Gaussbinom{n}{1}\left(\Gaussbinom{n}{1}-1\right)}+kM^2\\
&\le \left(\frac{\Gaussbinom{n}{k}}{M}-\Gaussbinom{n}{1}\right)\frac{qM^2\Gaussbinom{k}{1}\Gaussbinom{n-k}{1}}{\Gaussbinom{n}{1}\left(\Gaussbinom{n}{1}-1\right)}+kM^2.
\end{flalign*}
This proves inequality $(\ref{lower_bound_0})$.

Next, we shall use a linear programming method to give a lower bound of $\sum_{r=2}^kb_s$. 
From Lemma $\ref{B_represent_A}$, we know that for $1\leq i\leq k$,
\begin{flalign}\label{A'ige0_subspace}
\sum\limits_{r=0}^kb_rP_i(r)\ge 0.
\end{flalign}
Meanwhile, by Lemma \ref{property_B'}, we also have
$b_0=1$ and $b_1=\frac{\Gaussbinom{n}{k}}{M}-1-\sum_{r=2}^k b_r$. Thus, this leads to
\begin{flalign}
\sum\limits_{r=0}^kb_rP_i(r)&=b_0P_i(0)+b_1P_i(1)+\sum\limits_{r=2}^kb_rP_i(r)\nonumber\\
&=P_i(0)+\left(\frac{\Gaussbinom{n}{k}}{M}-1-\sum\limits_{r=2}^kb_r\right)P_i(1)+\sum\limits_{r=2}^kb_rP_i(r)\nonumber\\
&=P_i(0)+\frac{\Gaussbinom{n}{k}}{M}P_i(1)-P_i(1)+\sum\limits_{r=2}^k[P_i(r)-P_i(1)]b_r.\label{lp_subspace}
\end{flalign}
Combining $(\ref{A'ige0_subspace})$ with $(\ref{lp_subspace})$, we further have
\begin{flalign*}
\sum\limits_{r=2}^kb_r[P_i(1)-P_i(r)]\leq P_i(0)+\frac{\Gaussbinom{n}{k}}{M}P_i(1)-P_i(1),
\end{flalign*}
for $1\leq i\leq k$. To obtain a lower bound on $\sum_{r=2}^{k}b_r$ under the restrictions of the above inequality together with $b_r\geq 0$ ($2\leq r\leq k$) from Lemma \ref{B'_k}, we now consider the following LP problem:

(I) Choose real variables $y_2,\ldots,y_k$ so as to
\begin{flalign*}
\Lambda(n,k,q,M)=\text{minimize }\sum\limits_{r=2}^ky_r,
\end{flalign*}
subject to
\begin{flalign*}
\begin{cases}
y_r\ge 0,~\text{for}~r=2,3,\ldots,k;\\
\sum\limits_{r=2}^ky_r[P_i(1)-P_i(r)]\leq P_i(0)+\frac{\Gaussbinom{n}{k}}{M}P_i(1)-P_i(1),~\text{for}~i=1,2,\ldots,k.
\end{cases}
\end{flalign*}

Note that when $M\ge \Gaussbinom{n-1}{k-1}$, by $(\ref{p_ploy})$ and $(\ref{gen_p})$, we have
\begin{flalign*}
1+\frac{\Gaussbinom{n}{k}}{M}\frac{P_i(1)}{P_i(0)}-\frac{P_i(1)}{P_i(0)}&=1+\left(\frac{\Gaussbinom{n}{k}}{M}-1\right)\frac{P_i(1)}{P_i(0)}\\
&=1+\left(\frac{\Gaussbinom{n}{k}}{M}-1\right)\left(1-\frac{\Gaussbinom{n}{1}\Gaussbinom{i}{1}}{\Gaussbinom{k}{1}\Gaussbinom{n-k}{1}q^i}\right)\\
&\ge1+\left(\frac{\Gaussbinom{n}{k}}{\Gaussbinom{n-1}{k-1}}-1\right)\left(1-\frac{\Gaussbinom{n}{1}}{\Gaussbinom{n-k}{1}q^k}\right)=0.
\end{flalign*}
Moveover, since $P_i(0)=\Gaussbinom{k}{1}\Gaussbinom{n-k}{1}q^{i^2}\ge 0$, this also leads to $P_i(0)+\frac{\Gaussbinom{n}{k}}{M}P_i(1)-P_i(1)\ge 0$, for $1\leq i\leq k$. Therefore, by taking $y_2=y_3=\cdots=y_k=0$, we can obtain the optimal solution $\Lambda(n,k,q,M)=0$.

When $M\leq \Gaussbinom{n-1}{k-1}$, by (\ref{d_lp_bound_grassmann}), for $\mathcal{F}\subseteq G_q(n,k)$ with $|\mathcal{F}|=M\leq \Gaussbinom{n-1}{k-1}$, we have:
\begin{flalign}
\mathcal{MI}(\mathcal{F})&\le\left(\frac{\Gaussbinom{n}{k}}{M}-\sum\limits_{r=2}^kb_r-\Gaussbinom{n}{1}\right)\frac{qM^2\Gaussbinom{k}{1}\Gaussbinom{n-k}{1}}{\Gaussbinom{n}{1}\left(\Gaussbinom{n}{1}-1\right)}+kM^2\nonumber\\
&\le\left(\frac{\Gaussbinom{n}{k}}{M}-\Lambda(n,k,q,M)-\Gaussbinom{n}{1}\right)\frac{qM^2\Gaussbinom{k}{1}\Gaussbinom{n-k}{1}}{\Gaussbinom{n}{1}\left(\Gaussbinom{n}{1}-1\right)}+kM^2.\label{d_lp_bound_formal_space}
\end{flalign}
Consider the dual problem of (I), which is given as follows (see \cite[Section 4 of Chapter 17]{MS1977}).

(II) Choose real variables $x_1,x_2,\dots,x_k$ so as to
\begin{flalign*}
{\bar{\Lambda}}(n,k,q,M)=\text{maximize }\sum\limits_{i=1}^k\left[P_i(1)-\frac{\Gaussbinom{n}{k}}{M}P_i(1)-P_i(0)\right]x_i,
\end{flalign*}
subject to
\begin{flalign*}
\begin{cases}
x_i\ge 0,~\text{for}~i=1,2,\ldots,k;\\
\sum\limits_{i=1}^k x_i[P_i(1)-P_i(r)]\ge -1,~\text{for}~ r=2,3,\ldots,k.
\end{cases}
\end{flalign*}
%
We claim that $x_1=\cdots=x_{k-1}=0$, $x_k=[P_{k}(2)-P_{k}(1)]^{-1}$ is a feasible solution to the above $LP$ problem (II). To show this, we only need to prove that
\begin{flalign}\label{c1}
\frac{P_k(1)-P_k(r)}{P_k(2)-P_k(1)}\ge -1,
\end{flalign}
for $2\leq r\leq k$. From $(\ref{gen_p})$ and $(\ref{p_ploy})$, we know that $P_k(r)=(-1)^rq^{{r\choose 2}+k(k-r)}\Gaussbinom{n-k-r}{k-r}$. Thereofore, $P_k(2)-P_k(1)> 0$ and $(\ref{c1})$ follows from the fact that $q^{{r\choose 2}+k(k-r)}\Gaussbinom{n-k-r}{k-r}$ is decreasing on $r$ when $k\leq n/2$. With this feasible solution, we have
\begin{flalign*}
{\bar{\Lambda}}(n,k,q,M)&\ge\frac{P_k(1)-\frac{\Gaussbinom{n}{k}}{M}P_k(1)-P_k(0)}{P_k(2)-P_k(1)}\\
&=\frac{-q^{k(k-1)}\Gaussbinom{n-k-1}{k-1}+\frac{\Gaussbinom{n}{k}}{M}q^{k(k-1)}\Gaussbinom{n-k-1}{k-1}-q^{k^2}\Gaussbinom{n-k}{k}}{q^{1+k(k-2)}\Gaussbinom{n-k-2}{k-2}+q^{k(k-1)}\Gaussbinom{n-k-1}{k-1}}\\
&=\left(\frac{\Gaussbinom{n}{k}}{M}-\frac{q^n-1}{q^k-1}\right)\frac{q^{k-1}(q^{n-k-1}-1)}{q^{n-2}-1}.
\end{flalign*}
Therefore, it follows from $(\ref{d_lp_bound_formal_space})$ that
\begin{flalign*}
\mathcal{MI}(\mathcal{F})&\le\left[\frac{\Gaussbinom{n}{k}}{M}-\left(\frac{\Gaussbinom{n}{k}}{M}-\frac{q^n-1}{q^k-1}\right)\frac{q^{k-1}(q^{n-k-1}-1)}{q^{n-2}-1}-\Gaussbinom{n}{1}\right]\frac{qM^2\Gaussbinom{k}{1}\Gaussbinom{n-k}{1}}{\Gaussbinom{n}{1}\left(\Gaussbinom{n}{1}-1\right)}+kM^2\\
&=\left[\frac{\Gaussbinom{n}{k}(q^{k-1}-1)}{M(q^{n-2}-1)}-\frac{(q^n-1)(q^{n-1}-1)(q^{k-1}-1)}{(q-1)(q^k-1)(q^{n-2}-1)}\right]\frac{M^2(q^k-1)(q^{n-k}-1)}{(q^n-1)(q^{n-1}-1)}+kM^2\\
&=\left[\frac{\Gaussbinom{n}{k}}{M}-\frac{(q^n-1)(q^{n-1}-1)}{(q-1)(q^k-1)}\right]\frac{M^2(q^k-1)(q^{k-1}-1)(q^{n-k}-1)}{(q^n-1)(q^{n-1}-1)(q^{n-2}-1)}+kM^2.
\end{flalign*}
This completes the proof of $(\ref{lower_bound_space_2})$.
\end{proof}

As an immediate consequence of Theorem \ref{subspace_intersection} and Proposition \ref{counting intersection subspace}, we have the following corollaries showing that bounds in Theorem \ref{subspace_intersection} are tight for some special cases.
\begin{corollary}
Given a prime power $q$, a positive integer $n$ with $n\ge 2$, for $\mathcal{F}\subseteq G_{q}(n,2)$ with $|\mathcal{F}|=\Gaussbinom{n-1}{1}$, we have
\begin{flalign}\label{subspace_1_star_2}
\mathcal{MI}\left(\mathcal{F}\right)=\frac{(q^{n-1}+q-2)(q^{n-1}-1)}{(q-1)^2}.
\end{flalign}
\end{corollary}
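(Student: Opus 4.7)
The plan is to establish the claimed value by sandwiching $\mathcal{MI}(\mathcal{F})$ between matching lower and upper bounds.

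For the lower bound, I would produce an explicit family attaining the value. Fix any $1$-dimensional subspace $U_{0}\subseteq V$ and let $\mathcal{F}$ be the full $1$-star of all $2$-dimensional subspaces containing $U_{0}$. Proposition \ref{counting intersection subspace} applied with $t=1$, $k=2$ gives $|\mathcal{F}|=\Gaussbinom{n-1}{1}_q$, so the size constraint is met, and
\begin{equation*}
\mathcal{I}(\mathcal{F})=\left(\sum_{j=0}^{1}(j+1)\,q^{(1-j)^{2}}\Gaussbinom{n-2}{1-j}_q\Gaussbinom{1}{j}_q\right)\Gaussbinom{n-1}{1}_q.
\end{equation*}
The $j=0$ term contributes $q(q^{n-2}-1)/(q-1)$ and the $j=1$ term contributes $2$, so after placing them over the common denominator $q-1$ the inner sum becomes $(q^{n-1}+q-2)/(q-1)$, which multiplied by $\Gaussbinom{n-1}{1}_q=(q^{n-1}-1)/(q-1)$ yields exactly $(q^{n-1}+q-2)(q^{n-1}-1)/(q-1)^{2}$.

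For the matching upper bound I would invoke inequality \eqref{lower_bound_space_2} of Theorem \ref{subspace_intersection}, which applies when $n\ge 2k=4$, since $M=\Gaussbinom{n-1}{1}_q\le \Gaussbinom{n-1}{k-1}_q$ with equality. Substituting $k=2$ and $M=(q^{n-1}-1)/(q-1)$, one computes $\Gaussbinom{n}{2}_q/M=(q^{n}-1)/(q^{2}-1)$, while the second term inside the brackets, $(q^{n}-1)(q^{n-1}-1)/[(q-1)(q^{2}-1)]$, equals $\Gaussbinom{n}{2}_q$ itself; combining the two collapses the bracket to $-q(q^{n}-1)(q^{n-2}-1)/[(q^{2}-1)(q-1)]$. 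The factor $q^{n-2}-1=q^{n-k}-1$ then cancels the corresponding $q^{n-2}-1$ in the denominator of the subsequent fraction, and after arithmetic simplification the bound reduces to
\begin{equation*}
\mathcal{MI}(\mathcal{F})\le M^{2}\cdot \frac{q^{n-1}+q-2}{q^{n-1}-1}=\frac{(q^{n-1}+q-2)(q^{n-1}-1)}{(q-1)^{2}}.
\end{equation*}

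The two residual cases $n=2,3$ (where the hypothesis $n\ge 2k$ fails) must be checked by hand: for $n=2$ there is a unique $2$-subspace, and $\mathcal{I}(\mathcal{F})=2$ matches the formula; for $n=3$ any two distinct $2$-subspaces of $\mathbb{F}_{q}^{3}$ meet in a line, so every family of $q+1$ planes satisfies $\mathcal{I}(\mathcal{F})=2(q+1)+(q+1)q=(q+1)(q+2)$, again matching the formula. The only real obstacle is the bookkeeping in the simplification of \eqref{lower_bound_space_2}; nothing conceptually new is required beyond careful manipulation of the Gaussian binomial coefficients.
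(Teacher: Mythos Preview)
Your proof is correct, and the lower-bound construction via the full $1$-star together with Proposition~\ref{counting intersection subspace} is exactly what the paper does. The difference lies in the upper bound: the paper invokes inequality~\eqref{lower_bound_0} of Theorem~\ref{subspace_intersection}, which carries only the hypothesis $k\le n$ and therefore applies to all $n\ge 2$ in one stroke; substituting $k=2$ and $M=\Gaussbinom{n-1}{1}_q$ there already yields $M^{2}(q^{n-1}+q-2)/(q^{n-1}-1)$ directly. You instead use the sharper bound~\eqref{lower_bound_space_2}, which requires $n\ge 2k=4$ and forces you to dispose of $n=2,3$ by hand. Your algebra and your treatment of the small cases are fine (for $n=3$ all $(q+1)$-subfamilies of planes have the same pairwise intersection pattern, so $\mathcal{MI}$ is automatic), so nothing is wrong --- but the paper's choice of~\eqref{lower_bound_0} buys a uniform argument without any case analysis. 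Incidentally, for $k=2$ the two bounds \eqref{lower_bound_0} and \eqref{lower_bound_space_2} coincide numerically at this particular $M$ (both reduce to $2M^{2}-qM^{2}(q^{n-2}-1)/(q^{n-1}-1)$), so there is no actual gain in using the second one here.
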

\begin{proof}
By inequality ($\ref{lower_bound_0}$), we already have
\begin{flalign}
\mathcal{MI}\left(\mathcal{F}\right)\leq\frac{(q^{n-1}+q-2)(q^{n-1}-1)}{(q-1)^2}.\nonumber
\end{flalign}
To show that this upper bound is tight, we take $\mathcal{Y}_1\subseteq G_q(n,2)$ as the family of all $2$-dim subspaces containing some fixed $1$-dim subspace. Clearly, $|\mathcal{Y}_1|=\Gaussbinom{n-1}{1}$. By Proposition \ref{counting intersection subspace}, we have
\begin{flalign}
\mathcal{I}(\mathcal{Y}_1)&=\Gaussbinom{n-1}{1}\sum\limits_{j=0}^1 (j+1)q^{(1-j)^2}\Gaussbinom{n-2}{1-j}\Gaussbinom{1}{j}\nonumber\\
&=\Gaussbinom{n-1}{1}\frac{q^{n-1}+q-2}{q-1}.\nonumber
\end{flalign}
Hence, $(\ref{subspace_1_star_2})$ follows from $\mathcal{MI}(\mathcal{F})\geq \mathcal{I}(\mathcal{Y}_1)$.
\end{proof}

\begin{corollary}
Given a prime power $q$, a positive integer $n$ with $n\ge 6$, for $\mathcal{F}\subseteq G_{q}(n,3)$ with $|\mathcal{F}|=\Gaussbinom{n-2}{1}$, we have
\begin{flalign}\label{subspace_2_star_3}
\mathcal{MI}\left(\mathcal{F}\right)=\frac{(2q^{n-2}+q-3)(q^{n-2}-1)}{(q-1)^2}.
\end{flalign}
\end{corollary}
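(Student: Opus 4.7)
The plan is to mimic the strategy of the preceding corollary: establish the equality by matching the total intersection number of an explicit extremal family against the upper bound supplied by Theorem \ref{subspace_intersection}.

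For the lower bound, I would take $\mathcal{Y}_2\subseteq G_q(n,3)$ to be the family of all $3$-dim subspaces of $\mathbb{F}_q^n$ containing a fixed $2$-dim subspace. Proposition \ref{counting intersection subspace} with $t=2$ and $k=3$ gives $|\mathcal{Y}_2|=\Gaussbinom{n-2}{1}$ and
\[
\mathcal{I}(\mathcal{Y}_2)=\Gaussbinom{n-2}{1}\sum_{j=0}^{1}(j+2)\,q^{(1-j)^2}\Gaussbinom{n-3}{1-j}\Gaussbinom{1}{j}=\Gaussbinom{n-2}{1}\left(2q\Gaussbinom{n-3}{1}+3\right),
\]
which after clearing denominators simplifies to $\frac{(2q^{n-2}+q-3)(q^{n-2}-1)}{(q-1)^2}$. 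Hence $\mathcal{MI}(\mathcal{F})$ is at least this value.

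For the matching upper bound, since $n\geq 6=2k$ and $M=\Gaussbinom{n-2}{1}\leq\Gaussbinom{n-1}{2}$, inequality (\ref{lower_bound_space_2}) of Theorem \ref{subspace_intersection} applies. I would substitute $k=3$ and $M=\Gaussbinom{n-2}{1}$ into that bound: first computing $\frac{\Gaussbinom{n}{3}}{M}=\frac{(q^n-1)(q^{n-1}-1)}{(q^3-1)(q^2-1)}$, then reducing the bracketed term $\frac{\Gaussbinom{n}{3}}{M}-\frac{(q^n-1)(q^{n-1}-1)}{(q-1)(q^3-1)}$ to a single fraction, multiplying by the prefactor $\frac{M^2(q^3-1)(q^2-1)(q^{n-3}-1)}{(q^n-1)(q^{n-1}-1)(q^{n-2}-1)}$, and finally adding $kM^2=3M^2$. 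A straightforward cancellation then collapses the expression to $M^2\cdot\frac{2q^{n-2}+q-3}{q^{n-2}-1}$, which with $M=\frac{q^{n-2}-1}{q-1}$ is precisely $\frac{(2q^{n-2}+q-3)(q^{n-2}-1)}{(q-1)^2}$, matching the lower bound and yielding (\ref{subspace_2_star_3}).

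The main obstacle is arithmetic rather than conceptual: because our value of $M=\Gaussbinom{n-2}{1}$ exceeds the threshold $\frac{q^{n-2}-1}{q^2-1}$ at which the bracket in (\ref{lower_bound_space_2}) vanishes, this bracket turns out to be \emph{negative}, so one has to track signs carefully through the simplification and verify that combining the resulting negative contribution with the additive $kM^2$ term recovers exactly the stated closed form. No machinery beyond Proposition \ref{counting intersection subspace} and inequality (\ref{lower_bound_space_2}) is required.
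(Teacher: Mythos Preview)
Your proposal is correct and follows essentially the same approach as the paper: the lower bound via the explicit full $2$-star $\mathcal{Y}_2$ computed with Proposition \ref{counting intersection subspace}, and the matching upper bound via inequality (\ref{lower_bound_space_2}) of Theorem \ref{subspace_intersection}. The paper's own proof is terser---it simply asserts that (\ref{lower_bound_space_2}) yields the stated upper bound without carrying out the simplification you describe---but the ingredients and logic are identical.
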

\begin{proof}
Similarly, by $(\ref{lower_bound_space_2})$, we have
\begin{flalign}
\mathcal{MI}\left(\mathcal{F}\right)\leq\frac{(2q^{n-2}+q-3)(q^{n-2}-1)}{(q-1)^2}.\nonumber
\end{flalign}
Now, take $\mathcal{Y}_2\subseteq G_q(n,3)$ as the family of all $3$-dim subspaces containing some fixed $2$-dim subspace. Clearly, $|\mathcal{Y}_2|=\Gaussbinom{n-2}{1}$. By Proposition \ref{counting intersection subspace}, we have
\begin{flalign}
\mathcal{I}(\mathcal{Y}_2)&=\Gaussbinom{n-2}{1}\sum\limits_{j=0}^1 (j+2)q^{(1-j)^2}\Gaussbinom{n-3}{1-j}\Gaussbinom{1}{j}\nonumber\\
&=\frac{(2q^{n-2}+q-3)(q^{n-2}-1)}{(q-1)^2}.\nonumber
\end{flalign}
Hence, $(\ref{subspace_2_star_3})$ follows from $\mathcal{MI}(\mathcal{F})\geq \mathcal{I}(\mathcal{Y}_2)$.
\end{proof}

\subsection{The conjugacy scheme of symmetric group}\label{permutation}


Given a positive integer $n$, we take $X$ as the symmetric group $S_n$. Denote $C_0,C_1,\ldots,C_s$ as the conjugacy classes of $S_n$ and the relations $\mathcal{R}=\{R_0,\ldots,R_s\}$ are defined as follows:
\begin{flalign*}
R_i=\{(g,h)\in S_n\times S_n|~gh^{-1}\in C_i\}.
\end{flalign*}
$(S_n,\mathcal{R})$ is called $the~conjugacy~scheme$ of $S_n$.
For each element $\sigma$ of $S_n$, one can write
\begin{flalign*}
\sigma=(a_1\ldots a_{k_1})(b_1\ldots b_{k_2})\ldots(c_1\ldots c_{k_m}),
\end{flalign*}
as a product of disjoint cycles with $k_1\ge k_2\ge\ldots \geq k_m\ge 1$. This $m$-tuple $(k_1,\ldots,k_m)$ is called the cycle-shape of $\sigma$. 
Then, the conjugacy classes of $S_n$ are precisely
\begin{flalign*}
\{\sigma\in S_n:~\text{cycle-shape}(\sigma)=\lambda\}_{\lambda\vdash n}.
\end{flalign*}
Clearly, each conjugacy class $\{C_i:~0\leq i\leq s\}$ corresponds to a cycle-shape $\sigma_i$ of $S_n$ respectively. In particular, $C_0$ corresponds to the cycle-shape $(1,1,\ldots,1)$. According to \cite[Chapter 11.12]{GC2016}, eigenvalues and dual eigenvalues of the conjugacy scheme of $S_n$ are given by
\begin{flalign}\label{conjugucypq}
P_{i}(j)=\frac{|C_i|\overline{\psi_j(c_i)}}{\psi_j(e_0)},~~Q_{j}(i)=\psi_j(c_i)\psi_j(e_0),
\end{flalign}
where $c_i\in C_i$ for $0\leq i\leq s$, $e_0$ is the identity element in $S_n$ and $\psi_j$ $(0\leq j\leq s)$ denote irreducible characters of $S_n$. Especially, $\psi_0$ denotes the trivial character, which maps all the elements of $G$ into $1$.

Given $\mathcal{F}\subseteq S_n$ with size $M$, consider the inner distribution of $\mathcal{F}$ with respect to $\mathcal{R}$. According to the definition of $a_i$, $(\ref{inner_distribution})$ can be rewritten as
\begin{flalign*}
a_i=\frac{1}{M}|\{(x,y):~x,y\in \mathcal{F},~xy^{-1}\in C_i\}|.
\end{flalign*}
%
Thus, one can easily obtain
\begin{flalign}\label{sum_of_Ai'_con}
a_{0}=1~\text{and}~\sum\limits_{i=0}^s a_i=M.
\end{flalign}
Given a cycle-shape $\sigma=(k_1,\ldots,k_m)$, define $U_\sigma=|\{i\in[m]:k_i=1\}|$. From the new expression of $a_i$ above, we have
\begin{flalign}\label{distance_and_A'_con}
\mathcal{I}(\mathcal{F})=M\sum\limits_{i=1}^s U_{\sigma}a_{i}.
\end{flalign}
%

Now, according to the relationship between inner distribution $a_i$s and dual distribution $b_i$s, we have the following theorem.
\begin{theorem}\label{LP_permutation}
Given positive integers $n$ and $M$ with $M\leq n!$. Let $\mathcal{F}\subseteq S_n$ with $|\mathcal{F}|=M$ and $\{b_0,\ldots,b_s\}$ be the dual distribution of $\mathcal{F}$. Then, we have
\begin{flalign}
\mathcal{I}(\mathcal{F})&=M^2\left(\frac{b_1}{n-1}+1\right),\label{distance_and_B1'}\\
\mathcal{I}(\mathcal{F})&=\frac{M^2}{n-1}\left(\frac{n!}{M}+n-2-\sum\limits_{r=2}^sb_r\right).\label{d_lp_bound}
\end{flalign}
\end{theorem}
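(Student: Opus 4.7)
The plan is to compute the single coefficient $b_1$ of the dual distribution in two different ways and derive both identities as consequences. The key algebraic input is that the irreducible character of $S_n$ indexed by the partition $(n-1,1)$ is the standard character $\chi(\sigma)=\mathrm{fix}(\sigma)-1$; hence, for a representative $c_i$ of the conjugacy class $C_i$ with cycle-shape $\sigma_i$, one has $\psi_1(e_0)=n-1$ and $\psi_1(c_i)=U_{\sigma_i}-1$.

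First, inverting (\ref{A_represent_B}) via (\ref{relationship}) and (\ref{another_form}) yields the general formula
\begin{equation*}
b_j=\frac{1}{M}\sum_{i=0}^{s}Q_j(i)\,a_i.
\end{equation*}
Specialising to $j=1$ and plugging in $Q_1(i)=\psi_1(c_i)\psi_1(e_0)=(n-1)(U_{\sigma_i}-1)$ gives
\begin{equation*}
b_1=\frac{n-1}{M}\Bigl(\sum_{i} U_{\sigma_i}a_i-\sum_{i} a_i\Bigr)=\frac{n-1}{M}\left(\frac{\mathcal{I}(\mathcal{F})}{M}-M\right),
\end{equation*}
where I use (\ref{sum_of_Ai'_con}) and (\ref{distance_and_A'_con}). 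Solving for $\mathcal{I}(\mathcal{F})$ yields the first identity (\ref{distance_and_B1'}).

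For the second identity, I would invoke Lemma~\ref{property_B'}, which asserts $b_0=1$ and $\sum_{j=0}^{s}b_j=n!/M$, so $b_1=n!/M-1-\sum_{r=2}^{s}b_r$. Plugging this expression for $b_1$ into (\ref{distance_and_B1'}) and simplifying yields (\ref{d_lp_bound}).

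There is essentially no obstacle here: both identities follow from a direct unfolding of the definitions of inner and dual distributions, the explicit formula for the standard character of $S_n$, and the summation identity from Lemma~\ref{property_B'}. The only point that requires a touch of care is bookkeeping the contribution of the identity conjugacy class ($i=0$) to each of the sums, since the identity permutation has $n$ fixed points and $\psi_1(e_0)\neq 0$; once this is handled, the two identities drop out in one or two lines of algebra each.
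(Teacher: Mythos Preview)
Your proposal is correct and follows essentially the same route as the paper: compute $b_1$ via $b_1=\frac{1}{M}\sum_i Q_1(i)a_i$ using the standard character $\psi_1(c)=U_{\sigma(c)}-1$, then invoke Lemma~\ref{property_B'} to rewrite $b_1$ and obtain the second identity. The only cosmetic difference is that you explicitly flag the $i=0$ bookkeeping, whereas the paper absorbs it silently into (\ref{distance_and_A'_con}).
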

\begin{proof}
According to \cite[Lemma 6.9]{CG1978}, there exists an irreducible character $\psi$ of $S_n$ which is defined as: $\psi(c)=U_{\sigma(c)}-1$ for $c\in S_n$, where $\sigma(c)$ is the cycle-shape of $c$. W.l.o.g, we can assume that $\psi_1=\psi$. By (\ref{relationship}) and (\ref{A_represent_B}), we know that $b_1=\frac{1}{M}\sum\limits_{i=0}^sQ_1(i)a_i$. Then, by (\ref{conjugucypq}), we further have
\begin{flalign*}
b_1&=\frac{1}{M}\sum\limits_{i=0}^s\psi_1(c_i)\psi_1(e_0)a_i\\
&=\frac{1}{M}\sum\limits_{i=0}^s(U_{\sigma(c_i)}-1)(n-1)a_i.
\end{flalign*}
Combined with (\ref{sum_of_Ai'_con}) and (\ref{distance_and_A'_con}), this leads to
\begin{flalign*}
b_1&=\frac{1}{M}\left(\sum\limits_{i=0}^s(n-1)a_i U_{\sigma(c_i)}-\sum\limits_{i=0}^s(n-1)a_i\right)\\
&=\frac{1}{M^2}(n-1)\mathcal{I}(\mathcal{F})-(n-1),
\end{flalign*}
Therefore, we have (\ref{distance_and_B1'}).

On the other hand, by Lemma \ref{property_B'}, we have $b_1=\frac{n!}{M}-1-\sum_{r=2}^sb_r$.
Thus, combined with (\ref{distance_and_B1'}), this implies that
\begin{flalign*}
\mathcal{I}(\mathcal{F})&=M^2\left(\frac{b_1}{n-1}+1\right)=\frac{M^2}{n-1}\left(\frac{n!}{M}+n-2-\sum\limits_{r=2}^sb_r\right).
\end{flalign*}
\end{proof}
\begin{proof}[Proof of Theorem \ref{lpbound}]\par
From Lemma $\ref{B'_k}$, $b_j\ge 0$ for $0\leq j\leq s$. This leads to $\sum_{r=2}^sb_r\ge 0$. Thus, combined with $(\ref{d_lp_bound})$, we have
\begin{flalign*}
\mathcal{MI}(\mathcal{F})&=\frac{M^2}{n-1}\left(\frac{n!}{M}+n-2-\sum\limits_{r=2}^sb_r\right)\\
&\le \frac{M^2}{n-1}\left(\frac{n!}{M}+n-2\right).
\end{flalign*}
\end{proof}
\begin{Remark}
Actually, similar to the proof of Theorem \ref{subspace_intersection}, we can also use the linear programming approach to bound $\sum_{r=2}^sb_r$. For interested readers, the corresponding LP problem is formulated as follows:

(I) Choose real variables $y_2,\ldots,y_k$ so as to
\begin{flalign*}
\Lambda(n,M)=\text{minimize }\sum\limits_{r=2}^sy_r,
\end{flalign*}
subject to
\begin{flalign*}
\begin{cases}
y_r\ge 0,~\text{for}~r=2,3,\ldots,s;\\
\sum\limits_{r=2}^ky_r[P_i(1)-P_i(r)]\leq P_i(0)+\frac{n!}{M}P_i(1)-P_i(1),~\text{for}~i=1,2,\ldots,s.
\end{cases}
\end{flalign*}
Note that when $M\ge (n-1)!$, the optimal solution $\Lambda(n,M)=0$ is given by taking $y_2=y_3=\cdots=y_s=0$. When $M\le (n-1)!$, we turn to the following the dual problem of (I).

(II) Choose real variables $x_1,x_2,\dots,x_s$ so as to
\begin{flalign*}
\overline{\Lambda}(n,M)=\text{maximize }\sum\limits_{i=1}^s\left[P_i(1)-\frac{n!}{M}-P_i(0)\right]x_i,
\end{flalign*}
subject to
\begin{flalign*}
\begin{cases}
x_i\ge 0,~\text{for}~i=1,2,\ldots,s;\\
\sum\limits_{i=1}^s x_i[P_i(1)-P_i(r)]\ge -1,~\text{for}~r=2,3,\ldots,s.
\end{cases}
\end{flalign*}

Unfortunately, the feasible solution we find is $x_1=\ldots=x_s=0$, which leads to the same lower bound $\sum_{r=2}^sb_r\geq 0$ as Lemma \ref{B'_k}. Possibly, one can find other more proper feasible solutions to improve Theorem \ref{lpbound}.
\end{Remark}

As an immediate consequence of Theorem \ref{lpbound}, we have the following corollary.
\begin{corollary}
Given a positive integer $n\ge 2$, let $\mathcal{F}\subseteq S_n$ with $|\mathcal{F}|=(n-1)!$, we have
\begin{flalign}\label{d_n-1}
\mathcal{MI}\left(\mathcal{F}\right)=2\left((n-1)!\right)^2.
\end{flalign}
\end{corollary}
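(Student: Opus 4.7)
The plan is to prove the corollary in two steps: establish the upper bound $\mathcal{MI}(\mathcal{F}) \leq 2((n-1)!)^2$ by directly substituting $M = (n-1)!$ into Theorem \ref{lpbound}, and then exhibit a family of size $(n-1)!$ achieving equality.

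For the upper bound, I would take $M = (n-1)!$ in the inequality of Theorem \ref{lpbound} and simplify:
\begin{equation*}
\mathcal{MI}(\mathcal{F}) \leq \frac{((n-1)!)^2}{n-1}\left(\frac{n!}{(n-1)!} + n-2\right) = \frac{((n-1)!)^2}{n-1}\bigl(n + (n-2)\bigr) = 2((n-1)!)^2.
\end{equation*}
This is essentially a one-line computation using the identity $\frac{n!}{(n-1)!} = n$.

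For the lower bound, the natural candidate is a $1$-coset, e.g.\ $\mathcal{F}_0 = \mathcal{C}_{1 \to 1} = \{\sigma \in S_n : \sigma(1) = 1\}$, which has size $(n-1)!$. I would compute $\mathcal{I}(\mathcal{F}_0)$ via the identity
\begin{equation*}
\mathcal{I}(\mathcal{F}_0) = \sum_{i,j \in [n]} |(\mathcal{F}_0)_{i \to j}|^2,
\end{equation*}
already used in Section~4. Since every $\sigma \in \mathcal{F}_0$ satisfies $\sigma(1)=1$, the $i=1$ contribution is $((n-1)!)^2$. For each fixed $i \in [2,n]$ and $j \in [2,n]$, the size $|(\mathcal{F}_0)_{i \to j}| = (n-2)!$, so the $i$-th slice contributes $(n-1)\cdot((n-2)!)^2 = (n-2)!\,(n-1)! = \frac{((n-1)!)^2}{n-1}$, and summing over the $(n-1)$ values of $i \in [2,n]$ yields a second $((n-1)!)^2$. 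Hence $\mathcal{I}(\mathcal{F}_0) = 2((n-1)!)^2$, matching the upper bound.

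There is no real obstacle here: the corollary is just a tightness check for Theorem \ref{lpbound} at $M=(n-1)!$, and the $1$-coset is the obvious extremizer (consistent with the fact that $1$-cosets are the unique maximum intersecting families of permutations by the Deza--Frankl/Cameron--Ku theorem mentioned in the introduction). The only thing to double-check is that the slice-counting bookkeeping above is done correctly so that the two halves of the sum exactly add to $2((n-1)!)^2$, which they do.
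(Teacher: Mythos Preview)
Your proposal is correct and follows essentially the same approach as the paper: the upper bound comes from substituting $M=(n-1)!$ into Theorem~\ref{lpbound}, and the lower bound comes from the $1$-coset $\mathcal{Y}=\{\sigma\in S_n:\sigma(1)=1\}$. The paper simply asserts $\mathcal{I}(\mathcal{Y})=2((n-1)!)^2$ without showing the slice-counting computation you wrote out, but the argument is identical.
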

\begin{proof}
By Theorem $\ref{lpbound}$, we have
\begin{flalign*}
\mathcal{MI}\left(\mathcal{F}\right)\leq 2\left((n-1)!\right)^2.
\end{flalign*}
On the other hand, by taking $\mathcal{Y}=\{y\in S_n: y(1)=1\}\subseteq S_n$, we have $\mathcal{I}(\mathcal{Y})=2\left((n-1)!\right)^2$. Therefore, (\ref{d_n-1}) follows from $\mathcal{MI}(\mathcal{F})\geq \mathcal{I}(\mathcal{Y})$.
\end{proof}

\section{Concluding remarks}

In this paper, we consider a new type of intersection problems which can be viewed as inverse problems of Erd\H{o}s-Ko-Rado type theorems for vector spaces and permutations. This type of problems concerns extremal structures of the families of subspaces or permutations with maximal total intersection numbers. Through different methods, we obtain structural characterizations of the optimal family of subspaces and the optimal family of permutations satisfying $\mathcal{I}(\mathcal{F})=\mathcal{MI}(\mathcal{F})$. To some extent, these results determine the unique structure of the optimal families for certain values of $|\mathcal{F}|$ and characterize the relation between maximizing $\mathcal{I}(\mathcal{F})$ and being intersecting, which partially answers Question \ref{question1}. Moreover, using linear programming methods, we have also shown several upper bounds on $\mathcal{MI}(\mathcal{F})$. These bounds may provide a reference for the study of structures of optimal families.

However, our results have the following limits:
\begin{itemize}
  \item Take $\varepsilon_0=\frac{1}{96t\ln{q}(k+1)}$, Theorem \ref{stabilityforsub} shows that for $n$ large enough and $\delta\leq 1+\varepsilon_0$ not too close to $0$, the optimal family with maximal total intersection number is either contained in a full $t$-star or containing a full $t$-star. When $|\mathcal{F}|>(1+\varepsilon_0)\Gaussbinom{n-t}{k-t}$, the quantitative shifting arguments in the proof of Theorem \ref{stabilityforsub} no longer work. So, can we obtain similar structural results for families with size larger than $(1+\varepsilon_0)\Gaussbinom{n-t}{k-t}$? Note that the intersection problem of vector spaces often requires tools from linear algebra or exterior algebra, maybe ideas from these areas can help us to tackle this problem.
  \item For families of permutations, we consider the case for $|\mathcal{F}|=\Theta((n-1)!)$. Nevertheless, for $|\mathcal{F}|=\Theta((n-t)!)$ ($t\geq 2$), nothing is known yet. It is worth noting that, in \cite{EFF2017}, the authors provide a stability result for families of permutations with size $\Theta((n-t)!)$ similar to Theorem \ref{stabilityforper}. Thus, it is natural to wonder if we can extend the result of Theorem \ref{theremovallemma} to families with size $|\mathcal{F}|=\Theta((n-t)!)$ using this stability result. Sadly, this requires more information about spectra of $\Gamma_k$, which is beyond our capability.

      ~~~~Moreover, when $\varepsilon$ becomes relatively large, the result of Theorem \ref{theremovallemma} seems to be trivial. Thus, for this case, more specific structural characterizations for families of permutations are also worth studying.
  \item Due to the choice of feasible solutions of corresponding LP problems, our upper bounds on $\mathcal{MI}(\mathcal{F})$ are no longer tight for families of subspaces with size $\Theta(\Gaussbinom{n-t}{k-t})$ or families of permutations with size $\Theta((n-t)!)$, for $t\geq 2$. Therefore, one can try to find other more proper feasible solutions to improve these upper bounds.
\end{itemize}

\subsection*{Acknowledgements}

The authors express their gratitude to the two anonymous reviewers for their detailed and constructive comments which are
very helpful to the improvement of the presentation of this paper, especially for providing a simpler proof of Proposition \ref{ob1} using the method of generating functions. And the authors also express their gratitude to the associate editor for his excellent editorial job.

\bibliographystyle{abbrv}
\bibliography{REF}
\end{document}